\def\date{15 Sep 2014}
\newtheorem{proposition}{Proposition}[section]
\newtheorem{lemma}[proposition]{Lemma}
\newtheorem{corollary}[proposition]{Corollary}
\newtheorem{theorem}[proposition]{Theorem}
\newtheorem{claim}[proposition]{Claim}
\newtheorem{question}[proposition]{Question}
\newtheorem{lem}[proposition]{Lemma}
\newtheorem{cor}[proposition]{Corollary}
\newtheorem{thm}[proposition]{Theorem}
\theoremstyle{definition}
\newtheorem{definition}[proposition]{Definition}
\newcommand{\mcal}{\mathcal}
\newcommand{\C}{\mcal{C}}
\renewcommand{\bar}{\overline}
\begin{document}
\font\smallrm=cmr8




\baselineskip=12pt
\phantom{a}\vskip .25in
\centerline{{\bf Characterizing $4$-critical graphs with Ore-degree at most Seven}}
\vskip.4in
\centerline{{\bf Luke Postle}
\footnote{\texttt{lpostle@uwaterloo.ca. Partially supported by NSERC under Discovery Grant No. 2014-06162.}}} 
\smallskip
\centerline{Department of Combinatorics and Optimization}
\smallskip
\centerline{University of Waterloo}
\smallskip
\centerline{Waterloo, Ontario, Canada}

\vskip 1in \centerline{\bf ABSTRACT}
\bigskip

{
\parshape=1.0truein 5.5truein
\noindent

Dirac introduced the notion of a \emph{$k$-critical} graph, a graph that is not $(k-1)$-colorable but whose every proper subgraph is $(k-1)$-colorable. Brook's Theorem states that every graph with maximum degree $k$ is $k$-colorable unless it contains a subgraph isomorphic to $K_{k+1}$ or an odd cycle (for $k=2$). Equivalently, for all $k\ge 4$, the only $k$-critical graph of maximum degree $k-1$ is $K_k$. A natural generalization of Brook's theorem is to consider the \emph{Ore-degree} of a graph, which is the maximum of $d(u)+d(v)$ over all $uv\in E(G)$. Kierstead and Kostochka proved that for all $k\ge 6$ the only $k$-critical graph with Ore-degree at most $2k-1$ is $K_k$. Kostochka, Rabern and Steibitz proved that the only $5$-critical graphs with Ore-degree at most $9$ are $K_5$ and a graph they called $O_5$. 

A different generalization of Brook's theorem, motivated by Hajos' construction, is Gallai's conjectured bound on the minimum density of a $k$-critical graph. Recently, Kostochka and Yancey proved Gallai's conjecture. Their proof for $k\ge 5$ implies the above results on Ore-degree. However, the case for $k=4$ remains open, which is the subject of this paper.

Kostochka and Yancey's short but beautiful proof for the case $k=4$ says that if $G$ is a $4$-critical graph, then $|E(G)|\ge (5|V(G)|-2)/3$. We prove the following bound which is better when there exists a large independent set of degree three vertices: if $G$ is a $4$-critical graph $G$, then $|E(G)|\ge 1.6 |V(G)| + .2 \alpha(D_3(G)) - .6$, where $D_3(G)$ is the graph induced by the degree three vertices of $G$. As a corollary, we characterize the $4$-critical graphs with Ore-degree at most seven as precisely the graphs of Ore-degree seven in the family of graphs obtained from $K_4$ and Ore compositions.
}

\vfill \baselineskip 11pt \noindent \date.
\vfil\eject
\baselineskip 18pt

\section{Introduction}
All graphs considered in this paper are simple and finite. Graph coloring is an important area of study in graph theory.

\subsection{Ore-Degree}

We know that the chromatic number of a graph is at most the the maximum degree plus one. It is natural to ask for what graphs does equality hold. Since chromatic number and maximum degree are both monotone properties, it suffices to consider the minimal non-colorable graphs.

We say a graph is \emph{$k$-critical} if $G$ is not $(k-1)$-colorable but every proper subgraph is.

Brooks~\cite{Brooks} proved the following theorem characterizing the $k$-critical graphs with maximum degree $k-1$:

\begin{theorem}[Brooks]
For all $k\ge 4$, the only $k$-critical graph with maximum degree $k-1$ is $K_k$. In addition, the only $3$-critical graphs with maximum degree two are odd cycles.
\end{theorem}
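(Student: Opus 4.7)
The plan is to prove Brooks' theorem directly, since it is a classical structural result. First I would reduce to the regular case: any $k$-critical graph $G$ is connected with $\delta(G) \geq k-1$ (otherwise a vertex $v$ of degree at most $k-2$ could be greedily extended from a $(k-1)$-coloring of $G-v$), so combined with the hypothesis $\Delta(G) \leq k-1$ we conclude $G$ is $(k-1)$-regular. The case $k=3$ is then immediate: a connected $2$-regular graph is a cycle, and odd cycles are precisely the non-bipartite cycles, hence the $3$-critical ones.

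For $k \geq 4$, I would argue by contradiction, assuming $G$ is connected and $(k-1)$-regular with $G \neq K_k$, and constructing a proper $(k-1)$-coloring. The key tool is the following greedy lemma: if there exist vertices $u, v, w$ with $uw, vw \in E(G)$, $uv \notin E(G)$, and $G - \{u,v\}$ connected, then $G$ is $(k-1)$-colorable. Indeed, taking a spanning tree $T$ of $G - \{u,v\}$ rooted at $w$ and ordering $V(G)$ as $v_1 = u,\ v_2 = v,\ v_3, \ldots, v_{n-1}, v_n = w$ by decreasing BFS-distance from $w$, every $v_i$ with $3 \leq i \leq n-1$ has a parent later in the ordering. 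So if we color $u$ and $v$ the same color and then process $v_3, \ldots, v_{n-1}$ in order, each such $v_i$ sees at most $k-2$ previously colored neighbors and may be extended; finally $w$ has $k-1$ neighbors that altogether use at most $k-2$ colors (since $u$ and $v$ repeat), leaving an available color.

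What remains is to produce the triple $(u,v,w)$, and this I would split by connectivity. If $G$ has a cut-vertex, I would induct on $|V(G)|$: color each block of $G$ with $k-1$ colors (each block has max degree $\leq k-1$ and is strictly smaller, so by induction is $(k-1)$-colorable unless it is $K_k$, which would force $G = K_k$), then glue the colorings together by permuting color classes so they agree at each cut-vertex. If $G$ is $3$-connected and not complete, then some vertex $w$ has two non-adjacent neighbors $u,v$ (since no vertex dominates $G$), and $G - \{u,v\}$ is connected by $3$-connectivity.

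The main obstacle is the intermediate case in which $G$ is $2$-connected but not $3$-connected. Here one fixes a $2$-vertex-cut $\{x,y\}$ and uses $(k-1)$-regularity together with $k \geq 4$ to analyze the components of $G - \{x,y\}$: either $x$ and $y$ are adjacent, in which case one component $C$ has a vertex $w$ with two neighbors in $C$ that can serve as $u,v$ (after checking $G-\{u,v\}$ remains connected via the other component through $\{x,y\}$); or $x$ and $y$ are non-adjacent and themselves serve as the pair $u,v$, with $w$ any common neighbor. Verifying connectivity of $G-\{u,v\}$ in each subcase is the delicate step and where I expect the argument to require the most care.
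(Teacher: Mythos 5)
The paper does not prove this statement; it is quoted as Brooks' classical theorem with a citation to \cite{Brooks}, so there is no internal proof to compare against. Your proposal is the standard Lov\'asz-style argument (reduce to the $(k-1)$-regular case, then greedily color along a spanning-tree ordering ending at a vertex $w$ with two non-adjacent, identically colored neighbors $u,v$), and the reduction, the $k=3$ case, the greedy lemma, and the $3$-connected case are all correct as sketched. The block/cut-vertex case also works, though it is cleaner to invoke the standard fact that critical graphs are $2$-connected, or to phrase the induction hypothesis as the colorability form of Brooks' theorem rather than in terms of criticality.

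There is, however, a genuine gap in the case where $G$ is $2$-connected but not $3$-connected. In the subcase where the $2$-cut $\{x,y\}$ is a non-adjacent pair, you propose to take $u=x$, $v=y$. But your greedy lemma requires $G-\{u,v\}$ to be connected, and $G-\{x,y\}$ is disconnected precisely because $\{x,y\}$ is a cut; moreover $x$ and $y$ need not have a common neighbor at all. The adjacent subcase is similarly underspecified: you must exhibit a non-adjacent pair $u,v$ with a common neighbor $w$ such that $G-\{u,v\}$ stays connected, and ``a vertex $w$ with two neighbors in $C$'' does not yet guarantee either non-adjacency or connectivity. The standard repair is different: choose a vertex $v$ belonging to some $2$-cut, so that $G-v$ has a cut vertex; take two end-blocks $B_1,B_2$ of $G-v$ and neighbors $u_i\in B_i$ of $v$ that are not cut vertices of $G-v$ (these exist by $2$-connectivity of $G$). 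Then $u_1u_2\notin E(G)$ since they lie in distinct end-blocks, $v$ plays the role of $w$, and $G-\{u_1,u_2\}$ is connected because each $B_i-u_i$ remains attached to the rest of $G-v$ and $v$ retains a neighbor since $d(v)=k-1\ge 3$. With that substitution the proof closes; as written, the non-adjacent-cut subcase would fail.
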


An interesting question is to ask if whether Brooks' theorem can be improved. One manner in which to ask this question is to consider the maximum degree of edges instead of vertices. To that end, we define the \emph{Ore-degree} of a graph as the maximum of $d(u)+d(v)$ for every edge $uv\in E(G)$. Brook's Theorem says that for all $k\ge 4$ if $G$ is a $k$-critical graph with Ore-degree at most $2k-2$, then $G$ is isomorphic to $K_k$. Kostochka and Kierstead~\cite{KK} extended Brooks' theorem to graphs with Ore-degree at most $2k-1$ for all $k\ge 6$:

\begin{thm}[Kostochka and Kierstead]\label{OreD5}
For all $k\ge 6$, the only $k$-critical graph with Ore-degree at most $2k-1$ is $K_k$.
\end{thm}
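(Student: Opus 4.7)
The plan is to argue by contradiction using the Kostochka--Yancey density bound for $k$-critical graphs, which the paper notes already implies Theorem~\ref{OreD5} for $k \ge 5$. Suppose $G$ is a $k$-critical graph distinct from $K_k$ with $k \ge 6$ and Ore-degree at most $2k-1$, and set $n = |V(G)|$. Since criticality forces minimum degree at least $k-1$, I would partition $V(G) = L \sqcup H$, where $L$ is the set of vertices of degree exactly $k-1$ and $H$ is the set of vertices of degree at least $k$.

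The first step is to extract the structure forced by the Ore-degree bound. If two vertices of $H$ were adjacent, their degree sum would be at least $2k$, contradicting the hypothesis; hence $H$ is an independent set. For $v \in H$ and any neighbor $u$, the bound $d(u) \le 2k-1-d(v) \le k-1$ forces $u \in L$ with $d(u) = k-1$, and feeding this back gives $d(v) = k$ exactly. Thus $H$ is independent, every vertex of $H$ has degree exactly $k$, and all of its neighbors lie in $L$ with degree exactly $k-1$.

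Next I would combine this with the Kostochka--Yancey bound $|E(G)| \ge \frac{(k+1)(k-2)n - k(k-3)}{2(k-1)}$ and the explicit degree sum $2|E(G)| = (k-1)|L| + k|H| = (k-1)n + |H|$ to derive the lower bound $|H| \ge \frac{(k-3)(n-k)}{k-1}$. On the other hand, since $e(L,L) = \frac{(k-1)|L| - k|H|}{2}$ must be non-negative, I get the upper bound $|H| \le \frac{(k-1)n}{2k-1}$. A short calculation shows that for $k \ge 6$ these two bounds are incompatible whenever $n$ exceeds a threshold of size $O(k)$, yielding the desired contradiction in this regime.

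The hard part will be eliminating the remaining small values of $n$, where the plain density bound is too weak to conclude. To handle them I would invoke the sharper ``potential'' form of Kostochka--Yancey's result, which essentially forces any $k$-critical graph nearly attaining the density bound to be built up from copies of $K_k$ via Ore compositions; the rigid degree structure derived above would then force such a composition to be trivial, giving $G = K_k$. An alternative finish is a direct coloring-extension argument along the lines of Kierstead and Kostochka's original proof, using Kempe chains within the bipartite-like $(L,H)$ structure to extend a $(k-1)$-coloring of $G-e$ across some carefully chosen edge $e$. I expect this structural endgame to be the most delicate step, since the generic density argument closes the theorem only up to a bounded exceptional range.
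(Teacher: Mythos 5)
The paper does not actually prove Theorem~\ref{OreD5}: it is quoted from Kierstead and Kostochka~\cite{KK}, and the paper only remarks that Theorem~\ref{OreK} ``provides short proofs'' of it. So your proposal has to stand on its own, and as written it does not close.

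Your structural step is correct ($H$ is independent, every vertex of $H$ has degree exactly $k$ with all neighbors of degree $k-1$), as are both bounds $\frac{(k-3)(n-k)}{k-1}\le |H|\le \frac{(k-1)n}{2k-1}$. But these are incompatible only when $n>\frac{k(k-3)(2k-1)}{k^2-5k+2}$, which is roughly $2k+3$ for large $k$ and equals $24.75$ for $k=6$; meanwhile a $k$-critical $G\ne K_k$ may have as few as $k+2$ vertices. The whole range $k+2\le n\le 2k+3$ (and up to $n=24$ when $k=6$) is therefore untreated, and this is exactly where the content of the theorem lies: at $n=2k-1$ the Kostochka--Yancey bound is attained with equality by the Ore composition of two copies of $K_k$, your two inequalities on $|H|$ read $k-3\le |H|\le k-1$ and are perfectly consistent, so no density computation alone can finish there. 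Neither of your proposed endgames repairs this. The ``sharper potential form'' you invoke does not exist in the form you need: Kostochka and Yancey characterize only graphs attaining \emph{exact} equality as $k$-Ore, whereas in the small-$n$ range your inequalities permit $G$ to exceed the bound by an edge or two, so $k$-Ore structure is not forced; and even granting it, you would still owe a proof that the only $k$-Ore graph with Ore-degree at most $2k-1$ is $K_k$ (the paper asserts this but does not prove it). Your alternative finish --- redoing the Kierstead--Kostochka Kempe-chain argument --- is not a completion of the density approach but a substitution of the original multi-page proof for it. So there is a genuine gap: the small-$n$ regime needs a concrete argument, not a pointer.
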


This is best possible since $C_5$ join $K_{k-3}$ is $k$-critical but has Ore-degree $2k$. Note that the only $3$-critical graphs are odd cycles. The remaining cases to consider are $k=4$ and $k=5$. Settling the case $k=5$, Kostocha, Rabern and Steibitz~\cite{KRS} characterized the $5$-critical graphs with Ore-degree at most $9$. There are only two such graphs, $K_5$ and another they called $O_5$. To define $O_5$ let us first define the following construction:

\begin{definition}
An \emph{Ore-composition} of graphs $G_1$ and $G_2$ is a graph obtained as follows:
\begin{enumerate}
\item delete the edge $xy$ from $G_1$;
\item split some vertex $z$ of $G_2$ into two vertices $z_1$ and $z_2$ of positive degree;
\item identify $x$ with $z_1$ and identify $y$ with $z_2$.
\end{enumerate}

We say that $G_1$ is the \emph{edge-side} and $G_2$ the \emph{split-side} of the composition. Furthermore, we say that $xy$ is the \emph{replaced edge} of $G_1$ and that $z$ is the \emph{split vertex} of $G_2$. Finally we say that $G$ is a \emph{$k$-Ore graph} if it can be obtained from copies of $K_k$ and repeated Ore-compositions.
\end{definition}

Note the following easy proposition:

\begin{proposition}\label{OreSep}
If $G$ is a $k$-critical graph and $\{x,y\}$ is a $2$-separation of $G$, then $G$ is the Ore-composition of two $k$-critical graphs where $xy$ is the replaced edge of the edge-side of the composition.
\end{proposition}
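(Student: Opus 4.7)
The plan is the standard argument for 2-separations in critical graphs. Write $V(G) = V_1 \cup V_2$ with $V_1 \cap V_2 = \{x,y\}$, no edges between $V_1 \setminus \{x,y\}$ and $V_2 \setminus \{x,y\}$, and $|V_i| \ge 3$; set $G_i := G[V_i]$. Since each $G_i$ is a proper subgraph of the $k$-critical $G$, each is $(k-1)$-colorable.

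The key observation is that given any $(k-1)$-colorings $f_1$ of $G_1$ and $f_2$ of $G_2$ that agree on the predicate $f_i(x) = f_i(y)$, one can permute the color classes of $f_2$ so that $f_2|_{\{x,y\}} = f_1|_{\{x,y\}}$ and glue them into a $(k-1)$-coloring of $G$, contradicting criticality. Hence for every pair $(f_1,f_2)$ the predicate must differ, and since both $G_i$ are colorable this forces that either every $(k-1)$-coloring of $G_1$ identifies $x,y$ and every $(k-1)$-coloring of $G_2$ separates them, or vice versa. Swapping $G_1$ and $G_2$ if needed, I may assume the former. In particular $xy \notin E(G)$, since otherwise $xy \in E(G_i)$ would force every coloring of each $G_i$ to satisfy $c(x) \neq c(y)$. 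Now set $H_1 := G_1 + xy$ and let $H_2$ be $G_2$ with $x,y$ identified into a new vertex $z$; neither is $(k-1)$-colorable by the color-forcing above.

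I next check that $H_1$ and $H_2$ are $k$-critical and that $G$ is their Ore-composition. For criticality of $H_1$: if $e = xy$ then $H_1 - e = G_1$ is $(k-1)$-colorable; otherwise $e \in E(G_1)$, and a $(k-1)$-coloring of $G - e$ (which exists by $k$-criticality of $G$) restricts on $V_2$ to a coloring of $G_2$, hence has $c(x) \neq c(y)$, and thus restricts on $V_1$ to a valid coloring of $H_1 - e = (G_1 - e) + xy$. Criticality of $H_2$ is symmetric: for $e \in E(G_2)$ the restriction to $V_1$ of a $(k-1)$-coloring of $G - e$ is a coloring of $G_1$, forcing $c(x) = c(y)$, so the restriction to $V_2$ descends to a coloring of $H_2 - e$ in which $z$ receives that common value. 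Finally, deleting $xy$ from $H_1$ returns $G_1$; splitting $z$ in $H_2$ into $z_1, z_2$ with respective neighborhoods $N_G(x) \cap V_2$ and $N_G(y) \cap V_2$ restores $G_2$; and identifying $x = z_1$, $y = z_2$ reassembles $G$, exhibiting $G$ as the Ore-composition with edge-side $H_1$ and replaced edge $xy$. The main subtle point I anticipate is handling simplicity when $x,y$ share a neighbor in $V_2$, since the split must duplicate that neighbor onto both $z_1$ and $z_2$; this is accommodated by interpreting the split so that $z_1, z_2$ may share neighbors, but not by anything deep in the coloring argument itself.
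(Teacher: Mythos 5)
Your argument is essentially the paper's: establish the dichotomy that every $(k-1)$-coloring of one side identifies $x,y$ while every coloring of the other side separates them, form $G_1+xy$ and $G_2/xy$, and verify criticality of each by restricting $(k-1)$-colorings of $G-e$ across the cut. The one place you go astray is your final ``subtle point'': you cannot accommodate a common neighbor $w\in V_2$ of $x$ and $y$ by letting $z_1,z_2$ share neighbors, since the split in an Ore-composition partitions the edges at $z$ (and the edge count $|E(G)|=|E(G_1)|+|E(G_2)|-1$ used later in the paper depends on exactly this). The correct resolution is that the case is vacuous: if $w\in V_2$ were adjacent to both $x$ and $y$, take a $(k-1)$-coloring $\phi$ of $G-xw$; it restricts to a proper coloring of $G[V_1]$, so $\phi(x)=\phi(y)$, while the edge $yw$ forces $\phi(y)\ne\phi(w)$, hence $\phi(x)\ne\phi(w)$ and $\phi$ is in fact a proper coloring of all of $G$, a contradiction. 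With that observation in place of your reinterpretation of the split, the proof is complete.
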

\begin{proof}
Let $G=A\cup B$ where $V(A)\cap V(B)=\{x,y\}$. Consider the proper $(k-1)$-colorings of $A$ and $B$. Up to permutation of colors, $x$ and $y$ either receive the same color or different colors. As $G$ is not $(k-1)$-colorable, we may assume without loss of generality that $x$ and $y$ receive the same color in every coloring of $A$ and different colors in every coloring of $B$. Hence $G_1=A+xy$ is not $(k-1)$-colorable. Similarly $G_2=B/xy$ is not $(k-1)$-colorable. Furthermore, $G$ is the Ore-composition of $G_1$ and $G_2$ where $xy$ is the replaced edge of the edge-side $G_1$.

Finally suppose $A$ is not $k$-critical, then there exists a proper subset $A'$ of $A$ which is not $(k-1)$-colorable. If $xy\in A'$, then there exists $(k-1)$-coloring of $(A'-xy)\cup B$ as $G$ is $k$-critical. As $x$ and $y$ receive different colors in every $(k-1)$-coloring of $B$, this induces a $(k-1)$-coloring of $A'$. If $xy\not\in A'$, then $A'$ is a proper subset of $G$ and hence has a $(k-1)$-coloring. Thus $A$ is $k$-critical and similarly $B$ is $k$-critical.
\end{proof}

We may now define $O_5$. Let $G_1,G_2$ be isomorphic to $K_5$, $x,y\in V(G_1)$, and $z\in V(G_2)$. Split $z$ in $G_2$ into two vertices $z_1,z_2$ of degree two. Let $O_5$ denote the Ore-composition of $G_1$ with replaced edge $xy$ and split-side $G_2$ with vertex $z$ split as noted. 

\begin{thm}[Kostochka, Rabern and Steibitz]\label{OreD4}
The $5$-critical graphs with Ore-degree at most $9$ are $K_5$ and $O_5$.
\end{thm}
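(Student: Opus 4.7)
The plan is as follows. Let $G$ be a $5$-critical graph with Ore-degree at most $9$. First, I would extract the basic degree structure. Since $G$ is $5$-critical, $\delta(G) \ge 4$, and combined with the Ore-degree hypothesis this forces $\Delta(G) \le 5$: a vertex of degree at least $6$ would have a neighbor of degree at most $3$. Moreover, the degree-$5$ vertices form an independent set whose members each have all five neighbors of degree exactly $4$. Writing $n_4, n_5$ for the numbers of vertices of each degree, we have $2|E(G)| = 4n_4 + 5n_5$.

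Second, I would apply the Kostochka--Yancey density bound $|E(G)| \ge (9|V(G)| - 5)/4$ for $k = 5$. Combined with the degree-count identity, this yields $n_4 \le n_5 + 5$. A double-count of the edges between the two degree classes (each degree-$5$ vertex contributes $5$ edges, absorbed by degree-$4$ vertices each of degree at most $4$) gives $5n_5 \le 4n_4$. Together these inequalities force $n_5 \le 20$, so $G$ lies in a bounded family of candidates.

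Third, in the equality case $n_4 = n_5 + 5$ the Kostochka--Yancey bound is attained, and by their equality characterization $G$ is $5$-Ore. I would then argue that the only $5$-Ore graphs with Ore-degree at most $9$ are $K_5$ and $O_5$, by analyzing the last Ore-composition producing $G$ from smaller $5$-critical pieces $G_1, G_2$ with replaced edge $xy \in G_1$ and split vertex $z \in G_2$. The Ore-degree bound forces $d_{G_1}(x) = d_{G_1}(y) = d_{G_2}(z) = 4$ and $d_1' = d_2' = 2$ in the split, and each other neighbor of $x$ or $y$ in $G_1$ as well as each neighbor of $z$ in $G_2$ must have degree $4$ in its own piece. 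A direct check shows these constraints are met only when $G_1 = G_2 = K_5$, producing exactly $O_5$; and $O_5$ itself admits no further Ore-composition preserving Ore-degree $\le 9$, because every degree-$4$ vertex of $O_5$ has a degree-$5$ neighbor.

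The main obstacle is the strict case $n_4 < n_5 + 5$, where $|E(G)| > (9|V(G)|-5)/4$ and $G$ fails to be $5$-Ore. Here $|V(G)|$ is bounded, but one cannot immediately reduce via Proposition~\ref{OreSep}, because a $2$-separation would decompose $G$ into smaller $5$-critical pieces whose individual Ore-degrees could exceed $9$. One must instead either enumerate the bounded family of candidates directly, or argue via Kempe-chain recolorings at a degree-$5$ vertex $v$: any $4$-coloring of $G-v$ must use all four colors on the five degree-$4$ neighbors of $v$, and exchanging along suitable Kempe chains, together with the scarcity of degree-$5$ vertices, should free a color at $v$ and contradict $5$-criticality. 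Pinning down this non-Ore regime cleanly is the delicate heart of the argument.
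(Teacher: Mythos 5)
A preliminary remark: this paper does not prove Theorem~\ref{OreD4} at all --- it quotes the result of Kostochka, Rabern and Stiebitz and remarks only that Theorem~\ref{OreK} yields a short proof --- so your proposal must stand on its own. Your setup is correct: $\delta(G)\ge 4$ and the Ore-degree bound force $\Delta(G)\le 5$, the degree-$5$ vertices form an independent set with all neighbors of degree $4$, Kostochka--Yancey gives $n_4\le n_5+5$, and edge-counting between the classes gives $5n_5\le 4n_4$. Your extremal case is also sound in outline: equality forces $G$ to be $5$-Ore (note this uses Kostochka--Yancey's characterization of the extremal graphs, not merely the inequality stated as Theorem~\ref{OreK}), and the induction on $5$-Ore graphs works because $d_G(x)+d_G(y)=d_{G_1}(x)+d_{G_1}(y)-2+d_{G_2}(z)\ge 10$ forces both $x$ and $y$ to have degree exactly $5$ in $G$, while the fact that every degree-$4$ vertex of $O_5$ has a degree-$5$ neighbor rules out $O_5$ as either side of a further composition.

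The genuine gap is exactly where you locate it, and it is not closable by either of the remedies you suggest. In the strict case your inequalities give only $n_4\le n_5+4$, hence $n_5\le 16$ and $|V(G)|\le 36$: this is nowhere near an enumerable family. And the Kempe-chain remark is a hope, not an argument --- naive Kempe exchanges at a degree-$5$ vertex are precisely what fail for critical graphs in general, which is why Ore-degree theorems are nontrivial. What is missing is a second counting inequality that exploits the independent set of degree-$5$ vertices. For instance, the Kierstead--Rabern bound $2|E(G)|\ge (k-2)|V(G)|+{\rm mic}(G)+1$ quoted at the end of this paper, applied with $I$ the set of degree-$5$ vertices (so ${\rm mic}(G)\ge 5n_5$), gives $4n+n_5=2|E(G)|\ge 3n+5n_5+1$, i.e.\ $n_4\ge 3n_5+1$; combined with $n_4\le n_5+5$ this forces $n_5\le 2$ and $|V(G)|\le 9$, after which a finite analysis is genuinely feasible. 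Alternatively one needs a strengthened density bound for $5$-critical graphs that are not $5$-Ore, the $k=5$ analogue of what this paper proves for $k=4$ in Theorem~\ref{Ore4Best}. Without some such additional ingredient your plan does not close.
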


The remaining case to consider then is $k=4$. The question to answer is: what are the $4$-critical graphs with Ore-degree at most seven? The main purpose of this paper is to answer this question.

Note that for all $k\ge 6$, the only $k$-Ore graph of Ore-degree at most $2k-1$ is $K_k$. Meanwhile the only $5$-Ore graphs of Ore-degree at most $9$ are $K_5$ and $O_5$. Thus it is reasonable to conjecture that the $4$-critical graphs of Ore-degree at most seven are precisely the $4$-Ore graphs of Ore-degree at most seven. Our main theorem asserts that this is indeed the case:

\begin{thm}\label{Main}
A graph $G$ of Ore-degree at most seven is $4$-critical if and only if $G$ is a $4$-Ore graph.
\end{thm}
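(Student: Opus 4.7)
The forward direction is a routine induction: the converse of Proposition \ref{OreSep} (that every Ore-composition of two $4$-critical graphs is itself $4$-critical) can be checked directly from the colorings, so starting from $K_4$ and inducting on the number of Ore-compositions shows every $4$-Ore graph is $4$-critical. Since $K_4$ itself has Ore-degree $6$, $4$-Ore graphs of Ore-degree at most $7$ certainly exist. For the reverse direction I plan to argue by induction on $|V(G)|$, using the improved density bound announced in the abstract together with the Ore-degree hypothesis to locate a $2$-separation, and then applying Proposition \ref{OreSep} to recurse.

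The key input is the bound $|E(G)| \geq 1.6|V(G)| + 0.2\,\alpha(D_3(G)) - 0.6$ for every $4$-critical graph $G$, equivalently $P(G) := 8|V(G)| + \alpha(D_3(G)) - 5|E(G)| \leq 3$. I would prove this by adapting the Kostochka-Yancey potential-method template: take a minimum counterexample (minimizing $|V(G)|+|E(G)|$) and attempt reductions such as edge contractions, identification of two non-adjacent vertices with appropriate common-neighbor structure, or surgical replacement of a small subgraph near a low-degree vertex. Criticality furnishes a $3$-coloring of the reduced graph, and if reductions are chosen so that $P$ strictly decreases while the coloring extends to $G$, the non-$3$-colorability of $G$ gives the desired contradiction. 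The novelty relative to Kostochka-Yancey is that $\alpha(D_3)$ changes discontinuously under these operations --- a reduction can move vertices between $V_3$ and $V_4$, altering both the vertex set of $D_3$ and its independence number --- so the bookkeeping on $P$ must be carried out carefully.

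Given the bound, the Ore-degree hypothesis forces every vertex of $G$ to have degree $3$ or $4$, the vertices of degree $4$ to be pairwise non-adjacent, and each degree-$4$ vertex to have four degree-$3$ neighbors; hence $|E(G)| = (3n_3+4n_4)/2$ and the density bound rewrites as $4n_4 \geq n_3 + 2\alpha(D_3(G)) - 6$. Combined with the fact that $D_3(G)$ has maximum degree at most $3$ (and hence large independent sets), together with a careful analysis of small vertex cuts, this inequality should force every such $G$ with $|V(G)|\geq 5$ to admit a $2$-separation. Proposition \ref{OreSep} then expresses $G$ as an Ore-composition of two smaller $4$-critical graphs $G_1$ (edge-side) and $G_2$ (split-side); both still have Ore-degree at most $7$, since every non-replaced edge has Ore-degree bounded by the corresponding edge of $G$, while the replaced edge $xy$ of $G_1$ satisfies
\[
d_{G_1}(x) + d_{G_1}(y) = d_G(x) + d_G(y) + 2 - d_{G_2}(z) \leq 4 + 4 + 2 - 3 = 7,
\]
using $d_G(x), d_G(y) \leq 4$ and the $4$-critical lower bound $d_{G_2}(z) \geq 3$. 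Induction on $|V(G)|$ then finishes the proof. The main obstacle is Stage one: the Kostochka-Yancey argument for $|E|\geq (5n-2)/3$ is already tight and intricate, and strengthening the right-hand side by a term involving $\alpha(D_3)$ requires genuinely new case work because $\alpha$ behaves non-linearly under the edge contractions and vertex identifications central to the potential method.
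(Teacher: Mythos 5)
Your high-level architecture (prove a density bound involving $\alpha(D_3(G))$, then deduce the characterization by decomposing along $2$-separations) matches the paper's, and your degree computation for the replaced edge is essentially Lemma~\ref{4CritOre7}. But the deduction stage has two concrete gaps. First, the constant matters: the bound with $-.6$ — equivalently, in your normalization, $4n_4\ge n_3+2\alpha(D_3(G))-6$, i.e.\ $s(D_3(G))\le 3$ where $s(H)=|E(H)|-|V(H)|+\alpha(H)$ — is attained with \emph{equality} by $4$-Ore graphs (e.g.\ $H_7$), so it cannot by itself distinguish a non-$4$-Ore counterexample from the graphs you are trying to characterize. The paper needs the refined dichotomy of Theorem~\ref{Ore4Best}, which gives the strictly better constant $-.4$ (hence $s(D_3(G))\le 2$) when $G$ is not $4$-Ore; your proposal never invokes this.

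Second, ``this inequality should force a $2$-separation'' is exactly the non-routine step, and it is not what happens in the paper: the minimum counterexample is shown to have \emph{no} $2$-separation (Claim~\ref{No2Cut}), and the contradiction comes instead from the structure of $D_3(G)$. Since the degree-four vertices form an independent set, $D_3(G)$ must be non-bipartite (else color them with a third color), so some component has $s\ge 1$; the constraint $s(D_3(G))\le 2$ together with Proposition~\ref{S} pins the odd components down to triangles or a $5$-cycle, after which one excludes triangles containing two degree-three vertices, shows there must be two triangle components, builds nine $5$-cycles $C_{ij}$ through their degree-four neighbors, and lands in a parity contradiction (a set of nine vertices would have to induce a perfect matching). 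Your rearranged inequality, read as a lower bound on $n_4$, offers no visible mechanism for producing a $2$-cut, and the non-bipartiteness of $D_3(G)$ — the engine of the actual argument — does not appear in your plan. Together with the fact that Stage one (the density bound itself, which occupies Sections 2--6 via collapsible sets, critical extensions, and an iterated discharging) is only sketched, the proposal does not yet constitute a proof.
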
 

Furthermore, we may describe some of the structural properties of $4$-Ore graphs of Ore-degree at most 7 and hence that of $4$-critical graphs of Ore-degree at most 7. First, we need the following lemma.

\begin{lemma}\label{4CritOre7}
If $G$ is a $4$-critical graph of Ore-degree at most 7 and $G$ is the Ore-composition of two $4$-critical graphs $G_1$ and $G_2$, then both $G_1$ and $G_2$ have Ore-degree at most 7.
\end{lemma}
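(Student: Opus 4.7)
The plan is to use the hypothesis on the Ore-degree of $G$ to pin down a tight two-sided bound on the degrees in $G$, and then to verify by inspection that this bound propagates to $G_1$ and $G_2$ across the Ore-composition.

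First, since $G$ is $4$-critical, $\delta(G) \ge 3$. Combining this with Ore-degree at most $7$ yields $3 \le d_G(v) \le 4$ for every vertex $v$ of $G$: every vertex has a neighbor, and any neighbor has degree at least $3$. In particular $d_G(x), d_G(y) \in \{3,4\}$.

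Next, let $d_1$ and $d_2$ be the positive degrees of $z_1$ and $z_2$ in $G_2$ after splitting $z$, so that by the definition of Ore-composition
$$d_G(x) = d_{G_1}(x) - 1 + d_1, \qquad d_G(y) = d_{G_1}(y) - 1 + d_2.$$
Since $G_1$ is $4$-critical, $d_{G_1}(x), d_{G_1}(y) \ge 3$; since $G_2$ is $4$-critical, $d_{G_2}(z) = d_1 + d_2 \ge 3$. Combined with $d_G(x), d_G(y) \le 4$ this yields the key numerical bounds
$$1 \le d_1, d_2 \le 2, \qquad d_1 + d_2 \ge 3.$$

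With these in hand the verification is a routine check of the finitely many types of edges. In $G_1$, edges avoiding $\{x,y\}$ inherit their Ore-degree from $G$; for an edge at $x$ (resp.\ $y$) other than $xy$ the fact that $d_{G_1}(x) = d_G(x) + 1 - d_1$ and $d_1 \ge 1$ keeps the Ore-degree at most $7$. In $G_2$, edges avoiding $z$ are unchanged, while an edge $zu$ corresponding to $xu$ in $G$ satisfies $d_{G_2}(z) + d_{G_2}(u) = d_1 + d_2 + d_G(u)$, and one rewrites $d_1 + d_G(u) = d_G(x) + d_G(u) + 1 - d_{G_1}(x)$ and uses $d_{G_1}(x) \ge 3$ and $d_2 \le 2$ to conclude.

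The one case where both halves of the tight bound are used simultaneously is the replaced edge $xy$ in $G_1$, where
$$d_{G_1}(x) + d_{G_1}(y) = d_G(x) + d_G(y) - (d_1 + d_2) + 2 \le 4 + 4 - 3 + 2 = 7.$$
This is the pinch point of the argument: it is the only step that requires both $d_G(x), d_G(y) \le 4$ and $d_1 + d_2 \ge 3$ at once. There is no genuine obstacle beyond keeping track of which inequality is needed where.
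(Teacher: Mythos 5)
Your proof is correct and follows essentially the same route as the paper: both arguments derive $3 \le d_G(v) \le 4$ from $4$-criticality plus the Ore-degree hypothesis, track how degrees of $x$, $y$, and $z$ change across the composition, and check each type of edge in $G_1$ and $G_2$. Your explicit bookkeeping with $d_1, d_2$ (the split degrees) is a slightly more systematic packaging of the same inequalities the paper uses, including the identical computation $d_{G_1}(x)+d_{G_1}(y) = d_G(x)+d_G(y)-d_{G_2}(z)+2 \le 7$ for the replaced edge.
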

\begin{proof}
Let us prove that they have Ore-degree at most seven. Suppose without loss of generality that $G_1$ is the edge-side with replaced edge $xy$ and $G_2$ is the split-side where $z$ is the split vertex. As $G_1$ is $4$-critical, $x$ and $y$ have degree at least three in $G_1$.  As $G_2$ is $4$-critical, $z$ has degree at least three in $G$.

If $d_{G_1}(x) + d_{G_1}(y)\ge 8$, then $d_G(x) + d_G(y)\ge 9$ and hence one of $x$ and $y$ has degree at least 5 in $G$, contradicting that $G$ has Ore-degree at most 7. So $d_{G_1}(x)+d_{G_1}(y)\le 7$. Note that $d_{G_1}(x)\le d_G(x)$ and $d_{G_1}(y) \le d_G(y)$. This implies that the sum of the degree of every edge in $G_1$ other than $xy$ is at most the sum of the degrees in $G$. Hence $G_1$ has Ore-degree at most 7.

Similarly we claim that $d_{G_2}(z) \le d_G(x), d_G(y)$. If $d_{G_2}(z)=3$, then this follows as $G$ is $4$-critical. So we may suppose that $d_{G_2}(z)=4$. But then $d_G(x)+d_G(y) = d_{G_1}(x)+d_{G_2}(y) - 2 + d_{G_2}(z) = d_{G_1}(x) + d_{G_2}(y) + 2$. As $x$ and $y$ have degree at least 3 in $G_1$ but degree at most 4 in $G$, we find that $d_{G_1}(x)=d_{G_2}(y)=3$ and $d_G(x)=d_G(y)=4$. This proves the claim. It follows then that the sum of the degree of every edge in $G_2$ are at most the sum of the degrees in $G$ and hence $G_2$ has Ore-degree at most seven.
\end{proof}

Kostochka and Yancey proved that if a $4$-Ore graph is the Ore-composition of two graphs, then both of those graphs are $4$-Ore. Using this fact and Lemma~\ref{4CritOre7}, we may prove the following structural characterization of $4$-Ore graphs of Ore-degree at most 7. First we need the following definitions. We say $K_4-e$ subgraph of a graph $G$ whose vertices of degree three are also degree three in $G$ is a \emph{diamond}. We call the vertices of degree two in the $K_4-e$ the \emph{ends} of the diamond and the other vertices we call \emph{internal}. Let $H_7$ denote the unique Ore-composition of two $K_4$s.

\begin{lem}
If $G$ is a $4$-Ore graph of Ore-degree at most 7, then either
\begin{enumerate}
\item $G=K_4$ or $H_7$, or
\item $G$ has a diamond where both ends have degree 4, or
\item $G$ has a diamond where one end has degree 4 and the other end has degree 3 and a neighbor of degree 4.
\end{enumerate}
\end{lem}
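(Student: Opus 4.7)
The plan is to proceed by strong induction on $|V(G)|$. The base case $G=K_4$ gives conclusion~(1). For the inductive step, since $G$ is a $4$-Ore graph other than $K_4$, write $G$ as the Ore-composition of two $4$-Ore graphs $G_1$ (the edge-side, with replaced edge $xy$) and $G_2$ (the split-side, with split vertex $z$, split into $z_1,z_2$ of positive degrees $d_{z_1}\le d_{z_2}$). By the Kostochka--Yancey fact mentioned just before this lemma together with Lemma~\ref{4CritOre7}, both $G_1$ and $G_2$ are $4$-Ore graphs of Ore-degree at most seven, so the inductive hypothesis applies to each. A degree analysis analogous to that in the proof of Lemma~\ref{4CritOre7} yields $d_G(x),d_G(y)\in\{3,4\}$, $d_{G_2}(z)\in\{3,4\}$, and $(d_{z_1},d_{z_2})\in\{(1,2),(2,2)\}$; if $G_1=G_2=K_4$ then $G=H_7$, giving~(1), so we may assume some $G_i\ne K_4$.

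The principal case is $G_1=K_4$ (the case $G_2=K_4$ is symmetric, with the natural diamond built instead from the triangle $K_4-z$ in the split-side). Let $w_1,w_2$ denote the two vertices of $G_1$ other than $x,y$; each has degree $3$ in $G$, and $\{w_1,w_2,x,y\}$ induces $K_4-xy$ in $G$, yielding a ``natural'' diamond $D$ with internal vertices $w_1,w_2$ and ends $x,y$. If $d_{G_2}(z)=4$, then $d_G(x)=d_G(y)=4$ and $D$ verifies~(2). If $d_{G_2}(z)=3$, then (WLOG) $d_G(x)=3$ and $d_G(y)=4$, and letting $v$ be the unique neighbor of $z_1$ in the split-side, $D$ verifies~(3) provided $d_{G_2}(v)=4$. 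If neither $G_i$ is $K_4$, then by induction each $G_i$ is $H_7$ or has a diamond of type (2) or (3); since every vertex of $V(G_1)\setminus\{x,y\}$ and every vertex of $V(G_2)\setminus\{z\}$ retains its degree under the composition, any diamond of $G_1$ avoiding $\{x,y\}$ or of $G_2$ avoiding $z$ persists in $G$ as a diamond of the same type, and the remaining configurations are handled by a short case check using the explicit structure of $H_7$.

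The main obstacle is the residual subcase of $G_1=K_4$ with $d_{G_2}(z)=3$ and $d_{G_2}(v)=3$: here the natural diamond fails to verify~(3). Note that in this subcase the Ore-degree constraint at $y$ also forces the remaining two neighbors of $z$ in $G_2$ to have degree $3$, so all three neighbors of $z$ are degree $3$ in $G_2$. I would then apply induction to $G_2\ne K_4$: if $G_2=H_7$, the few possible positions for $z$ (namely the degree-$3$ vertices of $H_7$ whose neighbors are all degree $3$) can be enumerated, and in each case one verifies directly that the ``other'' $K_4-e$ of $H_7$ (the one containing the unique degree-$4$ vertex of $H_7$) persists in $G$ and has its degree-$3$ end now adjacent to the new degree-$4$ vertex $y$, verifying~(3); if $G_2$ has a type (2)/(3) diamond avoiding $z$, it persists directly; and any remaining subcase in which every diamond of $G_2$ contains $z$ can be handled by tracking how $z_1$ or $z_2$ sits inside an appropriate $K_4-e$ subgraph of $G$, again producing a diamond of the required type.
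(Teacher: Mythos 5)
Your proposal is correct and follows essentially the same route as the paper: induction via the Ore-decomposition, Lemma~\ref{4CritOre7} together with the Kostochka--Yancey fact to apply the inductive hypothesis to both sides, and a case analysis whose crux is the residual subcase where the natural diamond from the $K_4$ edge-side fails to verify (3), resolved exactly as in the paper by observing that all neighbors of $z$ in $G_2$ then have degree $3$ and invoking induction on $G_2$. One small simplification you missed: that observation already shows $z$ lies in no type-(2) or type-(3) diamond of $G_2$ (it is neither a degree-$4$ end, nor a degree-$3$ end with a degree-$4$ neighbor, nor an internal vertex, since internal vertices are adjacent to a degree-$4$ end), so your final ``every diamond of $G_2$ contains $z$'' subcase is vacuous and needs no separate tracking argument.
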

\begin{proof}
We proceed by induction on the number of vertices. We may assume that (1) does not hold. As $G\ne K_4$, $G$ is the Ore-composition of two graphs, an edge-side $G_1$ with replaced edge $xy$ and a split-side $G_2$ with split vertex $z$. By Kostochka and Yancey, $G_1$ and $G_2$ are $4$-Ore and by Lemma~\ref{4CritOre7}, $G_1$ and $G_2$ have Ore-degree at most 7. Hence, at least one of (1)-(3) holds for each of $G_1$ and $G_2$.

Note that if $d_{G_2}(z)=3$, then either $\{d_{G_1}(x),d_{G_1}(y)\}=\{3,3\}$ and $\{d_G(x),d_G(y)\}=\{3,4\}$, or, $d_{G_1}(x),d_{G_1}(y)\}=\{3,4\}$ and $\{d_G(x),d_G(y)\}=\{4,4\}$. Similarly if $d_{G_2}(z)=4$, then $\{d_{G_1}(x),d_{G_2}(y)\}=\{3,3\}$ and $\{d_G(x),d_G(y)\}=\{4,4\}$. The following is a useful claim:

\begin{claim}\label{3Then3}
If $z$ is degree 3 in $G_2$, then at least one of $x$ or $y$ is degree $3$ in $G$.
\end{claim}
\begin{proof}
Suppose not. Then as both are degree $4$ in $G$, it follows that $z$ is not adjacent to a degree $4$ in $G_2$. By induction, at least one of (1)-(3) holds for $G_2$. 

Suppose (1) holds for $G_2$. If $G_2=K_4$, then one of $x$ or $y$ is the end of a diamond in $G$ resulting from splitting $G_2$ while the other is adjacent to the other end of that diamond which is degree 3. Hence (3) holds for $G$. If $G_2=H_7$, then $z$ is a vertex of degree three not adjacent to the unique vertex of degree 4 in $H_7$. But then there is a diamond in $G_2$ not including $z$. That diamond is also a diamond of $G$. Moreover, its end of degree 3 is adjacent to $z$ in $G_2$. Hence that end is now adjacent to a degree 4 in $G$. So (3) holds for $G$. So we may suppose that one of (2) or (3) holds for $G_2$. But then, as $z$ is degree 3 in $G_2$ and not adjacent to a degree 4 in $G_2$, it follows that (2) or (3) holds for $G$ as well.
\end{proof}

By induction, at least one of (1)-(3) holds for $G_1$. First suppose (2) or (3) holds for $G_1$. Then the same holds for $G$ unless $xy$ is an edge of the diamond or if (3) holds is the edge from the end of degree 3 to its neighbor of degree 4. Now if $x$ and $y$ both have degree 3 in $G_1$, then there will be two adjacent degree 4s in $G$ since at least one of $x$ or $y$ will be degree 4 in $G$. So we may suppose that one of $x$ or $y$ is degree three in $G_1$ and the other is degree $4$ in $G_1$. Hence both $x$ and $y$ are degree 4 in $G$ and $d_{G_2}(z)=3$, contradicting Claim~\ref{3Then3}.

Finally suppose that (1) holds for $G_1$. Suppose $G_1=H_7$. If $x$ and $y$ are both degree 3 in $G$, then at least one of $x$ and $y$, say $x$, is degree 4 in $G$. But then $x$ is not adjacent to the unique vertex of degree 4, call it $v$, in $H_7$. If $y$ is the other vertex not adjacent to $v$ in $H_7$, then $x$ and $v$ are the ends of a diamond in $G$ and so (2) holds for $G$. If not, then $x$ is adjacent to the end of a diamond in $G$ whose other end is $v$. So (3) holds for $G$. So we may assume without loss of generality that $x=v$, the unique vertex of degree 4 in $H_7$. But then $d_{G_2}(z)=3$ and $y$ has degree $4$ in $G$, contradicting Claim~\ref{3Then3}.

So we may suppose that $G_1=K_4$. Thus $x$ and $y$ are degree 3 in $G_1$ and are the ends of a diamond in $G$. If both $x$ and $y$ are degree $4$ in $G$, the (2) holds. So we may assume without loss of generality that $y$ is degree 4 in $G$ and $x$ is degree 3 in $G$. Thus $z$ is degree 3 in $G_2$. Moreover, the neighbor of $x$ in $G$ not in the diamond is degree 4, then (3) holds for $G$. So we may assume that $z$ is not adjacent to a degree $4$ in $G_2$. Thus if (2) or (3) holds for $G_2$, then the same holds for $G$. So we may assume that (1) holds for $G_2$. If $G_2=K_4$, then $G=H_7$ and (1) holds for $G$. 

So we may assume that $G_2=H_7$. Hence $z$ is not adjacent to the unique vertex of degree $4$ in $H_7$. There are two possible splits of $z$, but in either case (2) or (3) holds in $G$.
\end{proof}

Since $K_4$ and $H_7$ also have diamonds, we obtain the following corollary which confirms a conjecture of Kierstead and Rabern~\cite{KR}:

\begin{cor}\label{HasDiamond}
Every $4$-critical graph $G$ with Ore-degree at most seven has a diamond.
\end{cor}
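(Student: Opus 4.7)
The plan is to combine Theorem~\ref{Main} with the preceding lemma and then verify the two exceptional graphs directly. First I would apply Theorem~\ref{Main} to conclude that any $4$-critical graph $G$ of Ore-degree at most seven is a $4$-Ore graph, so the preceding lemma applies and places $G$ in one of its three listed cases. Cases (2) and (3) each guarantee a diamond in $G$ by the statement of the lemma, so nothing further is needed there.

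The remaining task is to confirm that each of the two graphs arising in case (1), namely $K_4$ and $H_7$, itself contains a diamond. For $G = K_4$ this is immediate: every vertex has degree three, so removing any single edge yields a $K_4 - e$ subgraph of $G$ whose two degree-three subgraph-vertices (the endpoints of the surviving diagonal) still have degree three in $G$. For $G = H_7$ I would unfold the Ore-composition: taking two copies of $K_4$, deleting an edge $xy$ from the first, splitting a vertex $z$ of the second into $z_1, z_2$ of positive degree, and identifying $x = z_1$, $y = z_2$ produces a graph with exactly one vertex $v$ of degree four and six vertices of degree three. I would then check that the three vertices of the split-side $K_4$ other than $z$, together with $v$, induce a $K_4 - e$ subgraph of $H_7$ whose two degree-three subgraph-vertices have degree three in $H_7$, giving the required diamond.

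The only real obstacle is the bookkeeping for $H_7$: one has to choose the four vertices and the missing edge so that $v$ occupies an end of the diamond rather than an internal position, since the internal vertices are required to have degree three in the ambient graph. Once that check is made, the three cases of the lemma together exhaust all possibilities and the corollary follows.
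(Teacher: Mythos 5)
Your proposal is correct and matches the paper's argument: the paper derives the corollary in one line by combining the structural lemma (cases (2) and (3) already supply a diamond) with the observation that $K_4$ and $H_7$ themselves contain diamonds, all mediated by Theorem~\ref{Main} to pass from ``$4$-critical of Ore-degree at most seven'' to ``$4$-Ore.'' Your explicit verifications for $K_4$ and $H_7$ (in particular placing the degree-four vertex of $H_7$ at an \emph{end} of the $K_4-e$) are exactly the checks the paper leaves implicit.
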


Corollary~\ref{HasDiamond} says that $4$-critical graphs with Ore-degree at most seven are obtained from a restricted class of Ore-compositions, namely where the edge-side $G_1$ is always isomorphic to $K_4$. Moreover, since contracting a diamond in a $4$-critical graph of Ore-degree at most seven yields another $4$-critical graph of Ore-degree at most 7, Corollary~\ref{HasDiamond} implies that every such graph may be reduced to $K_4$ by a sequence of diamond contractions. Indeed, using Corollary~\ref{HasDiamond}, it is straightforward to characterize the explicit structure of $4$-critical graphs of Ore-degree at most seven; however, we omit its overly technical statement here.

\subsection{Ore's Conjecture}

It is natural to ask what the minimum density in a $k$-critical graph is. As every vertex must have degree at least $k-1$, $\frac{k-1}{2}$ is a trivial lower bound. Inspired by Hajos' construction, Gallai conjectured that the minimum density is in fact $\frac{k}{2} - \frac{2}{k-1}$. Kostochka and Yancey~\cite{KY1} recently resolved Gallai's conjecture:

\begin{thm}[Kostochka and Yancey]\label{OreK}
For all $k\ge 4$, if $G$ is a $k$-critical graph, then

$$|E(G)|\ge (\frac{k}{2}-\frac{1}{k-1})|V(G)| - \frac{k(k-3)}{2(k-1)}$$
\end{thm}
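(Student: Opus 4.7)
I would use the \emph{potential method}. Clearing denominators, the inequality is equivalent to
$$P(G) := (k+1)(k-2)|V(G)| - 2(k-1)|E(G)| \;\le\; k(k-3).$$
A direct calculation gives $P(K_k) = k[(k-2)(k+1) - (k-1)^2] = k(k-3)$, so $K_k$ is tight. The plan is to prove $P(G) \le k(k-3)$ for every $k$-critical $G$ by induction on $|V(G)|$, with $K_k$ as the base case.

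The key design feature of $P$ is that it is exactly additive under Ore-composition. If $G$ is the Ore-composition of $G_1$ and $G_2$, then $|V(G)| = |V(G_1)|+|V(G_2)|-1$ and $|E(G)| = |E(G_1)|+|E(G_2)|-1$, and a short calculation using the identity $(k+1)(k-2) - 2(k-1) = k(k-3)$ yields
$$P(G) = P(G_1) + P(G_2) - k(k-3).$$
In particular, if $G$ admits a $2$-vertex separation then Proposition~\ref{OreSep} decomposes it into two smaller $k$-critical pieces, and the inductive bound gives $P(G) \le 2k(k-3) - k(k-3) = k(k-3)$. Hence we may assume $G$ is $3$-connected and $G \ne K_k$; by Brooks' theorem $G$ then contains a vertex of degree at least $k$.

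The remaining and substantive task is to find a \emph{reducible configuration} in such a $G$. For every proper non-empty $R \subsetneq V(G)$, criticality ensures that $G-R$ admits a $(k-1)$-coloring $\phi$; I would ask whether some such $\phi$ extends to all of $G$. If one does, this contradicts $k$-criticality. Otherwise, for each $R$ one compares the number of $(k-1)$-colorings of $G-R$ with the list-colouring constraints on $R$ to force a local inequality relating the potentials of the two sides of the cut. Summing these inequalities over a carefully chosen family of subsets — in effect a discharging argument that sends potential from low-degree vertices toward their neighborhoods — should yield the desired bound $P(G) \le k(k-3)$.

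The main obstacle is that the Ore-composition identity is exact, so the induction carries no slack: in a $3$-connected $k$-critical graph other than $K_k$ a reducible configuration must always exist, and the coloring-extension step must be sharp enough to handle degree-$(k-1)$ vertices (the extremal case). Ruling out pathological configurations in which such low-degree vertices cluster around a small structured core, and verifying that the resulting local potential inequalities add without loss across the whole graph, will be the delicate combinatorial core of the argument.
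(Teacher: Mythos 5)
This theorem is not proved in the paper at all: it is Kostochka and Yancey's result, quoted from \cite{KY1}, and the present paper only uses it (and adapts its method for $k=4$ in Sections 2--6). So there is no "paper proof" to compare against; I can only assess your sketch on its own terms.

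Your setup is correct and matches the known framework: the potential $P(G)=(k+1)(k-2)|V(G)|-2(k-1)|E(G)|$ does satisfy $P(K_k)=k(k-3)$ and is exactly additive under Ore-composition, so $2$-separations reduce cleanly by Proposition~\ref{OreSep}. But everything after that point is a placeholder rather than an argument, and the placeholder does not point at the mechanism that actually makes the proof work. The engine of Kostochka--Yancey is not "comparing the number of $(k-1)$-colorings of $G-R$ with list-colouring constraints": counting colorings gives no edge-density information. What is needed is to extend the potential to \emph{subsets} $R\subsetneq V(G)$, take a $(k-1)$-coloring $\phi$ of $G[R]$, form the $\phi$-identification $G_\phi(R)$ (identify color classes of $R$ and add a clique on the new vertices), observe that criticality of $G$ forces $G_\phi(R)$ to contain a $k$-critical subgraph $W$, and then use the inductive bound on $W$ to prove that every proper subset of a minimum counterexample has potential bounded well above $k(k-3)$ --- exactly the role played by Lemmas~\ref{ExtForm} and \ref{R0}--\ref{R3} in this paper's $k=4$ adaptation. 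Only with those subset lower bounds in hand can one analyze the neighborhoods of degree-$(k-1)$ vertices (the "cluster" arguments) and run a discharging that closes with no slack. Your sketch also leans on "a reducible configuration must always exist" without proposing any candidate configuration, and, as you yourself observe, the zero-slack additivity on $k$-Ore graphs means every local inequality must be tight on an infinite extremal family --- a constraint your summation scheme does not address. As written, the proposal identifies the right potential but omits the entire substantive content of the proof.
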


Their proof is quite innovative. Moreover, Theorem~\ref{OreK} provides short proofs of Theorem~\ref{OreD5} when $k\ge 6$ and of Theorem~\ref{OreD4} when $k=5$. However, Theorem~\ref{OreK} with $k=4$ does not imply a characterization of the $4$-critical graphs with Ore-degree at most seven. Nevertheless, this will be the starting point of our proof. Of special interest to us then is a shorter version of their proof for $k=4$ (see~\cite{KY2}):

\begin{theorem}[Kostochka and Yancey]\label{Ore4}
If $G$ is a $4$-critical graph on $n$ vertices, then $$|E(G)|\ge \frac{5n-2}{3}$$
\end{theorem}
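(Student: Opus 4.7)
The plan is to prove the equivalent bound $\rho(G) \le 2$ for every $4$-critical graph $G$, where $\rho(H) := 5|V(H)| - 3|E(H)|$ is the ``potential.'' Note that $\rho(K_4) = 20 - 18 = 2$, giving the base case with equality. I would argue by induction on $|V(G)|$: assume toward a contradiction that $G$ is a smallest $4$-critical graph with $\rho(G) \ge 3$. I will use standard consequences of criticality: every vertex of $G$ has degree at least $3$; every proper induced subgraph is $3$-colorable; and (by Proposition~\ref{OreSep}) if $G$ has a $2$-vertex cut then $G$ is the Ore-composition of two smaller $4$-critical graphs $G_1, G_2$. In that last case, the standard counts $|V(G)| = |V(G_1)| + |V(G_2)| - 1$ and $|E(G)| = |E(G_1)| + |E(G_2)| - 1$ yield $\rho(G) = \rho(G_1) + \rho(G_2) - 2 \le 2+2-2 = 2$ by induction, a contradiction. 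So $G$ is $3$-connected.

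The main step is a color-class identification. Pick a proper subset $R \subsetneq V(G)$ with $|R| \ge 2$, chosen to minimize $\rho(G[R])$. Since $G[R]$ is $3$-colorable, fix a proper $3$-coloring $\phi$ with color classes $R_1, R_2, R_3$. Form $G^*$ from $G$ by identifying each $R_i$ to a single vertex $v_i$, deleting loops and parallel edges, and adding any missing edges to complete a triangle on $\{v_1,v_2,v_3\}$. A proper $3$-coloring of $G^*$ would combine with $\phi$ to give a proper $3$-coloring of $G$, contradicting $4$-criticality; hence $G^*$ contains a $4$-critical subgraph $G'$ with $|V(G')| < |V(G)|$. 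A direct count (losing exactly the $e(G[R])$ edges within $R$, gaining the three triangle edges, shrinking the vertex set by $|R|-3$) gives
$$\rho(G^*) \;\ge\; \rho(G) + 6 - \rho(G[R]).$$
The induction hypothesis bounds $\rho(G')$, and combining this with the above inequality should force $\rho(G[R])$ to be large; the minimality of $\rho(G[R])$ among proper subsets then reduces to a contradiction via a local analysis of small subsets.

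The main obstacle, and the subtle innovation of Kostochka and Yancey, is relating $\rho(G')$ to $\rho(G^*)$: since $G'$ is merely a subgraph of $G^*$, neither $\rho(G') \le \rho(G^*)$ nor $\rho(G') \ge \rho(G^*)$ holds automatically. The way around this is to track how the ``extra'' vertices and edges of $G^* \setminus G'$ contribute additional slack to $\rho$, converting this slack into a lower bound on $\rho(G[R])$ that the minimality choice cannot sustain. Handling the tiny cases $|R|\in\{2,3\}$ — where the identification removes at most one vertex and the gadget overhead swamps the potential gain — requires separate ad hoc arguments using the $3$-connectivity and the minimum-degree-$3$ bound, and it is precisely there that the integer constant $2$ in $(5n-2)/3$ gets pinned down exactly.
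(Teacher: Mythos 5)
This theorem is not proved in the paper; it is quoted from Kostochka and Yancey \cite{KY2}, and the paper's own contribution (Theorem~\ref{Ore4Best}) is a variant proved by the same general method. Your proposal correctly identifies the framework of that method: the potential $\rho$, induction on a minimal counterexample, disposal of $2$-cuts via Ore-composition (your computation $\rho(G)=\rho(G_1)+\rho(G_2)-2$ is right), and the color-class identification producing a smaller $4$-critical graph $G'$, with the bookkeeping done by pulling $G'$ back into $G$ as a ``critical extension'' of $R$ rather than comparing $\rho(G')$ to $\rho(G^*)$ directly. Up to that point you are on the authors' track.

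The genuine gap is in how you propose to finish. What the minimality of $\rho_G(R)$ over proper subsets actually yields is a \emph{lower} bound on the potential of every proper subset (of the form $\rho_G(R)\ge \rho(G)+3$, improvable under extra hypotheses, as in Lemmas~\ref{R1}--\ref{R3} of this paper); this is perfectly consistent on its own and cannot by itself contradict $\rho(G)\ge 3$ --- note that in $K_4$, and in every $4$-Ore graph, all proper subsets have large potential while the whole graph has potential $2$. There is no ``local analysis of small subsets'' that closes the argument, because $V(G)$ is not a proper subset and the subset bounds never touch it. The missing second phase is the one occupying the analogues of Sections 4--6 here and the bulk of \cite{KY2}: a \emph{different} reduction (delete a degree-$3$ vertex and identify two of its nonadjacent neighbors) whose critical extension, combined with the subset-potential bounds from phase one, forces the subgraph induced by degree-$3$ vertices to be highly restricted; only then does a degree-sum/discharging argument convert that structural information into $2|E(G)|\ge \tfrac{10n-4}{3}$. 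Without that phase your outline establishes useful lemmas about proper subsets but never derives a contradiction, so the proof as proposed does not go through.
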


We prove a similar but more complicated theorem which yields a short proof of Theorem~\ref{Main}. If $G$ is a graph, let $D_3(G)$ denote the graph induced by vertices of degree at most three in $G$.

\begin{theorem}\label{Ore4Better}
If $G$ is a $4$-critical graph on $n$ vertices, then 

$$|E(G)| \ge 1.6n + .2\alpha(D_3(G))-.6$$
\end{theorem}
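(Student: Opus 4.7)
The approach is to strengthen Kostochka--Yancey's proof of Theorem~\ref{Ore4} by introducing a potential that tracks the independence number of $D_3(G)$. Set
$$\phi(G) \;=\; 5|E(G)| - 8|V(G)| + 3 - \alpha(D_3(G)),$$
so the theorem is equivalent to $\phi(G) \ge 0$ for every $4$-critical graph $G$. Both ``extremal'' graphs produce equality: for $K_4$, $\phi = 30-32+3-1 = 0$; and for $H_7$ (which has $n=7$, $|E|=11$, and whose $D_3$ is two triangles joined by an edge, so $\alpha(D_3)=2$), $\phi = 55-56+3-2 = 0$. So the inequality is tight and any successful proof must be delicate.

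The plan is strong induction on $|V(G)|$. Suppose $G$ is a minimum counterexample with $\phi(G) \le -1$, and split into two cases.

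\textbf{Case A: $G$ admits a $2$-separation.} By Proposition~\ref{OreSep}, $G$ is the Ore-composition of smaller $4$-critical graphs $G_1,G_2$. A direct computation using $|E(G)| = |E(G_1)| + |E(G_2)| - 1$ and $|V(G)| = |V(G_1)| + |V(G_2)| - 1$ shows that the ``linear'' part $5|E(\cdot)| - 8|V(\cdot)| + 3$ is exactly additive across an Ore-composition. For the $\alpha$-term, I would show that $\alpha(D_3(G)) \le \alpha(D_3(G_1)) + \alpha(D_3(G_2))$ by combining a maximum independent set in $D_3(G)$ with the structure at the Ore-composition: most degree-3 vertices persist in both pieces, and vertices whose degree changed across the composition can be absorbed with a careful exchange. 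Then the inductive hypothesis on $G_1,G_2$ yields $\phi(G) \ge 0$.

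\textbf{Case B: $G$ is essentially $3$-connected.} Here I would emulate Kostochka--Yancey's identification reduction. Fix a maximum independent set $I$ of $D_3(G)$. If $I = \emptyset$, then $\alpha(D_3(G))=0$ and Theorem~\ref{Ore4} already gives $|E(G)| \ge (5n-2)/3 \ge 1.6n - 0.6$ for all $n\ge 1$. Otherwise, pick $v \in I$; since $G \ne K_4$, $v$ has two non-adjacent neighbors $a,b$. Form $G'$ by deleting $v$ and identifying $a$ with $b$: any proper $3$-coloring of $G'$ extends to $G$, so $G'$ is not $3$-colorable and contains a $4$-critical subgraph $G''$. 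Then $|V(G'')| \le n-2$, the edge count drops in a controlled way, and the inductive hypothesis $\phi(G'') \ge 0$ combined with a careful accounting of $\alpha(D_3(G'')) - \alpha(D_3(G))$ should give $\phi(G) \ge 0$, a contradiction.

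The main obstacle I expect is the $\alpha(D_3(\cdot))$ bookkeeping across the reductions. Under identification, degrees shift unpredictably: a degree-$3$ neighbor of $v$ can leave $D_3$ (if it gains an edge) or keep its degree (if $\{a,b\}$ are not both in its neighborhood), and vertices of $I\setminus\{v\}$ may no longer form an independent set in $D_3(G'')$. The crux of the proof will be choosing $v$ and the non-adjacent pair $\{a,b\}$ so that the loss in $\alpha(D_3)$ under the reduction is at most $1$ (matched by the gain of $v$ leaving), and so that no other member of $I$ is damaged. This likely requires an exchange lemma for maximum independent sets of $D_3(G)$ together with a refined choice of $v$ in a ``boundary'' position of $I$. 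Balancing the at-most-$1$ loss in $\alpha$ against the gain of at least $5 \cdot (\text{edges deleted})-8 \cdot (\text{vertices deleted})$ is the heart of the argument, and is where the precise constant $0.2$ in the statement comes from.
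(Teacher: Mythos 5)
Your potential $\phi(G)=5|E(G)|-8|V(G)|+3-\alpha(D_3(G))$ is (up to scaling) exactly the paper's potential, and your two reductions — Ore-composition at a $2$-separation, and deletion of a degree-three vertex followed by identification of two non-adjacent neighbors — are the right ones. But what you have written is a plan, not a proof, and the two places you wave at are precisely where the real work lives; moreover, as set up, your induction cannot close.

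First, in Case B the graph $G''$ is a $4$-critical \emph{subgraph} of $G'$ and need not be spanning, so there is no direct comparison between $\phi(G)$ and $\phi(G'')$: the vertices and edges of $G$ missed by the expansion of $G''$ are completely uncontrolled. Already in Kostochka and Yancey's proof of the plain bound this forces one to define the potential of arbitrary subsets $R\subsetneq V(G)$, to re-extend $R$ by a ``critical extension'' (Lemma~\ref{ExtForm}), and to prove that every proper subset of a minimal counterexample has large potential (Lemmas~\ref{R0}--\ref{R3}); none of this appears in your sketch. Second, the induction hypothesis $\phi\ge 0$ is too weak. The paper must prove the stronger Theorem~\ref{Ore4Best}, namely $p(G)\le 1.8$ for $4$-Ore graphs but $p(G)\le 1.2$ otherwise, because the extension inequalities only produce a contradiction when the extender or reduced graph that is not $4$-Ore is known to have the extra $0.6$ of slack (see, e.g., the case analysis in Lemma~\ref{IdentifiablePair}). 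With only $\phi\ge 0$ the composition and extension bounds return $\phi(G)\ge 0$ with equality possible, and no contradiction is reached. Third, your hope of choosing $v$ and $\{a,b\}$ so that $\alpha(D_3)$ drops by at most $1$ is not how the $\alpha$-term can be controlled, and I do not see how to make it work: a single identification can destroy the independence of many members of $I$ at once. The paper instead first excludes identifiable pairs (hence collapsible sets, diamonds, and cycles in $D_3(G_0)$), then proves that every component of $D_3(G_0)$ is a tree on at most $10$ vertices whose degree-four neighbors sit in special ``good'' configurations (Lemma~\ref{Components}), and finally runs an iterated discharging argument that delivers $.4\alpha(H)$ to each component $H$, using $\alpha(H)\le\frac{2}{3}|V(H)|+\frac{1}{3}$ for trees (Proposition~\ref{IndTree}). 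That global accounting, not a local exchange at one vertex, is where the constant $0.2$ comes from.
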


Indeed we prove the following stronger theorem which shows that equality holds only if $G$ is a $4$-Ore graph:

\begin{theorem}\label{Ore4Best}
If $G$ is a $4$-critical graph on $n$ vertices, then $|E(G)| \ge 1.6n + .2\alpha(D_3(G))-.6$ if $G$ is a $4$-Ore graph and $|E(G)| \ge 1.6n + .2\alpha(D_3(G))-.4$ otherwise.
\end{theorem}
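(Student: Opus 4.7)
The plan is to follow the Kostochka--Yancey framework for Theorem~\ref{Ore4}, strengthening their potential function to incorporate the independence number of the low-degree subgraph. Rewriting the desired inequalities, it suffices to prove that
\[
\Phi(G) := 8|V(G)| + \alpha(D_3(G)) - 5|E(G)| \le 3
\]
for every $4$-critical graph $G$, with the improvement $\Phi(G) \le 2$ whenever $G$ is not a $4$-Ore graph.

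I would first verify the tight bound $\Phi(G) = 3$ for every $4$-Ore graph by induction on its Ore-composition tree. The base case $\Phi(K_4) = 32 + 1 - 30 = 3$ is immediate. For an Ore-composition $G$ of two $4$-Ore graphs $G_1, G_2$, the identities $|V(G)| = |V(G_1)|+|V(G_2)|-1$ and $|E(G)| = |E(G_1)|+|E(G_2)|-1$ give
\[
8|V(G)|-5|E(G)| = \bigl(8|V(G_1)|-5|E(G_1)|\bigr) + \bigl(8|V(G_2)|-5|E(G_2)|\bigr) - 3.
\]
A case analysis on which of the four identified vertices lie in $D_3(G)$, and on how maximum independent sets of $D_3(G_1)$ and $D_3(G_2)$ combine across the identification, then gives the required equality for $\Phi$.

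For the harder direction, let $G$ be a minimum counterexample: $G$ is $4$-critical, not $4$-Ore, and $\Phi(G) \ge 3$. By Proposition~\ref{OreSep}, if $G$ admits a $2$-separation then $G$ is the Ore-composition of two $4$-critical graphs $G_1, G_2$; applying induction to each, together with the identity above for $8|V|-5|E|$, forces $\Phi(G) \le 2$ unless both $G_1$ and $G_2$ are $4$-Ore, in which case $G$ itself is $4$-Ore, contradicting our assumption. Hence $G$ has no $2$-separation. Having ruled out separations, the remainder of the argument mirrors the reduction scheme in the short Kostochka--Yancey proof of Theorem~\ref{Ore4}: for each reducible configuration --- a small subgraph whose identification or deletion yields a smaller $4$-critical graph $G'$ --- one exhibits such a reduction and uses the inductive bound on $\Phi(G')$ to force $\Phi(G) \le 2$, a contradiction.

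The principal obstacle is controlling $\alpha(D_3)$ under these reductions. Unlike the linear quantity $8|V|-5|E|$, the independence number of $D_3$ can change unpredictably when vertices cross the degree-three threshold or when the low-degree subgraph is altered by identifications. My plan is to carry a witness throughout: fix a maximum independent set $I \subseteq V(D_3(G))$ and design each reduction so that the image of $I$ in $G'$ remains independent in $D_3(G')$, losing only a controlled number of elements. The arithmetic slack of one between the Ore bound $3$ and the non-Ore bound $2$ provides just enough room to absorb the typical one-vertex loss incurred when identifying a member of $I$ with a neighbor. The trickiest cases will be those in which every natural reduction would destroy a large portion of $I$, forcing one to tailor the reduction to the independent-set structure rather than purely to the local subgraph.
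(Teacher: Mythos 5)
Your reformulation $\Phi(G)=8|V(G)|+\alpha(D_3(G))-5|E(G)|$ is exactly the paper's potential up to the scale factor $0.6$, and your treatment of the $4$-Ore base case and of $2$-separations matches the paper (Lemma~\ref{PotentialOreComp}, Theorem~\ref{OreComp}, Lemma~\ref{2Sep2}); one small caveat is that you should only claim $\Phi\le 3$ for $4$-Ore graphs, not equality --- the subadditivity $\alpha(D_3(G))\le\alpha(D_3(G_1))+\alpha(D_3(G_2))$ can be strict, and only the inequality is needed. The genuine gap is everything after that. You write that "the remainder of the argument mirrors the reduction scheme in the short Kostochka--Yancey proof," with the $\alpha$-term controlled by carrying a fixed maximum independent set $I$ through the reductions. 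This is a restatement of the difficulty, not a resolution, and it does not work as described for two concrete reasons. First, the direction of the bookkeeping is wrong in the tight cases: the reductions of the paper (critical extensions, identifiable pairs, degree-three reductions) replace part of $G$ by a smaller $4$-critical graph $W$, and the potential calculation needs $\alpha(D_3(G))\le\alpha(D_3(R))+\alpha(D_3(W))$ \emph{improved by $1$} exactly when $W$ is $4$-Ore of maximum potential and two degree-three vertices of $G$ are identified into a degree-four vertex $w$ of $W$. Saving that $1$ requires knowing that \emph{every} maximum independent set of $D_3(W)$ meets $\overline{N(w)}$ --- a structural statement about the smaller graph $W$ (Lemma~\ref{Ind}), proved by a separate induction on Ore-compositions. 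A witness $I$ chosen in $G$ gives no information about maximum independent sets of $D_3(W)$, so "tailoring the reduction to the independent-set structure" cannot supply this.

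Second, the reduction scheme does not terminate in a contradiction the way it does for Kostochka--Yancey's Theorem~\ref{Ore4}. In their proof the reductions force every component of $D_3$ of a minimal counterexample to be a single vertex or an edge, after which a trivial count finishes. With the extra $\alpha(D_3(G))$ term the same reductions only yield that components of $D_3(G_0)$ are trees with at most $10$ vertices whose degree-$\ge 4$ neighbors lie in "uncollapsible splits" of $4$-Ore subgraphs (Lemmas~\ref{Uncoll3} and~\ref{Components}); this is not yet a contradiction. The paper must then switch to a global, \emph{iterated} discharging argument (Section 6) in which charge is pushed out of arbitrarily nested $4$-Ore gadgets toward the tree components, and the final count uses both $|V(H)|\le 10$ and the bound $\alpha(H)\le\frac{2|V(H)|+1}{3}$ for trees of maximum degree three. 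None of this machinery --- collapsible and cocollapsible sets, uncollapsible splits, the component-size bound, or the iterated discharging --- is anticipated by your outline, and without it the induction you propose has no closing step.
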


The proof itself is also of interest for a few reasons. One, we modify the potential developed by Kostochka and Yancey to include the indepence number of $D_3(G)$; this raises the question whether similar improvements are possible for general $k$. Two, we introduce some new theory for $k$-Ore graphs which is also useful for other results in this area. Three, we use an iterated discharging rule, that is, a discharging which make take an arbitrary number of steps. This is one of the only examples of this more complicated discharging. Furthermore, we use the iterated discharging not to move charge along an arbitrarily long path but rather to force charge outward from an arbitrarily nested struture.

\subsection{Organization of the Paper}

The paper is organized as follows. The proof of Theorem~\ref{Ore4Best} comprises Sections 2-6. The proof is similar to that of Kostochka and Yancey for $4$-critical graphs and yet more complicated and intricate.

In Section 2, we modify the potential of Kostochka and Yancey to incorporate indepenent sets in $D_3(G)$. We then prove similar lemmas about this new potential when used in the key reduction from Kostochka and Yancey's proof. Finally, we develop a theory of 'collapsible' subsets which happen to have the least possible potential in a minimum counterexample. 

In Section 3, we briefly develop some straightforward bounds on the potential of proper subsets of a minimum counterexample. 
In Section 4, we improve upon these straightforward bounds by excluding an 'identifiable pair'. This has many corollaries, in particular, that every component of $D_3(G)$ is acyclic. 
In Section 5, we continue to follow Kostochka and Yancey's proof by utilizing a reduction that identifies the neighbors of degree three vertices. Unlike Kostochka and Yancey though, who showed the components of $D_3(G)$ for their minimal counterexample are either a vertex or edge, we show the components of $D_3(G)$ are small, having size at most 10.
In Section 6, we use discharging to finish the proof of Theorem~\ref{Ore4Best}. Indeed we will need an iterated discharging rule to send charge out from the arbitrarily nested structures that arise in Section 5, which we call 'gadgets', and toward the components of $D_3(G)$.

In Section 7, we use Theorem~\ref{Ore4Best} to provide a short proof of Theorem~\ref{Main}. In Section 8, we discuss a few open questions.

\section{Potential, Critical Extensions and Collapsible Subsets}

We now update the potential notion developed by Kostochka and Yancey for $4$-critical graphs to work with independent sets of vertices of degree three.

\begin{definition}
Let $G$ be a graph. We let $D_3(G)$ denote the graph induced by the vertices of degree at most three in $G$. We define the \emph{potential} of subset $R$ of $V(G)$, denoted by $p(R)$, as follows:

$$p(R) = 4.8|R| - 3|E(G[R])| + .6\alpha(G[D_3(G)\cap R])$$

We define the potential $p(G)$ of $G$ to be $p(V(G))$. Similarly we define $P(G)=\min_{H\subseteq G} p(H)$.
\end{definition}

Note that $p(K_1)=5.4$, $p(K_2)=7.2$, $p(K_3)=6$ and $p(K_4)=1.8$. Let $H_7$ be the unique graph that is the Ore composition of two copies of $K_4$. Note $p(H_7)=1.8$.

\subsection{Potential of $4$-Ore Graphs}

We now characterize the potential of $4$-Ore graphs. Note the following:

\begin{proposition}\label{OreDeg4}
If a $4$-critical graph $G$ is the Ore-composition of edge-side $G_1$ and split side $G_2$ with replaced edge $xy$, then at least one of $x$ or $y$ has degree at least four in $G$.
\end{proposition}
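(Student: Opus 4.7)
The plan is to argue by contradiction using only the minimum degree condition inherited from $4$-criticality plus the degree accounting built into the definition of Ore-composition. Since $G$ is $4$-critical, every vertex has degree at least three, so if the conclusion fails we must have $d_G(x)=d_G(y)=3$. The goal will be to derive from this that the split vertex $z$ has degree only $2$ in $G_2$, contradicting that $G_2$ is itself $4$-critical.

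First, I would observe that $\{x,y\}$ is a $2$-separation of $G$: the vertex set of $G_1-xy$ and the vertex set of $G_2$ (with $z_1,z_2$ identified back to $x,y$) meet exactly in $\{x,y\}$. By Proposition~\ref{OreSep}, the two sides of such a separation in a $4$-critical graph assemble into $4$-critical constituents, and unpacking the construction shows these constituents are precisely $G_1$ and $G_2$. In particular $G_1$ and $G_2$ have minimum degree at least three, so $d_{G_1}(x),d_{G_1}(y)\ge 3$ and $d_{G_2}(z)\ge 3$.

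Next I would read off the degree identity supplied by the Ore-composition: since $x$ is identified with $z_1$ and the edge $xy$ is deleted from $G_1$,
\[
d_G(x)=d_{G_1}(x)-1+d_{G_2}(z_1),\qquad d_G(y)=d_{G_1}(y)-1+d_{G_2}(z_2).
\]
Because $d_{G_1}(x),d_{G_1}(y)\ge 3$ and $d_{G_2}(z_1),d_{G_2}(z_2)\ge 1$ (the latter is part of the definition of a valid split), each equation gives $d_G(x),d_G(y)\ge 3$, and the assumed equalities $d_G(x)=d_G(y)=3$ force $d_{G_2}(z_1)=d_{G_2}(z_2)=1$.

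The conclusion is then immediate: $d_{G_2}(z)=d_{G_2}(z_1)+d_{G_2}(z_2)=2$, contradicting $d_{G_2}(z)\ge 3$. There is no real obstacle here; the only delicate point is making sure one is entitled to invoke Proposition~\ref{OreSep} to upgrade the generic Ore-composition to one with $4$-critical factors, so that the bounds $d_{G_1}(x),d_{G_1}(y)\ge 3$ and $d_{G_2}(z)\ge 3$ are available. Everything else is degree bookkeeping.
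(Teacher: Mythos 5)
Your argument is correct and is essentially the paper's proof run in the contrapositive: both rest on $d_{G_1}(x),d_{G_1}(y)\ge 3$, $d_{G_2}(z)\ge 3$, and the degree identity $d_G(x)+d_G(y)=d_{G_1}(x)+d_{G_1}(y)-2+d_{G_2}(z)$, the paper concluding directly that some $z_i$ has degree at least two in $G_2$ while you assume both $x,y$ have degree three and derive $d_{G_2}(z)=2$. The detour through Proposition~\ref{OreSep} to recover $4$-criticality of $G_1$ and $G_2$ is harmless but unnecessary here, since the proposition is only ever applied to compositions whose factors are already known to be $4$-critical, and that is what the paper's own proof tacitly assumes.
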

\begin{proof}
Since $G_1$ is $4$-critical, $x$ and $y$ have degree at least three in $G_1$. Since $G_2$ is $4$-critical, its split vertex $z$ has degree at least three in $G_2$. Hence at least one of $x$ or $y$ are adjacent to two vertices in $V(G_2)\setminus z$ and so has degree at least four in $G$.
\end{proof}

\begin{lemma}\label{PotentialOreComp}
If $G$ is an Ore composition of $G_1$ and $G_2$, then $p(G)\le p(G_1)+p(G_2) - 1.8$.
\end{lemma}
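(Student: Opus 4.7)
My plan is to decompose $p(G) - p(G_1) - p(G_2)$ into the vertex--edge linear part, which accounts for exactly $-1.8$, and the independence-number part, which requires a combinatorial argument. For the linear part, by unpacking the Ore composition definition, $|V(G)| = |V(G_1)| + |V(G_2)| - 1$ (splitting $z$ adds one vertex, and identifying $x = z_1$, $y = z_2$ then removes two) and $|E(G)| = |E(G_1)| + |E(G_2)| - 1$ (we delete the edge $xy$ while the splitting preserves all other edges). Substituting these into $4.8|V(G)| - 3|E(G)|$ yields
\[
4.8|V(G)| - 3|E(G)| = \bigl(4.8|V(G_1)| - 3|E(G_1)|\bigr) + \bigl(4.8|V(G_2)| - 3|E(G_2)|\bigr) - 1.8.
\]
So the desired inequality reduces to the combinatorial statement $\alpha(D_3(G)) \le \alpha(D_3(G_1)) + \alpha(D_3(G_2))$.

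To establish this, I would take a maximum independent set $I \subseteq D_3(G)$ and use the natural partition $I_1 := I \cap V(G_1)$ and $I_2 := I \cap (V(G_2) \setminus \{z\})$ (with $x = z_1$, $y = z_2$), so that $|I_1| + |I_2| = |I|$. The key observation is a degree monotonicity: for any $v \notin \{x, y, z\}$, the degree of $v$ in $G$ equals its degree in whichever $G_j$ contains it, while $d_G(x) = d_{G_1}(x) - 1 + d_{z_1} \ge d_{G_1}(x)$ since $d_{z_1} \ge 1$ (and similarly for $y$). Hence $I_1 \subseteq D_3(G_1)$ and $I_2 \subseteq D_3(G_2)$. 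Moreover $I_2$ is automatically independent in $G_2$ (the split adds no edges within $V(G_2) \setminus \{z\}$), and $I_1$ is independent in $G_1$ as long as $\{x, y\} \not\subseteq I$, since the only edge of $G_1$ missing from $G$ on $V(G_1)$ is precisely $xy$.

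The main obstacle is the remaining case $\{x, y\} \subseteq I$. I would dispose of it by invoking Proposition~\ref{OreDeg4}, which applies in the intended $4$-critical setting of the lemma (guaranteed by Proposition~\ref{OreSep}, so that both $G_1$ and $G_2$ are $4$-critical): at least one of $x, y$ has degree at least $4$ in $G$, so at most one of them can lie in $D_3(G) \supseteq I$. Hence $\{x, y\} \subseteq I$ cannot occur, the natural partition gives $|I_1| + |I_2| = |I| \le \alpha(D_3(G_1)) + \alpha(D_3(G_2))$, and the proof is complete. The critical hypothesis is genuinely needed here---in its absence, the degree-sum lower bound underlying Proposition~\ref{OreDeg4} breaks and both $x, y$ can simultaneously land in $D_3(G)$ without any compensating contribution to $\alpha(D_3(G_2))$ (for instance $z$ need not lie in $D_3(G_2)$).
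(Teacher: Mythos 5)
Your proposal is correct and follows essentially the same route as the paper's proof: the counts $|V(G)|=|V(G_1)|+|V(G_2)|-1$ and $|E(G)|=|E(G_1)|+|E(G_2)|-1$ account for the $-1.8$, and Proposition~\ref{OreDeg4} is used to rule out both $x$ and $y$ lying in a maximum independent set of $D_3(G)$, giving $\alpha(D_3(G))\le\alpha(D_3(G_1))+\alpha(D_3(G_2))$. You simply spell out the degree-monotonicity details that the paper leaves implicit.
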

\begin{proof}
Let $G_1$ be the edge-side of the composition with replaced edge $xy$. Let $G_2$ be the split side of the composition with split vertex $z$. Note that $|E(G)|=|E(G_1)|+|E(G_2)|-1$ and $|V(G)|=|V(G_1)|+|V(G_2)|-1$. Let $I$ be a maximum independent set of $D_3(G)$. Since at most one of $x,y$ is degree three in $G$ by Proposition~\ref{OreDeg4}, it follows that $I\cap V(G_1)$ is an independent set in $D_3(G_1)$. Furthermore, $I\setminus V(G_1)$ is an independent set in $D_3(G_2)$. Thus $\alpha(D_3(G))\le \alpha(D_3(G_1)) + \alpha(D_3(G_2))$. Combining these calculations, we find that $p(G)\le p(G_1)+p(G_2)-4.8+3 = p(G_1)+p(G_2)-1.8$.
\end{proof}

\begin{theorem}\label{OreComp}
If $G$ is $4$-Ore, then $p(G)\le 1.8$.
\end{theorem}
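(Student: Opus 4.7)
The plan is a short induction on the number of Ore-compositions used to construct $G$, using Lemma~\ref{PotentialOreComp} as the inductive tool. First I would verify the base case $G = K_4$ by direct computation: $p(K_4) = 4.8 \cdot 4 - 3 \cdot 6 + 0.6 \cdot \alpha(K_4 \cap D_3(K_4)) = 19.2 - 18 + 0.6 = 1.8$, since in $K_4$ every vertex has degree $3$, so $D_3(K_4) = K_4$ and $\alpha(K_4)=1$. This matches the claimed bound with equality.

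For the inductive step, suppose $G$ is a $4$-Ore graph that is not $K_4$. By the definition of a $4$-Ore graph, $G$ is the Ore-composition of two $4$-Ore graphs $G_1$ and $G_2$, each with strictly fewer vertices than $G$. By the induction hypothesis, $p(G_1) \le 1.8$ and $p(G_2) \le 1.8$. Applying Lemma~\ref{PotentialOreComp},
\[
p(G) \le p(G_1) + p(G_2) - 1.8 \le 1.8 + 1.8 - 1.8 = 1.8,
\]
which completes the induction.

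There is no real obstacle here since all the work is absorbed into Lemma~\ref{PotentialOreComp}; the only thing to be careful about is that the inductive decomposition is legitimate, i.e.\ that both factors of a nontrivial Ore-composition of a $4$-Ore graph are themselves $4$-Ore with strictly smaller vertex count. This is immediate from the definition of $4$-Ore (the class is generated from $K_4$ by Ore-composition, and each piece of a nontrivial Ore-composition has fewer vertices than the composed graph, since the split vertex is replaced by two vertices of positive degree that are then identified with two distinct vertices on the edge-side). With that observation in hand the induction goes through cleanly, and one sees that equality is achieved throughout the construction, consistent with the remark that $p(H_7) = 1.8$ as well.
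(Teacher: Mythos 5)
Your proof is correct and follows essentially the same route as the paper: induction on the number of vertices, with base case $p(K_4)=1.8$ and the inductive step supplied entirely by Lemma~\ref{PotentialOreComp}. The base-case computation and the observation that both factors of a nontrivial Ore-composition are smaller $4$-Ore graphs are exactly what the paper uses.
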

\begin{proof}
We proceed by induction on the number of vertices in $G$. Note that $p(K_4)=1.8$. So we may suppose that $G$ is not isomorphic to $K_4$. Thus $G$ is the Ore composition of two smaller $4$-Ore graphs $G_1$ and $G_2$. By induction, $p(G_1),p(G_2)\le 1.8$. By Lemma~\ref{PotentialOreComp}, $p(G)\le p(G_1)+p(G_2)-1.8 \le 1.8+1.8-1.8 = 1.8$ as desired.
\end{proof}

Later on, we will need the following structural lemma about $4$-Ore graphs whose potential is maximum:

\begin{lemma}\label	{Ind}
If $G$ be a $4$-Ore graph such that $p(G)=1.8$ and $v\in V(G)$, then every maximum independent set of $D_3(G)$ intersects $\bar{N(v)}$.
\end{lemma}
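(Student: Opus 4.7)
The plan is to proceed by induction on $|V(G)|$. For the base case $G = K_4$, the claim is verified directly: $\alpha(D_3(K_4)) = 1$ and $\overline{N(v)}$ meets every nonempty subset of $V(K_4)$. For the inductive step, since $G \neq K_4$ is $4$-Ore, $G$ is the Ore composition of smaller $4$-Ore graphs $G_1$ (edge-side, replaced edge $xy$) and $G_2$ (split-side, split vertex $z$). Combining Theorem~\ref{OreComp} with Lemma~\ref{PotentialOreComp}, the hypothesis $p(G)=1.8$ forces $p(G_1) = p(G_2) = 1.8$ together with (from the equality case of Lemma~\ref{PotentialOreComp}) the additivity $\alpha(D_3(G)) = \alpha(D_3(G_1)) + \alpha(D_3(G_2))$. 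Thus the inductive hypothesis applies to both $G_1$ and $G_2$.

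Fix $v \in V(G)$ and a maximum independent set $I$ of $D_3(G)$. By Proposition~\ref{OreDeg4}, at most one of $x, y$ lies in $I$; by symmetry I assume $y \notin I$. Set $I_1 := I \cap V(G_1)$ and $I_2 := I \cap (V(G_2) \setminus \{z\}) = I \setminus I_1$. A routine check (using $d_{G_1}(u) \leq d_G(u)$ for $u \in V(G_1)$ and $d_{G_2}(u) = d_G(u)$ for $u \in V(G_2) \setminus \{z\}$) shows that $I_1$ is an independent set of $D_3(G_1)$ and $I_2$ is an independent set of $D_3(G_2)$; the only edge-discrepancy between $G_1$ and $G$ on $V(G_1)$ is $xy$, which is harmless since $y \notin I_1$. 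From $|I| = |I_1| + |I_2|$ together with the additivity of $\alpha$, each of $I_1, I_2$ must be a maximum independent set of its respective $D_3$-subgraph.

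Now split according to the position of $v$. If $v \in V(G_1) \setminus \{y\}$, induction on $G_1$ at $v$ furnishes a witness in $I_1$; the bookkeeping (using $y \notin I_1$ and that the unique edge-discrepancy is $xy$) shows the witness remains valid in $G$. The case $v \in V(G_2) \setminus \{z\}$ is analogous via $G_2$-induction at $v$, noting that $z \notin I_2$ absorbs the corresponding edge-discrepancy.

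The main obstacle is the case $v = y$: induction on $G_1$ at $y$ may return only $u = x$ (whenever $x \in I_1$), and $x$ is a $G_1$-neighbor but not a $G$-neighbor of $y$. To circumvent this I apply induction on $G_2$ at the split vertex $z$ instead. The crucial observation is that if $x \in I$, then the independence of $I$ in $G$ forbids any $G_2$-neighbor of $z_1$ (each of which is adjacent to $x$ in $G$) from lying in $I_2$; the witness delivered by the $G_2$-induction is therefore forced onto the $z_2$-side of the split, which corresponds to a genuine $G$-neighbor of $y$, completing the argument.
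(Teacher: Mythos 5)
Your proof is correct and follows essentially the same route as the paper's: induct on $|V(G)|$, use the equality case of Lemma~\ref{PotentialOreComp} to get that $I_1$ and $I_2$ are maximum, handle $v$ according to which side of the composition it lies on, and in the problematic case where $v$ is an endpoint of the replaced edge whose only $G_1$-witness is the other endpoint, invoke the inductive hypothesis on $G_2$ at the split vertex $z$ and use the independence of $I$ to force the witness onto the correct side of the split. The only cosmetic difference is your choice of normalization ($y\notin I$, $v=y$) versus the paper's ($v=x$, $y\in I$), which are the same situation up to relabeling.
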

\begin{proof}
We proceed by induction on vertices. Let $v\in V(G)$ and let $I$ be a maximum independent set in $D_3(G)$. If $G=K_4$, then $I\cap \bar{N(v)} \ne \emptyset$ as desired since $\bar{N(v)}=V(G)$ for all $v\in V(K_4)$. 

So we may assume that $G$ is the Ore-composition of two $4$-Ore graphs. Let $G_1$ be the edge-side of this composition with replaced edge $xy$ and let $G_2$ be the split side with split vertex $z$. As $p(G)=1.8$, it follows from Lemma~\ref{PotentialOreComp} that $p(G_1)=p(G_2)=1.8$. Since equality holds throughout it follows that $\alpha(D_3(G))=\alpha(D_3(G_1))+\alpha(D_3(G_2))$. This in turn implies that $I_1=I\cap V(G_1)$ is a maximum independent set in $D_3(G_1)$ and $I_2=I\setminus V(G_1)$ is a maximum independent set in $D_3(G_2)$.

Now we consider two cases. First suppose $v\in V(G_2)\setminus \{z\}$. By induction $I_2$ intersects $\bar{N_{G_2}(v)}$ and hence $I$ intersects $\bar{N_G(v)}$ as desired. So we may assume that $v\in V(G_1)$. By induction, $I_1$ intersects $\bar{N_{G_1}(v)}$. This would imply that $I$ intersects $\bar{N_G(v)}$ as desired unless $v\in \{x,y\}$ and the lone vertex of $I$ in $N_{G_1}(v)$ is the other vertex in $\{x,y\}$. 

Without loss of generality suppose that $v=x$ and that $y\in I$. On the other hand, by induction $I_2$ intersects $\bar{N_{G_2}(z)}$. Let $w\in I_2\cap \bar{N_{G_2}(z)}$. Note that $w\ne z$ since $I_2$ is a subset of $V(G_2)\setminus z$. Thus $w$ is adjacent to one of $x$ and $y$. Since $I$ is an independent set, $w$ is not adjacent to $y$. Thus $w$ is adjacent to $x$. So $I$ intersects $\bar{N_G(v)}$ as desired.    
\end{proof}

\subsection{Critical Extensions}

We will also need the following identification of Kostochka and Yancey to drive the induction.

\begin{definition}
If $R\subsetneq V(G)$ with $|R|\ge 4$, and $\phi$ is a $3$-coloring of $G[R]$, we define the \emph{$\phi$-identification of $R$ in $G$}, denoted $G_{\phi}(R)$, to be the graph obtained by identifying the vertices colored $i$ in $R$ to a vertex $x_i$ for each $i\in\{1,2,3\}$, adding the edges $x_1x_2,x_1x_3,x_2x_3$ and then deleting parallel edges. 
\end{definition}

\begin{proposition}\label{Phi}
If $G$ is $4$-critical, $R\subsetneq V(G)$ with $|R|\ge 4$, and $\phi$ is a $3$-coloring of $G[R]$, then $\chi(G_{\phi}(R))\ge 4$.
\end{proposition}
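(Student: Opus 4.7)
The plan is to argue by contradiction: assume $\chi(G_\phi(R)) \le 3$ and glue a $3$-coloring of $G_\phi(R)$ with the given coloring $\phi$ to produce a proper $3$-coloring of $G$, contradicting the fact that $G$ is $4$-critical.

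So suppose $G_\phi(R)$ admits a proper $3$-coloring $\psi$. Since the definition of $G_\phi(R)$ explicitly adds the edges $x_1x_2$, $x_1x_3$, $x_2x_3$, the vertices $x_1,x_2,x_3$ form a triangle in $G_\phi(R)$ and so receive three distinct colors under $\psi$; after permuting the colors I may assume $\psi(x_i) = i$ for $i \in \{1,2,3\}$. Now define a candidate coloring $\psi' : V(G) \to \{1,2,3\}$ by setting $\psi'(v) = \phi(v)$ for $v \in R$ and $\psi'(v) = \psi(v)$ for $v \in V(G) \setminus R$. This is well-defined because $R \subsetneq V(G)$, and it is non-vacuous on each side by the hypothesis $|R| \ge 4 > 0$ and $R \ne V(G)$.

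The next step is to verify that $\psi'$ is a proper coloring of $G$. Edges inside $R$ are handled by the hypothesis that $\phi$ is a proper $3$-coloring of $G[R]$. Edges inside $V(G) \setminus R$ are edges of $G_\phi(R)$ too, and so $\psi$ (hence $\psi'$) colors them properly. The only edges requiring care are the crossing edges $uv \in E(G)$ with $u \in R$ and $v \in V(G) \setminus R$: by construction, such an edge becomes an edge $x_{\phi(u)} v$ in $G_\phi(R)$ (parallel edges are merged but not deleted outright), so $\psi(x_{\phi(u)}) \neq \psi(v)$, i.e.\ $\phi(u) \neq \psi(v)$, which translates to $\psi'(u) \neq \psi'(v)$. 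Thus $\psi'$ is a proper $3$-coloring of $G$, contradicting $\chi(G) \ge 4$ and completing the proof.

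There is no serious obstacle here; the content is entirely in setting up the identification so that the triangle $x_1x_2x_3$ in $G_\phi(R)$ forces $\psi$ to assign three distinct colors that can be relabeled to match $\phi$. The role of the hypothesis $|R| \ge 4$ is only to ensure the identification is a strict reduction (if $|R|=3$ and $G[R]$ were a triangle then $G_\phi(R) \cong G$ and the statement would be vacuous); the coloring-extension argument itself does not use it.
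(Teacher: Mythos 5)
Your proof is correct and is exactly the intended argument: the paper states Proposition~\ref{Phi} without proof (it is the standard identification lemma from Kostochka and Yancey), and the gluing of $\phi$ on $R$ with a $3$-coloring of $G_\phi(R)$ normalized so that $\psi(x_i)=i$ is the canonical way to establish it. Your closing remark is also right that $|R|\ge 4$ plays no role in the coloring argument itself.
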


\begin{definition}
Let $G$ be a $4$-critical graph, $R\subsetneq V(G)$ with $|R|\ge 4$ and $\phi$ be a $3$-coloring of $G[R]$. Now let $W$ be a $4$-critical subgraph of $G_{\phi}(R)$ and $T$ be the triangle corresponding to $R$ in $G$. Then we say that $R' = (W-T)\cup R$ is a \emph{critical extension} of $R$ with \emph{extender} $W$. We refer to $W\cap T$ as the \emph{core} of the extension.

If a vertex in $W-T$ has more neighbors in $R$ then in $X$ or there exists an edge in $G[V(W-T)]$ that is not in $W-T$, then we say that the extension is \emph{incomplete}. Otherwise, we say the extension is \emph{complete}. If $R'=V(G)$, then we say the extension is \emph{spanning}. A \emph{total} extension is an extension that is both complete and spanning.
\end{definition}

Note that - as $G$ is critical - every critical extension has a non-empty core. Here is a useful lemma about the potential of a critical extension.

\begin{lemma}\label{ExtForm}
Let $G$ be a $4$-critical graph, $R\subsetneq V(G)$ with $|R|\ge 4$ and $R'$ be a critical extension of $R$ with extender $W$. Then 

$$p(R') \le p(R) + p(W) - 4.8/6.6/5.4$$

if the core has size $1/2/3$, respectively. Furthermore, if the extension is incomplete, then $p(R')\le p(R) + p(W) -7.8$.
\end{lemma}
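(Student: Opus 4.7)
The plan is to unpack the potential on both sides of the inequality and track three contributions (vertices, edges, independence number) separately, then assemble. Set $c = |W\cap T| \in \{1,2,3\}$ for the size of the core and $t = |E(W[T])|$ for the number of triangle-edges of the extender, noting that $t \le \binom{c}{2}$. Since $R' = R \cup V(W-T)$ and $R \cap V(W-T) = \emptyset$, the vertex count contributes exactly $4.8c$ to $p(R) + p(W) - p(R')$.

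For the edge count I would partition $E(G[R'])$ into edges inside $R$, edges between $R$ and $V(W-T)$, and edges inside $V(W-T)$, and likewise partition $E(W)$ into the $t$ edges inside $W\cap T$, the edges between $W\cap T$ and $V(W-T)$, and the edges of $W-T$. Completeness of the extension says that each $v \in V(W-T)$ has the same number of neighbors in $R$ (in $G$) as in $W\cap T$ (in $W$) and that $E(G[V(W-T)]) = E(W-T)$; plugging in and cancelling yields $|E(G[R])| + |E(W)| - |E(G[R'])| = t$ in the complete case. In the incomplete case at least one of these counts fails in the direction of $G$ having at least one extra edge, so the same quantity is $\le t - 1$.

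For the independence-number contribution I would take a maximum independent set $I$ in $G[D_3(G)\cap R']$ and split $I = (I\cap R) \cup (I\setminus R)$. The first part is independent in $G[D_3(G)\cap R]$ by restriction. For the second, every $v \in I\setminus R$ lies in $V(W-T)\cap D_3(G)$; since identification ($G \to G_\phi(R)$) only merges neighbors and is therefore degree-nonincreasing, and since $W$ is a subgraph of $G_\phi(R)$, we get $\deg_W(v) \le \deg_G(v) \le 3$ and hence $v \in D_3(W)$. Moreover, any edge of $W$ with both endpoints in $V(W-T)$ is an honest edge of $G$, so $I\setminus R$ is independent in $W[D_3(W)]$. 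This shows $\alpha(G[D_3(G)\cap R']) \le \alpha(G[D_3(G)\cap R]) + \alpha(W[D_3(W)])$, so the $0.6$-weighted alpha contribution to $p(R)+p(W)-p(R')$ is nonnegative.

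Assembling the three pieces gives $p(R) + p(W) - p(R') \ge 4.8c - 3t$, improving to $\ge 4.8c - 3t + 3$ in the incomplete case; substituting $t \le \binom{c}{2}$ with $c \in \{1,2,3\}$ immediately yields the constants $4.8, 6.6, 5.4$ and the uniform $7.8$ for the incomplete case. The only really delicate step is the alpha one: I need to confirm that the degree-nonincreasing property survives both the identification and the passage to the subgraph $W$, which is precisely what certifies $v \in D_3(W)$ from $v \in V(W-T) \cap D_3(G)$. Everything else is a routine partition-and-count.
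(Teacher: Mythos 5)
Your proposal is correct and follows essentially the same route as the paper: the three-way decomposition into the vertex term ($4.8c$), the edge term bounded by the number of core edges ($\le \binom{c}{2}$, with a gain of $1$ edge when the extension is incomplete), and the subadditivity of the independence number of the degree-three subgraphs. Your extra care in verifying that identification is degree-nonincreasing (so that $V(W-T)\cap D_3(G)\subseteq D_3(W)$) and that edges of $W$ inside $V(W-T)$ are edges of $G$ fills in details the paper leaves implicit, but the argument is the same.
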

\begin{proof}
Note that $|V(R)|+|V(W)| - |V(R')| = |W\cap X|$ and $|E(R)|+|E(W)|-|E(R')|\le |E(G_{\phi}(R)[W\cap X])|$. Finally, $\alpha(D_3(G[R]))+\alpha(D_3(G[W])\ge\alpha(D_3(G[R']))$ as a maximum independent set in $D_3(G[R'])$ corresponds to the disjoint union of independent sets in $D_3(G[R])$ and $D_3(G[W])$.

Thus $p(R)+p(W)-p(R') \ge 4.8(|W\cap X|) - 3\binom{|W\cap X|}{2}$. If $|W\cap X|=1$, then this is $4.8(1)-3(0) = 4.8$. If $|W\cap X|=2$, then this is $4.8(2)-3(1)=6.6$. If $|W\cap X|=3$, this is $4.8(3)-3(3)=5.4$. Finally if the extension is incomplete, then $|E(R)|+|E(W)|-|E(R')|\le |E(G_{\phi}(R)[W\cap X])| + 1$ and hence an additional three is added.
\end{proof}

\subsection{Collapsible Sets}

We will now characterize the subsets whose critical extensions have core size exactly one.

\begin{definition}
Let $G$ be a graph and $R\subsetneq V(G)$ with $|R|\ge 2$. The \emph{boundary} of $R$ is the set of vertices in $R$ with neighbors in $G\setminus R$. If $G$ is $4$-critical, we say $R$ is \emph{collapsible} if in every $3$-coloring of $G[R]$ all vertices in the boundary of $R$ receive the same color. If $R$ is collapsible, then we define the \emph{critical complement} of $R$ to be the graph obtained by identifying the boundary of $R$ to one vertex $v$ and deleting the rest of $R$. We call $v$ the \emph{collapsed vertex} of $W$.

Note then that the boundary of $R$ is an independent set and for any $u,v$ in the boundary of $R$, $G[R]+uv$ contains a $4$-critical subgraph. We say a collapsible subset is \emph{tight} if for any $u,v$ in the boundary of $R$, $G[R]+uv$ is $4$-critical.
\end{definition}

\begin{proposition}
If $R$ is a collapsible subset of a $4$-critical graph $G$, then the critical complement $W$ of $R$ is $4$-critical.
\end{proposition}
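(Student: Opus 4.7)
The plan is to verify directly that $W$ satisfies both defining conditions of 4-criticality: $\chi(W)\ge 4$, and $\chi(H)\le 3$ for every proper subgraph $H$ of $W$. In both directions the mechanism is the same. A 3-coloring of $W$ lives on $(V(G)\setminus R)\cup\{v\}$, while a 3-coloring of $G[R]$ (which exists because $G[R]$ is a proper subgraph of the 4-critical graph $G$) lives on $R$; by collapsibility, the boundary of $R$ is monochromatic in any 3-coloring of $G[R]$, and after permuting colors we can match its common color to the color of $v$. These two colorings then glue along the boundary to a coloring of the appropriate (sub)graph of $G$.

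For $\chi(W)\ge 4$, I would argue by contradiction. Any 3-coloring $\psi$ of $W$ glues with a suitably permuted 3-coloring $\phi$ of $G[R]$ to produce a 3-coloring of $G$: each edge from a boundary vertex $b$ to some $u\in V(G)\setminus R$ descends from the edge $vu$ of $W$, where $\psi$ assigns $v$ and $u$ distinct colors; since the boundary inherits $v$'s color under $\phi$, the edge $bu$ is properly colored. This contradicts $\chi(G)=4$.

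For the proper-subgraph direction it suffices to treat edge and vertex deletions. If $e\in E(W)$ lies inside $V(G)\setminus R$, then $e\in E(G)$; a 3-coloring of $G-e$ restricts on $R$ to a 3-coloring of $G[R]$, so collapsibility yields a common boundary color, which I then assign to $v$ in $W-e$. If $e=vu$, let $B$ be the set of boundary vertices of $R$ adjacent to $u$ in $G$; picking $b\in B$ and a 3-coloring of $G-bu$, collapsibility again fixes a common boundary color $c$. The subtle point is that if $|B|\ge 2$, some other $b'\in B\setminus\{b\}$ remains adjacent to $u$, forcing the color of $u$ to differ from $c$, and this 3-coloring of $G-bu$ already properly colors $bu$ as well, yielding a 3-coloring of $G$ itself---contradiction. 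Hence $|B|=1$ and I extend the 3-coloring of $G-bu$ to a 3-coloring of $W-e$ by assigning $v$ the color $c$. Vertex deletions are easier: $W-v=G[V(G)\setminus R]$ is a proper induced subgraph of $G$, and for $v'\in V(G)\setminus R$ a 3-coloring of $G-v'$ is glued via collapsibility to a 3-coloring of $W-v'$.

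The main obstacle is the edge-deletion case $e=vu$: a priori several boundary vertices of $R$ might share $u$ as a neighbor in $G$, and one needs the subtle observation that 4-criticality of $G$ combined with collapsibility of $R$ actually forces $|B|=1$. Once this is established the rest of the argument is routine color-gluing powered by collapsibility.
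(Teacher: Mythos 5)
Your proof is correct and follows essentially the same route as the paper's: both halves reduce to gluing a $3$-coloring across the boundary of $R$ using collapsibility, with the collapsed vertex $v$ taking the common boundary color (the paper merely phrases it as a contradiction, splitting into ``$W$ is $3$-colorable'' and ``some $W-e$ is not $3$-colorable''). Your only divergence is the detour establishing $|B|=1$ for an edge $vu$ at the collapsed vertex; the claim is true but not needed, since assigning $v$ the common boundary color of a $3$-coloring of $G-bu$ already properly colors every surviving edge at $v$ in $W-vu$ regardless of $|B|$ --- the paper simply picks one representative edge $e'$ of $G$ corresponding to $e$ and descends.
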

\begin{proof}
Suppose not. Then either $W$ is $3$-colorable or there exists an edge $e\in E(W)$ such that $W-e$ is not $3$-colorable. 

First suppose that $W$ is $3$-colorable and let $\phi$ be a $3$-coloring of $W$. Let $\phi'$ be a $3$-coloring of $G[R]$. Note that as $R$ is collapsible, every vertex in $\partial R$ receives the same color in $\phi'$. Let $c$ be this color and let $x$ be the collapsed vertex of $W$. We may assume without loss of generality that $\phi(x)=c$ by permuting the colors of $\phi$ if necessary. But then $\phi\cup \phi'$ is a $3$-coloring of $G$, a contradiction. 

So we may suppose that there exists $e\in E(W)$ such that $W-e$ is not $3$-colorable. But then $e$ corresponds to an edge $e'$ in $G$. As $G$ is $4$-critical, $G-e'$ has a $3$-coloring $\phi$. However, $\phi$ induces a coloring of $R$ and hence every vertex of $\partial R$ receives the same color in $\phi$, call it $c$. Let $\phi'(x)=c$ where $x$ is the collapsed vertex of $W$ and $\phi'(v)=\phi(v)$ for every $v\in V(W)\setminus\{x\}$. Then $\phi'$ is a $3$-coloring of $W-e$, a contradiction.
\end{proof}

This implies that if $R$ is collapsible, then there is only one critical extension of $R$. Indeed, that extension is total, has core size one and the extender is the critical complement. Hence Lemma~\ref{ExtForm} applied to collapsible sets, yields the following characterization of their potential:

\begin{lemma}\label{CollPot}
If $R$ is collapsible subset of a $4$-critical graph $G$ and $W$ is the critical complement of $R$, then $p(R)\ge p(G)-p(W)+4.8$. 
\end{lemma}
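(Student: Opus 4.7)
The plan is to show that Lemma~\ref{CollPot} follows directly from Lemma~\ref{ExtForm} applied to a particular critical extension of $R$ determined by the collapsibility hypothesis. The whole content of the lemma is in identifying that extension; after that the inequality is bookkeeping.

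First I would fix any $3$-coloring $\phi$ of $G[R]$ (which exists because $G$ is $4$-critical and $R\subsetneq V(G)$) and form $G_{\phi}(R)$. Collapsibility of $R$ forces every boundary vertex to receive a common color $c$ under $\phi$, so in $G_{\phi}(R)$ the triangle vertex $x_c$ absorbs all edges from $V(G)\setminus R$ into $T=\{x_1,x_2,x_3\}$, while each of $x_{c'}$ and $x_{c''}$ has degree exactly $2$, its only neighbors being the two other triangle vertices. Since every $4$-critical graph has minimum degree at least $3$, no $4$-critical subgraph of $G_{\phi}(R)$ can contain $x_{c'}$ or $x_{c''}$. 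Moreover, such a $4$-critical subgraph must meet $T$, since otherwise it would be a subgraph of $G[V(G)\setminus R]$ and hence $3$-colorable, contradicting Proposition~\ref{Phi}.

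Consequently every extender of a critical extension of $R$ has core exactly $\{x_c\}$ and lies inside $G_{\phi}(R)[\{x_c\}\cup(V(G)\setminus R)]$, which, after identifying $x_c$ with the collapsed vertex, is precisely the critical complement of $R$. The preceding proposition guarantees that this critical complement is itself $4$-critical, so I may take the extender to be $W$ itself. The resulting critical extension is then total, giving $R'=(W-T)\cup R=V(G)$ and therefore $p(R')=p(G)$. Now Lemma~\ref{ExtForm} with core size $1$ yields $p(R')\le p(R)+p(W)-4.8$, and rearranging gives $p(R)\ge p(G)-p(W)+4.8$.

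There is no substantive obstacle: the only nontrivial step is the structural observation in the second paragraph that collapsibility pins down the core to the single vertex $x_c$ and the extender to the critical complement, after which Lemma~\ref{ExtForm} finishes the argument immediately.
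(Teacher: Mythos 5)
Your proof is correct and follows exactly the route the paper takes: the paper derives Lemma~\ref{CollPot} by observing that a collapsible $R$ has a unique critical extension, which is spanning, has core of size one, and has the critical complement $W$ as extender, and then applying Lemma~\ref{ExtForm}. Your second paragraph merely fills in the (omitted) verification of that structural observation, so the argument matches the paper's.
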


Collapsible sets have a total critical extension with core size one. The converse is true if all the extensions have this property: 

\begin{proposition}\label{Coll}
Let $R$ be a proper subset of a $4$-critical graph $G$. Then $R$ is a collapsible subset if and only if every critical extension of $R$ is total and has a core of size one.
\end{proposition}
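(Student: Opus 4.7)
The plan is to prove both directions separately. For the forward direction, suppose $R$ is collapsible and fix an arbitrary 3-coloring $\phi$ of $G[R]$ together with an arbitrary 4-critical subgraph $W$ of $G_{\phi}(R)$. By collapsibility, the boundary of $R$ is monochromatic under $\phi$; after relabelling colors I may assume it is entirely colored $1$. Then in $G_{\phi}(R)$ the vertices $x_2$ and $x_3$ have no neighbors outside the added triangle $T$, so each has degree exactly $2$. Since every $4$-critical graph has minimum degree at least three, $x_2, x_3 \notin V(W)$. Combined with the non-emptiness of the core, this forces $W \cap T = \{x_1\}$, giving core size $1$. Next I observe that $G_{\phi}(R) - \{x_2, x_3\}$ coincides with the critical complement of $R$ (both have vertex set $\{x_1\} \cup (V(G)\setminus R)$ and the same edges), and the preceding proposition says that the critical complement is $4$-critical. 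Since a proper subgraph of a $4$-critical graph is $3$-colorable, $W$ must be \emph{equal} to the critical complement. Hence $(W-T)\cup R = V(G)$ (spanning); the extender contains every edge of $G[V(G)\setminus R]$ and each outside vertex's boundary neighbors in $R$ are matched by edges to $x_1$ in $W$ (completeness); so the extension is total.

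For the backward direction I argue the contrapositive: if $R$ is not collapsible then some critical extension of $R$ has core of size other than one or fails to be total. Suppose then that $R$ is not collapsible, and fix a $3$-coloring $\phi$ of $G[R]$ and two boundary vertices $v_1, v_2$ with $\phi(v_1) \ne \phi(v_2)$. By Proposition~\ref{Phi}, $G_{\phi}(R)$ has a $4$-critical subgraph $W$. For contradiction, assume the resulting extension is total with $W \cap T = \{x_i\}$ of size one. Pick a boundary vertex $v$ with $\phi(v) = j \ne i$, and a neighbor $u \in V(G)\setminus R$ of $v$; by the spanning condition $u \in W - T$. By construction of $G_{\phi}(R)$, the vertex $u$ is adjacent to $x_j$, which however is not in $W$. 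I then use completeness to count neighbors of $u$: in $W$ it can have at most one neighbor in $T$, namely $x_i$, and this only when $u$ has some color-$i$ boundary neighbor. A short case split derives the contradiction. If $u$ has no color-$i$ boundary neighbor, then $u x_i \notin E(G_{\phi}(R))$, so $u$ has zero neighbors in $T$ inside $W$, yet $v \in N_G(u) \cap R$. If $u$ does have a color-$i$ boundary neighbor $w$, then both $v$ and $w$ lie in $N_G(u) \cap R$, while $|N_W(u) \cap T| \le 1$. Either way the count of $u$'s neighbors in $R$ exceeds its count in $T$, violating completeness and contradicting totality.

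The structural observation driving the argument is that a bichromatic boundary in $\phi$ is incompatible with a core of size exactly one inside a total extension: any off-color boundary vertex automatically produces an outside witness $u$ whose boundary count in $R$ outstrips the single-vertex core. The delicate point I expect to need the most care is the exact bookkeeping in the second case of the backward direction, where $u$ happens to have both a color-$i$ and a color-$j$ neighbor on the boundary; here the count $|N_G(u)\cap R| \ge 2$ must be compared against $|N_W(u)\cap T| \le 1$. By contrast, the forward direction is essentially mechanical once $G_{\phi}(R) - \{x_2, x_3\}$ is identified with the critical complement and the $4$-criticality of the latter (from the preceding proposition) is invoked.
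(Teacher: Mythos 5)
Your proof is correct and follows essentially the same route as the paper: the forward direction identifies $G_\phi(R)-\{x_2,x_3\}$ with the critical complement and uses its $4$-criticality to pin down the unique extender with core $\{x_1\}$, and the backward direction uses spanning plus completeness to show a bichromatic boundary is incompatible with a core of size one. The only cosmetic difference is that you derive a violation of completeness for a fixed core $\{x_i\}$, whereas the paper concludes the core must have size at least two; these are the same neighbor-counting argument.
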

\begin{proof}
If $R$ is a collapsible subset, then there exists a unique critical extension whose extender is the critical complement. The core of the extension is the special vertex and hence has size one. Moreover, the extension is total. This proves the forward direction.

So let us assume that every critical extension of $R$ is total and has a core of size one. Suppose toward a contradiction that $R$ is not collapsible. Hence there exists a $3$-coloring $\phi$ of $R$ and $u,v\in \partial R$ such that $\phi(u)\ne \phi(v)$. Let $R'$ be an extension of $R$ using $\phi$ with extender $W$. Since $R'$ is total, $R'$ is spanning. Hence all the neighbors of $u,v$ outside of $R$ must be in $R'$. Furthermore, since $R'$ is total, $R'$ is complete. Hence the edges from those neighbors to $u,v$ must be in $W$. But then as $\phi(u)\ne\phi(v)$, the core of the extension must have size at least two, a contradiction.
\end{proof}

As the next proposition asserts, collapsible sets of critical complements yield collapsible sets in the original graph:

\begin{proposition}\label{Coll3}
Let $R$ be a collapsible subset of a $4$-critical graph $G$ and $W$ its critical complement and $v$ its collapsed vertex. If $R'$ is a collapsible set containing $R$, then $R'-R+v$ is collapsible in $W$ and has the same critical complement as $R'$ in $G$.
\end{proposition}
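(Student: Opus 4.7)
The plan is to set $R'' := (R' \setminus R) \cup \{v\}$, transport $3$-colorings between $W[R'']$ and $G[R']$, and then transport the collapsibility of $R'$ in $G$ to collapsibility of $R''$ in $W$.

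First I would observe that $W[R'']$ can be viewed as $G[R' \setminus R]$ together with the vertex $v$, whose neighbors in $R' \setminus R$ are exactly the neighbors in $G$ of $\partial_G R$ that lie in $R' \setminus R$ (using that $\partial_G R$ is independent, so contracting creates no self-loops). Given any $3$-coloring $\phi$ of $W[R'']$, I would extend it to a $3$-coloring $\tilde\phi$ of $G[R']$ as follows: by permuting colors, find a $3$-coloring of $G[R]$ whose (forced monochromatic) boundary color agrees with $\phi(v)$, which exists by collapsibility of $R$; glue this with $\phi$ on $R' \setminus R$. The gluing is a proper coloring precisely because of how $v$'s edges in $W$ were defined. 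Conversely, any $3$-coloring of $G[R']$ restricts to a coloring of $W[R'']$ by assigning $v$ the common color of $\partial_G R$.

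Next I would identify $\partial_W R''$ and show it is monochromatic in every $\phi$. For $u \in R' \setminus R$, one has $u \in \partial_W R''$ iff $u$ has a neighbor in $V(G) \setminus R'$ iff $u \in \partial_G R'$. For the special vertex $v$, one has $v \in \partial_W R''$ iff some $b \in \partial_G R$ has a neighbor outside $R'$, i.e.\ iff $\partial_G R \cap \partial_G R' \neq \emptyset$. Now fix $\phi$ and let $\tilde\phi$ be the extension. By collapsibility of $R'$ in $G$, all of $\partial_G R'$ receive a common color $c$ in $\tilde\phi$. For $u \in (R' \setminus R) \cap \partial_W R''$, we get $\phi(u) = \tilde\phi(u) = c$. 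If additionally $v \in \partial_W R''$, pick any $b \in \partial_G R \cap \partial_G R'$; then $\phi(v) = \tilde\phi(b) = c$. Hence $\partial_W R''$ is monochromatic, so $R''$ is collapsible in $W$.

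Finally I would check that the critical complement of $R''$ in $W$ coincides with the critical complement of $R'$ in $G$. Both have vertex set $(V(G) \setminus R') \cup \{v'\}$, and their induced subgraphs on $V(G) \setminus R'$ agree since $W[V(G) \setminus R'] = G[V(G) \setminus R']$. For the edges incident to $v'$, split $\partial_G R' = (\partial_G R' \cap R) \cup (\partial_G R' \cap (R' \setminus R))$ and observe $\partial_G R' \cap R = \partial_G R \cap \partial_G R'$: this is because any vertex of $R$ with a neighbor outside $R'$ is in particular a boundary vertex of $R$. Then the neighbors of $v'$ in the critical complement of $R'$ contributed by $\partial_G R' \cap R$ correspond exactly to the edges from $v$ (in $W$) that get re-contracted into $v'$ when forming the critical complement of $R''$, while those contributed by $\partial_G R' \cap (R' \setminus R)$ correspond to the contributions of $\partial_W R'' \cap (R' \setminus R)$. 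The two contracted graphs therefore have the same edge set. The main obstacle is the last step of bookkeeping, ensuring that the two-stage contraction (first $\partial_G R \to v$, then $\partial_W R'' \to v'$) yields exactly the same simple graph as the single contraction $\partial_G R' \to v'$ without spurious or missing edges; the decomposition of $\partial_G R'$ above is the key to making this transparent.
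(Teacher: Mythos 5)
Your proposal is correct and follows essentially the same route as the paper's proof: glue a $3$-coloring of $W[R'-R+v]$ with a color-permuted $3$-coloring of $G[R]$ to get a coloring of $G[R']$, invoke collapsibility of $R'$ to force the boundary to be monochromatic, and observe that the single contraction of $\partial R'$ agrees with the two-stage contraction through $v$. Your version is simply more explicit about identifying $\partial_W(R'-R+v)$ and about the edge bookkeeping for the critical complements, which the paper leaves implicit.
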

\begin{proof}
Let $\phi$ be $3$-coloring of $A=R'-R+v$. Let $\phi'$ be a $3$-coloring of $R$. We may assume by permuting colors that $\phi(v)=\phi'(z)$ for all $z\in \partial R$. But then $\phi \cup \phi'$ is a $3$-coloring of $R'$. As $R'$ is collapsible, every pair of vertices in $\partial R$ receives the same color. Hence every pair of vertices in $\partial A$ receives the same color in $\phi$. As $\phi$ was arbitrary, it follows that $A$ is collapsible.

Similarly the critical complements are identical since they are both obtained by identifying all the vertices in $R'$ to a single vertex. The latter identification is done in two steps, first by identifying all the vertices in $R$ to a single vertex and then all the vertices in $A$. 
\end{proof}

\subsection{Cocollapsible Sets}

We also would like to characterize the complements of collapsible sets. This motivates the following definitions.

\begin{definition}
Let $(G,R)$ be a rooted graph. If $f: R\rightarrow \{1,\ldots, k\}$, we say $(G,R)$ is \emph{$f-k$-colorable} if there exists a $k$-coloring $\phi$ such that $\phi(v)\ne f(v)$ for all $v\in R$. We say $(G,R)$ is \emph{boundary $k$-colorable} if $(G,R)$ is $f-k$-colorable for all functions $f$ such that there exists $u,v\in R$ such that $f(u)\ne f(v)$. 
\end{definition}

Note that $(K_3,K_3)$ is boundary $3$-colorable.

\begin{definition}
Let $G$ be a $4$-critical graph, $R\subsetneq V(G)$, and the $S$ the boundary of $R$.  We say $R$ is \emph{cocollapsible} if $(G[R], S)$ is boundary $3$-colorable and every vertex in $S$ has exactly one neighbor in $G\setminus R$. We say a cocollapsible subset $R$ is \emph{nontrivial} if $|G\setminus R|>1$.
\end{definition}

Hence a triangle of vertices of degree three is cocollapsible. Note that in a $4$-critical graph $G$, $G\setminus v$ is a trivial cocollapsible subset for every $v\in V(G)$.

\begin{proposition}\label{NontrivialCocoll}
The complement of a nontrivial cocollapsible subset $R$ of a $4$-critical graph $G$ is collapsible.
\end{proposition}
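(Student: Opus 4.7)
The plan is to prove this by contradiction. Let $R' = V(G)\setminus R$, let $S = \partial R$, and note $|R'|\ge 2$ since $R$ is nontrivial, so collapsibility is meaningful. By cocollapsibility each $w\in S$ has a unique neighbor $w'\in R'$, and since $\partial R'$ consists of the vertices of $R'$ with a neighbor in $R$, all such neighbors must lie in $S$, so $\partial R' \subseteq \{w' : w\in S\}$. If $|\partial R'|\le 1$ then $R'$ is vacuously collapsible, so I may assume $|\partial R'|\ge 2$.

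Now suppose toward a contradiction that $\phi$ is a $3$-coloring of $G[R']$ and $u',v'\in \partial R'$ with $\phi(u')\ne \phi(v')$. The key step is to define $f : S \to \{1,2,3\}$ by $f(w) = \phi(w')$, where $w'$ is the unique neighbor of $w$ in $R'$. Choosing preimages $u,v\in S$ of $u',v'$ respectively gives $f(u)\ne f(v)$, so $f$ is non-constant on $S$. By boundary $3$-colorability of $(G[R],S)$, there is a $3$-coloring $\psi$ of $G[R]$ with $\psi(w)\ne f(w)$ for every $w\in S$.

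Finally, I would combine the two colorings by setting $\chi(v)=\phi(v)$ for $v\in R'$ and $\chi(v)=\psi(v)$ for $v\in R$. Edges inside $R$ are properly colored by $\psi$, edges inside $R'$ are properly colored by $\phi$, and every edge between $R$ and $R'$ has the form $ww'$ for some $w\in S$, for which $\chi(w)=\psi(w)\ne f(w)=\phi(w')=\chi(w')$. Thus $\chi$ is a proper $3$-coloring of $G$, contradicting that $G$ is $4$-critical; hence $R'$ is collapsible.

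The only genuinely delicate point is constructing a non-constant $f$ on $S$ so that boundary $3$-colorability can be invoked. This is arranged by picking the witnesses $u',v'$ of the assumed failure of collapsibility and pulling them back along the unique-neighbor map $w\mapsto w'$; the ``exactly one neighbor outside $R$'' part of the cocollapsibility hypothesis is exactly what makes this pullback well-defined and makes the cross-edges $ww'$ properly colored when $\psi(w)\ne f(w)$.
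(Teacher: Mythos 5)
Your proof is correct and follows essentially the same route as the paper's: assume a $3$-coloring of $G\setminus R$ separating two boundary vertices, pull the colors back along the unique-neighbor map to get a non-constant $f$ on $\partial R$, invoke boundary $3$-colorability, and glue the colorings to contradict $4$-criticality. You are merely more explicit than the paper about why $f$ is non-constant (so that boundary $3$-colorability applies) and about the degenerate case $|\partial(G\setminus R)|\le 1$, which is a welcome bit of extra care but not a different argument.
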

\begin{proof}
Suppose not. Then there exists a $3$-coloring $\phi$ of $G\setminus R$ such that there exist $u,v$ in the boundary of $G\setminus R$ with $\phi(u)\ne \phi(v)$. Now for all $x$ in the boundary of $R$, let $f(x)$ be the color that the neighbor of $x$ in $G\setminus R$ receives in $\phi$. As $R$ is cocollapsible, there exists a coloring $\phi'$ of $R$ with $\phi'(x)\ne f(x)$. But then $\phi\cup \phi'$ is a $3$-coloring of $G$, a contradiction.
\end{proof}

Here are some useful lemmas about the existence of collapsible or cocollapsible sets in $4$-Ore graphs.

\begin{lemma}\label{Cocoll}
Let $G\ne K_4$ be $4$-Ore. For every $v\in V(G)$, $G\setminus v$ contains a nontrivial cocollapsible set.
\end{lemma}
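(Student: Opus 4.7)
The plan is to induct on $|V(G)|$. For the base case $G = H_7$, one checks directly that $H_7$ contains two disjoint triangles of degree-$3$ vertices (one on each side of its Ore-composition into two $K_4$'s); each such triangle is cocollapsible because it induces a $K_3$ in which every vertex has exactly one external neighbor and $(K_3, K_3)$ is boundary $3$-colorable, and for any $v \in V(H_7)$ at least one of these two triangles avoids $v$.

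For the inductive step, write $G$ as the Ore-composition of two smaller $4$-Ore graphs: the edge-side $G_1$ with replaced edge $xy$, and the split-side $G_2$ with split vertex $z$. Fix $v \in V(G)$. When $v \in V(G_1)$, I would produce a cocollapsible set inside $V(G_2) \setminus \{z\}$. If $G_2 = K_4$, the three vertices in $V(G_2) \setminus \{z\}$ form a triangle in $G$ in which each vertex has exactly one neighbor in $\{x, y\}$ (namely $z_1$ or $z_2$ according to the split), so this triangle is cocollapsible. If $G_2 \ne K_4$, the inductive hypothesis applied to $G_2$ at $z$ produces a nontrivial cocollapsible $R_2 \subseteq V(G_2) \setminus \{z\}$ in $G_2$; since $z \notin R_2$, the induced subgraph, boundary, and external-neighbor counts of $R_2$ are identical in $G$ and $G_2$, so $R_2$ remains cocollapsible in $G$. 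Either way the resulting set is disjoint from $V(G_1)$ and hence avoids $v$.

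When $v \in V(G_2) \setminus \{z\}$, the plan is symmetric: seek a cocollapsible set on the $G_1$-side. If $G_1 = K_4$ with $V(G_1) \setminus \{x, y\} = \{a, b\}$, the triangle $\{a, b, z_i\}$ is cocollapsible in $G$ whenever $d_{G_2}(z_i) = 1$, so at least one of $\{a, b, x\}, \{a, b, y\}$ works unless both split-degrees are at least $2$. In that remaining case I would apply the inductive hypothesis to $G_2$ at $v$ (valid since then $G_2 \ne K_4$, else $G = H_7$) to obtain a nontrivial cocollapsible $R_2 \subseteq V(G_2) \setminus \{v\}$ in $G_2$; if $z \notin R_2$ it transfers to $G$ directly, and if $z \in R_2$ I would take $R := (R_2 \setminus \{z\}) \cup V(G_1)$. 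The $G_1 \ne K_4$ case is handled by applying induction to $G_1$ at $x$ and checking that the resulting $R_1$ remains cocollapsible in $G$ (the subtle point being $y \in R_1$, where the deletion of $xy$ and the identification $y = z_2$ preserves the "exactly one external neighbor" condition at $y$ only if $d_{G_2}(z_2) = 1$).

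The main obstacle is the sub-case where $G_1 = K_4$, both split-degrees are at least $2$, and the inductive $R_2$ contains $z$. Verifying that $(R_2 \setminus \{z\}) \cup V(G_1)$ is cocollapsible in $G$ requires that $x = z_1$ and $y = z_2$ each have at most one external neighbor in $V(G_2) \setminus R_2$, which appears to require either strengthening the inductive hypothesis to force $N_{G_2}(z) \subseteq R_2$, or carefully selecting $R_2$ among the cocollapsible sets of $G_2 \setminus v$ guaranteed by induction. Boundary $3$-colorability of the combined set also needs checking, using that any $3$-coloring of $G_2[R_2]$ forces the copies of $z$-neighbors to coordinate with the forced equal coloring of $x, y$ in $K_4 - xy$.
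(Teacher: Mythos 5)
Your induction is organized around the same decomposition as the paper's, but two sub-cases of ``$v\in V(G_2)\setminus\{z\}$'' are left unresolved, and only one of them is a real problem. The obstacle you flag at the end is not actually an obstacle, and no strengthening of the inductive hypothesis is needed: if $z\in R_2$ and $R_2$ is cocollapsible in $G_2$, then by definition $z$ has at most one neighbor in $G_2\setminus R_2$, so $|N_{G_2}(z)\setminus R_2|\le 1$; hence at most one of $z_1=x$, $z_2=y$ has a neighbor outside $R:=(R_2\setminus\{z\})\cup V(G_1)$, and that one has exactly one such neighbor. Boundary $3$-colorability of $R$ also follows cleanly: given a non-constant forbidden-color function $f$ on the boundary of $R$, transfer it to the boundary of $R_2$ by assigning $z$ the value $f$ gives to whichever of $x,y$ lies on the boundary, take a coloring $\phi_2$ of $G_2[R_2]$ avoiding it, and extend over $V(G_1)$ using that every $3$-coloring of $K_4-xy$ gives $x$ and $y$ a common color, which may be taken to be $\phi_2(z)$; properness of the edges from $x,y$ into $R_2\setminus\{z\}$ is inherited from properness of $\phi_2$ at $z$. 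This is exactly the paper's move $R'=(R\setminus z)\cup V(G_1)$.

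The genuine gap is your $G_1\ne K_4$ branch. Inducting on $G_1$ at $x$ gives $R_1\subseteq V(G_1)\setminus\{x\}$, and when $y\in R_1$ you correctly observe that $y=z_2$ acquires $d_{G_2}(z_2)$ external neighbors in $G$; you need this to be $1$, but nothing forces it -- for instance $d_{G_2}(z)=4$ split as $2+2$ defeats both the choice of $x$ and the symmetric choice of $y$. The paper avoids this by splitting on $G_2$ rather than $G_1$: whenever $G_2\ne K_4$ it inducts on $G_2$ at $v$ (handling $z\in R_2$ as above, a construction that works for arbitrary $4$-critical $G_1$, not just $K_4$), and only when $G_2=K_4$ does it work on the $G_1$ side -- and there $d_{G_2}(z)=3$ forces one split half to have degree $1$, so it inducts on $G_1$ at the split half of degree two and the ``$z_2\in R$'' case is safe. (It also dispenses with your first case more cheaply: for $v\in V(G_1)$ it takes $V(G_2)\setminus\{z\}$ outright, which is a trivial cocollapsible set of $G_2$ but nontrivial in $G$, with no need to distinguish $G_2=K_4$.) As written, your case analysis does not cover $G_1\ne K_4$ with both split-degrees at least two, so the proof is incomplete there.
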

\begin{proof}
We proceed by induction on the number of vertices of $G$. If $G=H_7$, then the lemma follows since for every $v\in V(H_7)$, there exists a triangle of degree three vertices disjoint from $v$ which is a nontrivial cocollapsible subset as desired.

As $G$ is $4$-Ore and $G\ne K_4$, $G$ is the Ore-composition of two $4$-Ore graphs. Let $G_1$ be the edge-side of this composition and $G_2$ the split side with split vertex $z$. If $v\in V(G_1)$, then $G_2\setminus z$ is a trivial cocollapsible set in $G_2$. But then $G_2\setminus z$ is cocollapsible in $G$ and yet nontrivial as desired. So we may suppose that $v\in V(G_2)\setminus z$. 

Suppose $G_2\ne K_4$. Then, by induction, there exists a nontrivial cocollapsible set $R$ in $G_2\setminus v$. Note that $G_2[R]$ is boundary $3$-colorable. If $z\not\in R$, then $G[R]$ is boundary $3$-colorable. Moreover, every vertex in $G[R]$ has at most one neighbor in $G\setminus R$. Thus $R$ is a nontrivial cocollapsible subset of $G$ as desired. So we may suppose that $z\in R$. Let $R'=(R\setminus z)\cup V(G_1)$. Now $G[R']$ is boundary $3$-colorable. Moreover, every vertex in $R'$ has at most one neighbor in $G\setminus R'$. Thus $R'$ is a nontrivial cocollapsible subset of $G$ as desired.

So we may suppose that $G_2=K_4$. Note that since $G\ne H_7$, $G_1\ne K_4$. Let $z_1,z_2$ be the vertices into which $z$ is split. We may suppose without loss of generality that $z_1$ has two neighbors in $V(G_2)$ and $z_2$ has one neighbor in $V(G_2)$. As $G_1\ne K_4$, it follows by induction that there exists a nontrivial cocollapsible $R$ subset of $G_1\setminus z_1$. If $z_2\not\in R$, then $R$ is a nontrivial cocollapsible subset of $G$ as desired. On the other hand if $z_2\in R$, then $R$ is also a nontrivial collapsible subset of $G$, since $z_2$ still has exactly one neighbor outside of $R$, its neighbor in $V(G_2)$ instead of $z_1$.
\end{proof}

\begin{lemma}\label{Cocoll2}
Let $G\ne K_4$ be $4$-Ore. For every triangle $T$ in $G$, $G\setminus T$ contains either a collapsible or a nontrivial cocollapsible set.
\end{lemma}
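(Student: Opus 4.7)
The plan is to proceed by induction on $|V(G)|$, mirroring the case analysis of Lemma~\ref{Cocoll}. Since $G \neq K_4$ is $4$-Ore, $G$ is the Ore-composition of two smaller $4$-Ore graphs $G_1$ (edge-side, replaced edge $xy$) and $G_2$ (split-side, split vertex $z$ producing $z_1 = x$ and $z_2 = y$); set $V_1 = V(G_1)$ and $V_2' = V(G_2) \setminus \{z\}$, so $V(G)$ is the disjoint union $V_1 \cup V_2'$. A triangle $T$ in $G$ must satisfy exactly one of: (I) $T \subseteq V_1$; (II) $T \subseteq V_2'$; or (III) $|T \cap V_1| = 1$, with the $V_1$-vertex being $x$ or $y$. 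A triangle with two $V_1$-vertices and one $V_2'$-vertex is impossible, since both $V_1$-vertices would have to be in $\{x, y\}$ but $xy \notin E(G)$.

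For Case (I), I would take $V_2'$ as a nontrivial cocollapsible subset of $G$ disjoint from $T$. Its boundary in $G$ is $N_{G_2}(z_1) \cup N_{G_2}(z_2) = N_{G_2}(z)$, each boundary vertex has exactly one external neighbor (either $x$ or $y$, since the neighborhoods of $z_1$ and $z_2$ are disjoint), and $G[V_2'] = G_2 - z$. The required boundary $3$-colorability of $(G_2 - z, N_{G_2}(z))$ is exactly the trivial cocollapsibility of $V(G_2) \setminus \{z\}$ in the $4$-critical graph $G_2$, noted in the remark preceding this lemma. Dually in Case (II), I would use $V_1$ as a collapsible subset disjoint from $T$: its boundary in $G$ is $\{x, y\}$, and since $G_1$ is $4$-critical, every $3$-coloring of $G[V_1] = G_1 - xy$ assigns $x$ and $y$ the same color (otherwise such a coloring would extend to a $3$-coloring of $G_1$).

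For Case (III), I would split on whether $G_2 = K_4$. If $G_2 \neq K_4$, then without loss of generality $T = \{x, a, b\}$ and $T_2 := \{z, a, b\}$ is a triangle in $G_2$; applying the induction hypothesis to $G_2$ produces a collapsible or nontrivial cocollapsible set $R \subseteq V(G_2) \setminus T_2 \subseteq V_2'$ (in particular $z \notin R$). A routine check shows $G[R] = G_2[R]$, $\partial_G(R) = \partial_{G_2}(R)$, and external-neighbor counts are preserved (any $G_2$-adjacency to $z$ is replaced in $G$ by an adjacency to $x$ or $y$, which remains external to $R$), so $R$ inherits the required property in $G \setminus T$. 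If $G_2 = K_4$, the existence of $T$ forces $d_{G_2}(z_1) = 2$ and $d_{G_2}(z_2) = 1$ (after relabeling), with $T = \{x, a, b\}$ and $\{a, b\} = N_{G_2}(z_1)$; I would then take $V_1 \setminus \{x\}$ as a nontrivial cocollapsible subset of $G$ disjoint from $T$. Its boundary in $G$ is $N_{G_1}(x)$, since $y$'s unique external neighbor is the single vertex of $N_{G_2}(z_2)$ while every vertex of $N_{G_1}(x) \setminus \{y\}$ has unique external neighbor $x$. The boundary $3$-colorability of $(G_1 - x, N_{G_1}(x))$ is once again the trivial cocollapsibility of $V(G_1) \setminus \{x\}$ in the $4$-critical $G_1$.

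The main obstacle is the careful boundary and external-neighbor bookkeeping in Case (III); in particular, the observation in sub-case (III.b) that $G_2 = K_4$ together with the existence of $T$ pins down the split degrees to $(2, 1)$ in exactly the way needed for $V_1 \setminus \{x\}$ to be cocollapsible. No separate base case is required: $G = H_7$, the Ore-composition of two copies of $K_4$, is handled uniformly by this case analysis, with its four triangles falling respectively into Cases (I), (II), and (III.b).
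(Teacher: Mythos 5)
Your proof is correct and follows the same inductive skeleton as the paper's: write $G$ as an Ore-composition, split according to whether $T$ lies in $V(G_1)$, in $V(G_2)\setminus\{z\}$, or meets $\{x,y\}$, and in the last case branch on whether $G_2=K_4$. The one genuine difference is how you discharge Case (I) and sub-case (III.b): the paper invokes Lemma~\ref{Cocoll} on $G_2\setminus z$ (resp.\ $G_1\setminus z_1$), which as stated requires $G_2\ne K_4$ (resp.\ $G_1\ne K_4$) and so, read literally, leaves the configurations with that side equal to $K_4$ (e.g.\ $G=H_7$) unhandled; you instead take the trivial cocollapsible sets $V(G_2)\setminus\{z\}$ and $V(G_1)\setminus\{x\}$ directly and check that they become \emph{nontrivial} cocollapsible sets of $G$. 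This is cleaner and self-contained, and your bookkeeping in (III.b) — that the existence of $T$ forces the split degrees to be $(2,1)$, so that $y$ retains exactly one external neighbour and the boundary of $V(G_1)\setminus\{x\}$ is exactly $N_{G_1}(x)$ — is precisely what is needed. I see no gaps.
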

\begin{proof}
We proceed by induction on the number of vertices of $G$. $G$ is the Ore-composition of two $4$-Ore graphs. Let $G_1$ be the edge-side of this composition and let $G_2$ be the split side with split vertex $z$ split into vertices $z_1,z_2$. But then $T$ is either a subgraph of $G_1$ or $G_2$. Suppose $T$ is a subset of $G_1$. By Lemma~\ref{Cocoll}, there exists a nontrivial cocollapsible subset $R$ of $G_2\setminus z$. But then $R$ is a cocollapsible subset of $G$ as desired.

So we may suppose that $T$ is a subset of $G_2$. If neither $z_1$ nor $z_2$ are in $T$, then $R=V(G_1)$ is a collapsible subset of $G$ as desired. So we may suppose without loss of generality that $z_1\in T$. As $z_2$ is not adjacent to $z_1$, $z_2\not\in T$. If $G_2$ is not isomorphic to $K_4$, then by induction there exists a collapsible or cocollapsible subset of $V(G_2)\setminus T$, which is then a collapsible or cocollapsible set of $G$ as desired.

So we may suppose that $G_2$ is isomorphic to $K_4$. But then $z_2$ has degree one in $G_2(z_1,z_2)$. By Lemma~\ref{Cocoll}, there exists a nontrivial cocollapsible subset $R$ of $G_1\setminus z_1$. If $z_2\not\in R$, then $R$ is certainly cocollapsible in $G$ as desired, while if $z_2\in R$, then $R$ is cocollapsible in $G$ as desired since it follows that $z_2$ has only one neighbor in $G\setminus R$. 
\end{proof}

\subsection{Uncollapsible Vertices}

\begin{definition}
Let $G$ be a $4$-critical graph and $v\in V(G)$. We say $v$ is an \emph{uncollapsible} vertex if there does not exist a collapsible set in $G\setminus v$. We say a graph $G'$ obtained from splitting $v$ into two vertices $v_1,v_2$ is an \emph{uncollapsible} split if there does not exist a collapsible set in $G'$. 
\end{definition}

Note that every vertex of $K_4$ is uncollapsible but yet there is no uncollapsible split of $K_4$. We now characterize uncollapsible vertices in $4$-Ore graphs:

\begin{lemma}\label{Uncoll1}
Let $G$ be a $4$-critical graph such that $G$ is the Ore-composition of edge-side $G_1$ and split-side $G_2$ with split vertex $z$ split into two vertices $z_1,z_2$. If $u\in V(G)$, then $u$ is uncollapsible if and only if $u$ is an uncollapsible vertex of $G_1$, $z$ is uncollapsible in $G_2$, and if $u\in \{z_1,z_2\}$, then $u$ is uncollapsible in the graph obainted from $G_2$ by splitting $z$ into $z_1$ and $z_2$.
\end{lemma}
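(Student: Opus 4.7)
The strategy is to prove both directions of the equivalence by contrapositive, exploiting two structural properties that follow from $G_1$ and $G_2$ being $4$-critical (by Proposition~\ref{OreSep}): (a) in any $3$-coloring of $G_1 - xy$, the endpoints $x$ and $y$ must receive the \emph{same} color, since otherwise the coloring would extend to a $3$-coloring of $G_1$; and (b) in any $3$-coloring of the split graph $G_2'$ obtained from $G_2$ by splitting $z$ into $z_1, z_2$, the vertices $z_1$ and $z_2$ must receive \emph{different} colors, since otherwise we could merge them back to $3$-color $G_2$. The $2$-vertex cut $\{z_1,z_2\} = \{x,y\}$ of $G$ then lets us relate collapsible subsets in $G$ to collapsible subsets in the two sides.

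For the forward direction, assume one of (i), (ii), (iii) fails and construct a collapsible subset of $G$ avoiding $u$. If there is a collapsible $R \subseteq V(G_1) \setminus \{u\}$ in $G_1$, then when $\{x,y\} \not\subseteq R$ one verifies $G[R] = G_1[R]$ and that the boundary of $R$ in $G$ coincides with its boundary in $G_1$, so $R$ is itself collapsible in $G$; when $\{x,y\} \subseteq R$, we enlarge to $R^* = R \cup (V(G_2) \setminus \{z_1,z_2\})$, and property (b) forces any $3$-coloring of $G[R^*]$ to give $x$ and $y$ distinct colors, so it restricts to a proper $3$-coloring of $G_1[R]$, after which the $G_1$-boundary of $R$ (which equals the $G$-boundary of $R^*$) is monochromatic. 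If there is a collapsible $R_2 \subseteq V(G_2) \setminus \{z\}$, the same $R_2$ is collapsible in $G$ since the subgraph and boundary are preserved after identification (a $G_2$-neighbor $z$ of a vertex in $R_2$ becomes a $G$-neighbor $z_1$ or $z_2$, still outside $R_2$). Finally, if $u \in \{z_1,z_2\}$, say $u = z_1$, and there is a collapsible $R' \subseteq V(G_2') \setminus \{z_1\}$, we enlarge symmetrically to $R^* = R' \cup (V(G_1) \setminus \{x\})$ and use property (a) on the $G_1$-side together with the collapsibility of $R'$ in $G_2'$ to obtain the required monochromaticity.

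For the backward direction (contrapositive), assume $R \subseteq V(G) \setminus \{u\}$ is collapsible in $G$ and write $R_1 = R \cap V(G_1)$ and $R_2 = R \cap (V(G_2) \setminus \{z_1,z_2\})$. We split on how $R$ meets the $2$-cut $\{z_1,z_2\}$. When $R_2 = \emptyset$, property (b) allows us to see every $3$-coloring of $G_1[R_1]$ (where $x,y$ are colored differently) as the restriction of some $3$-coloring of $G[R]$, and we recover that $R_1$ is collapsible in $G_1 \setminus u$, contradicting (i). When $R_1 \cap (V(G_1) \setminus \{x,y\}) = \emptyset$, the subset lives essentially on the $G_2$-side, and by using property (a) (after contracting $x,y$ back to $z$ when both are in $R$, or keeping the split otherwise) we identify a collapsible subset of $G_2 \setminus z$ contradicting (ii), or one of $G_2' \setminus u$ contradicting (iii). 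The remaining mixed case, where $R$ has nontrivial presence on both interior sides, is the main obstacle: here one must carefully track the boundary of $R$ through $\{x,y\}$ and argue, using properties (a) and (b) simultaneously, that the monochromaticity imposed on the $G$-boundary of $R$ transports to the corresponding boundary of $R_1$ in $G_1$ or to $R_2$ (suitably augmented) in $G_2$ or $G_2'$, producing a collapsible subset on one side that contradicts the appropriate hypothesis.
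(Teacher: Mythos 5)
Your overall strategy --- transferring collapsible sets across the $2$-cut $\{x,y\}=\{z_1,z_2\}$ using the facts that $x,y$ must agree in every $3$-coloring of $G_1-xy$ and must disagree in every $3$-coloring of $G_2(z_1,z_2)$ --- is the same as the paper's, and your handling of the failures of (i) and (ii) in the forward direction is sound. However, your construction for the failure of (iii) is wrong. With $u=z_1=x$ and $R'$ collapsible in $G_2(z_1,z_2)\setminus z_1$, you take $R^*=R'\cup(V(G_1)\setminus\{x\})$. The boundary of $R^*$ in $G$ then contains every $G$-neighbor of $x$ lying in $V(G_1)$, i.e.\ all of $N_{G_1}(x)\setminus\{y\}$, a set of size at least two since $d_{G_1}(x)\ge 3$. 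But $G[R^*]$ induces $G_1-x$ on $V(G_1)\setminus\{x\}$, and since $G_1$ is $4$-critical every $3$-coloring of $G_1-x$ must use all three colors on $N_{G_1}(x)$; hence $N_{G_1}(x)\setminus\{y\}$ receives at least two colors and the boundary of $R^*$ is never monochromatic (already for $G_1=K_4$ it fails). Moreover ``property (a)'' concerns colorings in which both $x$ and $y$ are present, so it cannot be invoked after deleting $x$. The correct move is not to enlarge toward the $G_1$-side at all: if $z_2\notin R'$ then $R'$ itself is collapsible in $G$ (same induced subgraph and same boundary), and the only case requiring thought is $z_2\in R'$, where one must deal directly with the extra $G_1$-neighbors of $y=z_2$.

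The second gap is in the backward direction: you explicitly label the mixed case, where $R$ meets both $V(G_1)\setminus\{x,y\}$ and $V(G_2)\setminus\{z_1,z_2\}$, as ``the main obstacle'' and then only describe what ``one must'' do. That case is the heart of the reverse implication, and describing the desired conclusion is not a proof of it. The paper disposes of it by arguing that whenever a collapsible $R\subseteq V(G)\setminus\{u\}$ intersects $V(G_1)\setminus\{x,y\}$, the trace $R\cap V(G_1)$ is itself a collapsible subset of $G_1$, contradicting the assumption that $u$ is uncollapsible in $G_1$; no further appeal to $G_2$ or to $G_2(z_1,z_2)$ is needed in that case. As written, your backward direction is incomplete precisely where the work lies.
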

\begin{proof}
First suppose $u$ in uncollapsible in $G$. That is, there does not exist a collapsible subset disjoint from $u$. Hence $u\in V(G_1)$ since $V(G_1)\setminus V(G_2)$ is collapsible. But then there does not exist a collapsible subset contained in $V(G_2)-z$ and so $z$ is uncollapsible in $G_2$. Finally if $u\in \{z_1,z_2\}$, then certainly $u$ is uncollapsible in the graph obtained from $G_2$ by splitting $z$.

So let us prove the reverse direction. Suppose to a contradiction that $u$ is not uncollapsible and let $R$ be a collapsible subset contained in $G-u$. If $R$ is contained in $G_2$, then $z$ is not uncollapsible in $G_2$, a contradiction. So we may assume that $R$ intersects $G_1\setminus \{z_1,z_2\}$ nontrivially. But then $R\cap V(G_1)$ is a collapsible subset of $G_1$, contradicting that $u$ is uncollapsible in $G_1$.
\end{proof}

In order to bound the size of components of $D_3(G)$ for a minimum counterexample $G$ to Theorem~\ref{Ore4Best}, it will be useful to understand the degree three neighbors of an uncollapsible vertex in a $4$-Ore graph:

\begin{lemma}\label{Uncoll2}
Let $G$ be a $4$-Ore graph and $G(u_1,u_2)$ an uncollapsible split of a vertex $u$ in $G$, then $d(u_1),d(u_2)\ge 2$.
\end{lemma}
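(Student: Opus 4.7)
The plan is a direct proof by contradiction, producing a small collapsible set in the split graph $G' := G(u_1,u_2)$. Suppose without loss of generality that $d(u_1) \le 1$. Since the split definition demands both $u_1$ and $u_2$ to have positive degree, we must in fact have $d(u_1) = 1$; let $w$ denote the unique neighbor of $u_1$ in $G'$.

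I first leverage the $4$-Ore hypothesis: any $4$-Ore graph is $4$-critical and hence has minimum degree at least $3$. Thus $d_G(u) \ge 3$, which gives $d(u_2) = d_G(u) - 1 \ge 2$, and also $d_{G'}(w) = d_G(w) \ge 3$, since the split alters the incidences only at $u$. Now I consider the set $R = \{u_1, w\}$ and compute its boundary in $G'$: the vertex $u_1$ has only the neighbor $w \in R$, so $u_1 \notin \partial R$, while $w$ has $d_{G'}(w) - 1 \ge 2$ neighbors in $V(G') \setminus R$, so $w \in \partial R$. Therefore $\partial R = \{w\}$.

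To conclude, since $|\partial R| = 1$, the condition that all vertices of $\partial R$ receive the same color in every $3$-coloring of $G'[R]$ is vacuously satisfied, so $R$ is a collapsible subset of $G'$. This contradicts $G'$ being an uncollapsible split, yielding $d(u_1) \ge 2$; by symmetry the identical argument gives $d(u_2) \ge 2$.

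The only subtle point in the argument is conceptual rather than combinatorial: one must confirm that a singleton boundary really counts as a collapsible configuration when the definition of collapsibility is extended from a $4$-critical graph to the (possibly non-critical) split graph $G'$. This is precisely the reading implicit in the paper's earlier remark that $K_4$ has no uncollapsible split, since the only splits of $K_4$ are the $(1,2)$-splits, and those are excluded from uncollapsibility by exactly the one-vertex-boundary argument used here.
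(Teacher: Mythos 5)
Your proof is correct and follows essentially the same route as the paper: the paper simply observes that $G'\setminus u_1$ is a collapsible set (its boundary being the singleton $\{w\}$), whereas you take $\{u_1,w\}$, but in both cases the contradiction comes from a degree-one vertex forcing a set with one-vertex boundary, which is trivially collapsible. The definitional subtlety you flag (applying ``collapsible'' inside the non-critical split graph $G'$) is present in the paper's own definition of uncollapsible split and is not an additional gap in your argument.
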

\begin{proof}
Suppose not. We may assume without loss of generality that $d(u_1)=1$. But then $G\setminus u_1$ is a collapsible subset, a contradiction.
\end{proof}

Hence there is no uncollapsible split of $K_4$. However, there is an uncollapsible split of $H_7$.

\begin{lemma}\label{Uncoll3}
Let $G$ be a $4$-Ore graph and $G(u_1,u_2)$ an uncollapsible split of a vertex $u$. If $v$ is a neighbor of $u_1$ or $u_2$ with degree three, then either 
\begin{enumerate}
\item $v$ is in a triangle in $G-u$, or,
\item the two neighbors of $v$ not in $\{u_1,u_2\}$ have degree at least four (and there exists an Ore-decomposition of $G$ such that the other two neighbors of $v$ are incident with the replaced edge and $u$ is in the edge-side).
\end{enumerate}
\end{lemma}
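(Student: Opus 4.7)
The plan is to proceed by induction on $|V(G)|$. The base case $G = K_4$ is vacuous: by Lemma~\ref{Uncoll2} no uncollapsible split exists in $K_4$, since any split of a degree-$3$ vertex leaves one part of degree $1$. For the inductive step, $G \neq K_4$ is an Ore-composition $G = G_1 \ast G_2$, with $G_1$ the edge-side (replaced edge $xy$) and $G_2$ the split-side (split vertex $z$ identified as $z_1 = x$, $z_2 = y$). Lemma~\ref{Uncoll1} forces $u \in V(G_1)$, with $u$ uncollapsible in $G_1$ and $z$ uncollapsible in $G_2$; the uncollapsible split $G(u_1, u_2)$ restricts to an uncollapsible split of $u$ inside the relevant smaller graph (either $G_1$, or the split graph of $G_2$ if $u \in \{x,y\}$).

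I would then split into two cases. \emph{Case A: $u \in V(G_1) \setminus \{x,y\}$.} Since $u$'s neighbors lie in $V(G_1)$, so does $v$. I first rule out $v \in \{x,y\}$: suppose $v = x$; then $d_G(x) = 3$ forces $d_{G_1}(x) = 3$ and $d_{G_2}(z_1) = 1$, so $z_1$ has a unique $G_2$-neighbor $w$. The set $R = V(G_2) \setminus \{z_1\}$ has boundary $\{y,w\}$ in $G$, and I would show $R$ is collapsible: in any $3$-coloring $\phi$ of $G[R]$, the fact that $G_2$ is not $3$-colorable forces $\phi(N_{G_2}(z)) = \phi(\{w\} \cup N_{G_2}(z_2))$ to cover all three colors, which, combined with $\phi(z_2) \notin \phi(N_{G_2}(z_2))$ and $d_{G_2}(z_2) \geq 2$, pins down $\phi(w) = \phi(z_2) = \phi(y)$. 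Since $u \notin R$, this contradicts the uncollapsibility of $u$. Hence $v \in V(G_1) \setminus \{x,y\}$, $d_{G_1}(v) = d_G(v) = 3$, and I apply the inductive hypothesis to $G_1$ equipped with the inherited uncollapsible split of $u$.

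If the inductive conclusion for $G_1$ is (1), giving a triangle $T \subseteq G_1 - u$ through $v$, then either $T \neq \{v,x,y\}$ and $T$ is a triangle in $G - u$, or $T = \{v,x,y\}$ and $v$'s three $G$-neighbors are $x,y,u$; in the latter case the decomposition $G = G_1 \ast G_2$ witnesses (2) for $G$, with $d_G(x), d_G(y) \geq 4$ again supplied by the collapsibility argument above. If the inductive conclusion for $G_1$ is (2), with witnessing Ore-decomposition $G_1 = G_1' \ast G_1''$, then I nest this inside $G = G_1 \ast G_2$ (the nesting preserves $u$ in the edge-side, and composition with $G_2$ only increases the degrees of $v$'s non-$u_i$ neighbors, so they remain $\geq 4$) to exhibit an Ore-decomposition of $G$ verifying (2). \emph{Case B: $u \in \{x,y\}$.} This is handled analogously: the uncollapsible split of $u = x$ in $G$ corresponds via Lemma~\ref{Uncoll1} to an uncollapsible split of $z$ in the split graph of $G_2$ together with the uncollapsibility of $x$ in $G_1$, and the inductive hypothesis is applied to $G_1$ or $G_2$ according to whether $v \in V(G_1)$ or $v \in V(G_2) \setminus \{z\}$.

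The main obstacle is the collapsibility lemma for $R = V(G_2) \setminus \{z_1\}$ used in Case A: the argument requires a careful analysis of how the non-$3$-colorability of $G_2$ constrains $3$-colorings of $G_2 - z_1$, crucially using $d_{G_2}(z_2) \geq 2$ (which follows from $d_{G_2}(z) \geq 3$ and $d_{G_2}(z_1) = 1$) to force exactly two colors among $N_{G_2}(z_2)$. Once this is in place, the rest of the case analysis---translating conclusion (1) via edge-preservation in $G$ and translating (2) via nested Ore-decompositions---is routine.
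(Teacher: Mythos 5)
Your overall strategy coincides with the paper's: induct on $|V(G)|$, write $G$ as an Ore-composition, locate $v$ in the edge-side or the split-side, apply the inductive hypothesis to that side (to the inherited split of $u$ in $G_1$, or to the split $G_2(z_1,z_2)$ of $z$ when $v\in V(G_2)$), and translate the two outcomes back to $G$. Your Case A is sound and in places more careful than the paper: the explicit collapsibility argument for $R=(V(G_2)\setminus\{z\})\cup\{z_2\}$, which forces $d_{G_2}(z_1),d_{G_2}(z_2)\ge 2$ and hence $d_G(x),d_G(y)\ge 4$, does the work that the paper gets by asserting that $G_2(z_1,z_2)$ is an uncollapsible split and invoking Lemma~\ref{Uncoll2}; your treatment of the exceptional triangle $T=\{v,x,y\}$ matches the paper's exactly.

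The genuine gap is at the bottom of the induction. You take $K_4$ as a vacuous base case, but the inductive step must then dispose of $H_7$ and, more generally, of every composition in which the side containing $v$ is a $K_4$. In all of these the inductive hypothesis is inapplicable for precisely the reason you yourself cite in the base case: $K_4$ admits no uncollapsible split, so ``apply the inductive hypothesis to $G_1$ or $G_2$'' yields nothing. This situation is unavoidable rather than degenerate: since $d(u)=d(u_1)+d(u_2)\ge 4$ by Lemma~\ref{Uncoll2}, the smallest graph satisfying the hypotheses is $H_7$, both of whose constituent sides are copies of $K_4$, and it falls squarely into your Case B (the degree-four vertex of $H_7$ is an endpoint of the replaced edge), which you dismiss as ``analogous.'' In Case A the problem cannot arise ($u\in V(G_1)\setminus\{x,y\}$ forces $d_{G_1}(u)=d_G(u)\ge 4$, so $G_1\ne K_4$), but Case B needs a direct verification whenever $G_1$ or $G_2$ is a $K_4$: a degree-three neighbor $v$ sitting in a $K_4$ side lies in a triangle of that $K_4$ that survives in $G-u$ (namely $\{a,b,y\}$ when $G_1=K_4$ with $V(G_1)=\{x,y,a,b\}$ and $u=x$, or $V(G_2)\setminus\{z\}$ when $G_2=K_4$), giving outcome (1). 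The paper supplies exactly this by making $H_7$ its true base case and checking that every neighbor of its degree-four vertex lies in a triangle. As written, your argument does not establish the lemma even for $H_7$.
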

\begin{proof}
We proceed by induction on the number of vertices of $G$. By Lemma~\ref{Uncoll2}, $d(u_1),d(u_2)\ge 2$. As $d(u) = d(u_1)+d(u_2)$, we find that $d(u) \ge 4$. The base case to consider then is $H_7$. But every neighbor of the degree four vertex in $H_7$ is in a triangle. 

So we may assume that $G$ is the Ore-composition of two $4$-Ore graphs. Let $G_1$ be the edge-side of this composition and $G_2$ the split side with split vertex $z$ split into two vertices $z_1,z_2$. Further we may assume that $u$ is an uncollapsible vertex in $G_1$, $z$ is an uncollapsible vertex in $G_2$. Indeed, $G_1(u_1,u_2)$ is an uncollapsible split as is $G_2(z_1,z_2)$.

Let $v$ be a degree three neighbor of $u_1$ or $u_2$. We may assume without loss of generality that $v$ is a neighbor of $u_1$. Suppose that $v\in V(G_1)$. By induction on $G_1$,  either $v$ is in a triangle $T=vv_1v_2$ in $G_1-u$ or all its neighbor have degree at least four. We may assume the former as the latter is a desirable outcome. Yet as $v$ has degree three in $G_1$, it follows that $v\not\in \{x,y\}$ as otherwise $v$ would have at least degree two in $G_2$ since $G_2(z_1,z_2)$ is an uncollapsible split and hence $v$ would have degree four in $G$, a contradiction. Now $v$ is a in a triangle in $G$ as desired unless $\{v_1,v_2\}=\{x,y\}$. But then $x,y$ have degree at least four in $G$ as desired.

So we may suppose that $v\in V(G_2)$. Hence $u\in\{x,y\}$. By induction applied to $G_2$, either all the neighbors of $v$ in $G_2$ not in $\{z_1,z_2\}$ are degree  at least four, and hence degree at least four in $G$ as desired, or $v$ is in a triangle in $G_2-z$ and hence a triangle in $G-u$ as desired.
\end{proof}

\section{Properties of a Minimum Counterexample $G_0$ to Theorem~\ref{Ore4Best}}

For the remainder of this paper, let $G_0$ be a counterexample to Theorem~\ref{Ore4Best} with a minimum number of vertices. Hence $G_0$ is not $4$-Ore and $p(G_0) > 1.2$.

\subsection{Subsets with Small Potential}

Next we show that the potential of proper subgraphs of $G_0$ is large. This is where we need critical extensions. First an easy lemma.

\begin{lemma}\label{R0}
For all $R\subsetneq G_0$, $|R|\ge 4$, if $R'$ is a critical extension of $R$, then $p(R)\ge p(R') + 3$.
\end{lemma}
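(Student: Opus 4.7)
The plan is to combine Lemma~\ref{ExtForm} with the minimality of $G_0$. Lemma~\ref{ExtForm} gives, for any critical extension $R'$ of $R$ with extender $W$,
\[ p(R') \le p(R) + p(W) - c, \]
where $c = 4.8,\ 6.6,\ 5.4$ according as the core has size $1,\ 2,\ 3$, with an additional $+3$ if the extension is incomplete. Rearranging, $p(R) \ge p(R') - p(W) + c$, so the lemma will follow if $p(W) \le c - 3$; the tightest case is core size $1$, where we need $p(W) \le 1.8$.

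To establish this, I would observe that the extender $W$ is by definition a $4$-critical subgraph of $G_{\phi}(R)$, and because $|R|\ge 4$,
\[ |V(G_{\phi}(R))| = |V(G_0)| - |R| + 3 \le |V(G_0)| - 1. \]
Hence $W$ has strictly fewer vertices than $G_0$, so by the minimality of the counterexample $G_0$ Theorem~\ref{Ore4Best} applies to $W$. Translating Theorem~\ref{Ore4Best} into the potential language via $p(H)=4.8|V(H)|-3|E(H)|+0.6\alpha(D_3(H))$, it reads $p(H)\le 1.8$ when $H$ is $4$-Ore and $p(H)\le 1.2$ otherwise. Either way $p(W)\le 1.8$.

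Plugging this back into Lemma~\ref{ExtForm} yields $p(R) \ge p(R') + (c-1.8) \ge p(R') + 3$, with the stronger lower bounds $p(R')+3.6$ for core size $3$ and $p(R')+4.8$ for core size $2$, and a further gain of $3$ when the extension is incomplete. There is no serious obstacle: the lemma is essentially a one-line corollary of Lemma~\ref{ExtForm}. The only subtleties are verifying the vertex-count inequality $|V(W)|\le|V(G_0)|-1$ (where the hypothesis $|R|\ge 4$ is crucial, and which is what makes the induction on $|V(G)|$ in the minimal-counterexample argument go through) and correctly passing from the edge-density statement of Theorem~\ref{Ore4Best} to the uniform potential bound $p(W)\le 1.8$.
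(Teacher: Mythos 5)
Your proof is correct and follows the same route as the paper: apply Lemma~\ref{ExtForm}, note the worst case is core size one (subtracting $4.8$), and use minimality of $G_0$ to get $p(W)\le 1.8$. The paper states the bound $p(W)\le 1.8$ without elaboration, whereas you carefully justify it via the vertex count $|V(G_\phi(R))|\le |V(G_0)|-1$ (using $|R|\ge 4$) and the translation of Theorem~\ref{Ore4Best} into potential form — a worthwhile bit of extra rigor, but the same argument.
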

\begin{proof}
Let $R'$ be a $W$-critical extension of $R$. By Lemma~\ref{ExtForm}, $p(R')\le p(R)+ p(W) - 4.8$. As $G_0$ is a minimum counterexample, $p(W)\le 1.8$. Thus $p(R')\le p(R)-3$ as desired. 
\end{proof}

This has the following consequence.

\begin{lemma}\label{R1}
For all $R\subsetneq G_0$, $p(R)\ge p(G_0)+3$.
\end{lemma}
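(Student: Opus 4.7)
The plan is to iterate Lemma \ref{R0}. Fix $R\subsetneq V(G_0)$ with $|R|\ge 4$ (the case of smaller $R$ is handled by directly estimating $p(R)$ from below, since $p$ is large on small vertex sets). As $G_0$ is $4$-critical and $R$ is a proper subset, $G_0[R]$ admits a $3$-coloring $\phi$. By Proposition \ref{Phi}, $\chi(G_\phi(R))\ge 4$, so $G_\phi(R)$ contains a $4$-critical subgraph $W$, producing a critical extension $R_1=(W-T)\cup R$, where $T$ is the triangle formed by the three color-class vertices in $G_\phi(R)$.

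The essential observation for iteration is that $R_1\supsetneq R$ strictly. Indeed, the vertices of $W-T$ correspond bijectively to vertices of $V(G_0)\setminus R$, and since $W$ is $4$-critical while every subgraph of $T\cong K_3$ is $3$-colorable, $W$ cannot be contained in $T$; hence $W-T\ne\emptyset$. Now Lemma \ref{R0} applies and yields $p(R)\ge p(R_1)+3$. If $R_1=V(G_0)$, we are done immediately. Otherwise $R_1$ is a proper subset of $V(G_0)$ with $|R_1|\ge |R|+1\ge 5$, and we may repeat the construction with $R_1$ in place of $R$.

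Iterating produces a strictly increasing chain $R=R_0\subsetneq R_1\subsetneq R_2\subsetneq\cdots\subseteq V(G_0)$ with $p(R_i)\ge p(R_{i+1})+3$ at every step. Since $V(G_0)$ is finite, the chain must terminate at some $R_k=V(G_0)$, and telescoping gives $p(R)\ge p(R_k)+3k=p(G_0)+3k\ge p(G_0)+3$. The only substantive step beyond invoking Lemma \ref{R0} is verifying strict growth (which is the main ``obstacle'' but is really just the observation that a $4$-critical extender cannot equal the triangle $T$); the termination is then a finite induction on $|V(G_0)\setminus R|$.
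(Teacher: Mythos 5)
Your argument for the main case $|R|\ge 4$ is correct and runs on the same engine as the paper's, namely Lemma~\ref{R0}; the difference is only in how you reach ``the extension is eventually spanning.'' You iterate: each critical extension strictly enlarges $R$ (justified by the correct observation that a $4$-critical extender $W$ cannot be contained in the triangle $T$, so $W-T\ne\emptyset$), so a finite chain must terminate at $V(G_0)$ and the bound telescopes. The paper instead makes an extremal choice: it takes $R$ to be a proper subset of minimum potential, applies Lemma~\ref{R0} once, and notes that the extension must already be spanning, since otherwise it would be a proper subset of strictly smaller potential. The two devices are interchangeable here; yours costs one extra (easy) verification of strict growth, the paper's costs nothing but delivers the conclusion for \emph{all} proper subsets at once. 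That last point is where your write-up has a soft spot: your dismissal of $|R|\le 3$ by ``$p$ is large on small vertex sets'' gives only $p(R)\ge 4.8$ (a single vertex of degree at least four has potential exactly $4.8$), so to conclude $p(R)\ge p(G_0)+3$ you would need $p(G_0)\le 1.8$, which is not available at this stage --- all that is known is $p(G_0)>1.2$. The paper's minimizing formulation is precisely what lets small sets inherit the bound from the minimizer (modulo the paper's own terse assertion that the minimizer has at least four vertices). If you keep the iteration, either restrict the statement to $|R|\ge 4$ --- which is how the lemma is actually invoked later --- or supply an a priori upper bound on $p(G_0)$ before waving off the small sets.
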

\begin{proof}
Let $R$ be a proper set with minimum potential. Thus $|R|\ge 4$ and $R$ has a critical extension $R'$ as $G$ is $4$-critical. By Lemma~\ref{R0}, $p(R')\le p(R)-3$. As $R$ has minimum potential, it follows that $R'$ is spanning. Thus $p(R)\ge p(R')+3= p(G_0)+3$ as desired.
\end{proof}

Moreover, we can do better if $R$ has an extension that is not total:

\begin{lemma}\label{R2}
If $R\subsetneq G_0$ and $R$ has a critical extension that is not total, then $p(R)\ge p(G_0)+6$.
\end{lemma}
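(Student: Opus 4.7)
The plan is to mirror Lemma~\ref{R1} but parlay the ``not total'' hypothesis into an additional $+3$ of potential. Let $R'$ be a critical extension of $R$ that is not total, with extender $W$. As in Lemma~\ref{R0}, I first want to know $p(W)\le 1.8$. This holds uniformly: if $W$ is $4$-Ore, it is Theorem~\ref{OreComp}; otherwise $W$ is a $4$-critical graph strictly smaller than $G_0$, so by the minimality of $G_0$ as a counterexample the bound of Theorem~\ref{Ore4Best} holds for $W$, which rearranges algebraically to $p(W)\le 1.8$. Plugging this into Lemma~\ref{ExtForm} yields $p(R)\ge p(R')-p(W)+c\ge p(R')+c-1.8$, where $c\in\{4.8,6.6,5.4\}$ depending on the core size. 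The worst case is $c=4.8$, giving the baseline $p(R)\ge p(R')+3$.

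Next I split on how $R'$ fails to be total. If $R'$ is not spanning, then $R'\subsetneq V(G_0)$, so Lemma~\ref{R1} applies to $R'$ and gives $p(R')\ge p(G_0)+3$. Combining with the baseline bound above yields $p(R)\ge p(G_0)+6$, as desired. If instead $R'$ is spanning but the extension is incomplete, then $p(R')=p(G_0)$ and the ``incomplete'' clause of Lemma~\ref{ExtForm} contributes an extra $-3$ to the right-hand side of the core-size inequality; rearranging, $p(R)\ge p(R')-p(W)+c+3\ge p(G_0)-1.8+4.8+3=p(G_0)+6$. The two cases together cover all non-total extensions, and both deliver the stated bound.

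I do not expect a genuine obstacle here; the lemma is essentially Lemma~\ref{R1} with one extra $+3$ paid for by whichever mechanism (non-spanningness via Lemma~\ref{R1} on $R'$, or incompleteness via Lemma~\ref{ExtForm}) makes $R'$ fail to be total. The only bookkeeping is to observe that using the smallest of $\{4.8,6.6,5.4\}$ for $c$ already suffices, so no separate subcase per core size is needed, and to confirm $p(W)\le 1.8$ by the two-sentence dichotomy above.
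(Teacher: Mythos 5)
Your proof is correct and follows essentially the same route as the paper: split on whether the non-total extension fails to be spanning (then apply Lemma~\ref{R0}/\ref{R1} to $R'$) or fails to be complete (then use the incomplete clause of Lemma~\ref{ExtForm} with $p(W)\le 1.8$ from minimality). The only cosmetic difference is that you spell out the $4$-Ore versus non-$4$-Ore dichotomy for $p(W)\le 1.8$, which the paper leaves implicit.
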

\begin{proof}
First suppose $R$ has a $W$-critical extension $R'$ that is not spanning. By Lemma~\ref{R0}, $p(R)\ge p(R')+3$. By Lemma~\ref{R1}, $p(R')\ge p(G_0)+3$ and hence $p(R)\ge p(G_0)+6$ as desired. So we may suppose that $R$ has a $W$-critical extesnion $R'$ that is spanning but not complete. By Lemma~\ref{ExtForm}, $p(R')\le p(R)+ p(W) - 7.8$. As $G_0$ is a minimum counterexample, $p(W)\le 1.8$. Thus $p(R')\le p(R)-6$ as desired. Rewriting, we find that $p(R)\ge p(R')+6 = p(G_0)+6$ as desired.
\end{proof}

\begin{lemma}\label{R3}
If $R\subsetneq G_0$ and $R$ is not collapsible, then $p(R)\ge p(G_0)+3.6$.
\end{lemma}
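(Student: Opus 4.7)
The plan is to invoke Proposition~\ref{Coll}, which characterizes collapsibility via critical extensions. Since $R$ is not collapsible, there must exist a critical extension $R'$ of $R$ (with some extender $W$) that either fails to be total or has core size at least two (recall every critical extension has a nonempty core, so core size is at least one). This dichotomy will drive the argument.

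First I would dispose of the case where $R'$ is not total: Lemma~\ref{R2} immediately yields $p(R) \ge p(G_0) + 6 \ge p(G_0) + 3.6$. So I may assume $R$ has a total critical extension $R'$ with extender $W$ and core size exactly two or exactly three. Because the extension is spanning, $R' = V(G_0)$, so $p(R') = p(G_0)$. Moreover, $W$ is a $4$-critical graph with strictly fewer vertices than $G_0$ (since $|R|\ge 4$ collapses $R$ to a triangle in $G_\phi(R)$), so by minimality of $G_0$ the theorem holds for $W$, giving $p(W) \le 1.8$ (whether or not $W$ is $4$-Ore).

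Now apply Lemma~\ref{ExtForm}. If the core has size two, then $p(G_0) = p(R') \le p(R) + p(W) - 6.6 \le p(R) + 1.8 - 6.6 = p(R) - 4.8$, so $p(R) \ge p(G_0) + 4.8 \ge p(G_0) + 3.6$. If the core has size three, then $p(G_0) = p(R') \le p(R) + p(W) - 5.4 \le p(R) - 3.6$, so $p(R) \ge p(G_0) + 3.6$. In every case the required bound holds.

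There is no real obstacle here; the lemma is essentially a bookkeeping corollary of the previous two lemmas together with the characterization of collapsibility. The only thing to notice is that the bound $3.6$ is sharp precisely because the core-size-three case of Lemma~\ref{ExtForm} loses only $5.4$, which combined with the extremal bound $p(W) \le 1.8$ for $4$-Ore extenders produces exactly the constant $3.6 = 5.4 - 1.8$ appearing in the statement.
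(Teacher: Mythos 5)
Your proof is correct and follows essentially the same route as the paper: dispose of non-total extensions via Lemma~\ref{R2}, then use Proposition~\ref{Coll} to obtain a total extension with core size at least two and apply Lemma~\ref{ExtForm} with $p(W)\le 1.8$ from minimality. The only cosmetic difference is that you split core sizes two and three separately, while the paper simply applies the worst-case bound $5.4$ for core size at least two; both yield the constant $3.6$.
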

\begin{proof}
By Lemma~\ref{R2}, we may assume that every extension of $R$ is total. Since $R$ is not collapsible, there exists by Proposition~\ref{Coll} a critical extension $R'$ of $R$ with extender $W$ whose core has size at least two. By Lemma~\ref{ExtForm}, $p(R')\le p(R)+ p(W) - 5.4$. By the minimality of $G_0$, $p(W)\le 1.8$. Thus $p(R')\le p(R)-3.6$. Rewriting, we find that $p(R)\ge p(R')+3.6=p(G_0)+3.6$ as desired.
\end{proof}

\section{Excluding Identifiable Pairs in $G_0$}

\subsection{Tight Collapsible Sets}

\begin{proposition}\label{TightDeg4}
Let $R$ be a tight collapsible subset of a $4$-critical graph $G$ and $S$ its boundary. If $S$ has size at least three, then every vertex in $S$ has degree at least four in $G$. If $S$ has size two, then at least one vertex has degree at least four in $G$.  
\end{proposition}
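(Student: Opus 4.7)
The plan is to assume for contradiction that some $v\in S$ has $\deg_G(v)\le 3$, so in fact $\deg_G(v)=3$ since $G$ is $4$-critical. Pick any $u\in S\setminus\{v\}$ (possible since $|S|\ge 2$). By tightness, $G[R]+uv$ is $4$-critical, so $v$ has degree at least $3$ in it, forcing $\deg_{G[R]}(v)\ge 2$. Combined with $v\in S$ having at least one neighbor outside $R$, this pins down $\deg_{G[R]}(v)=2$ and exactly one outside neighbor; let $a,b$ be the two neighbors of $v$ in $R$.

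For the case $|S|=2$, I would apply this same reduction to $u$ under the contrary assumption that $u$ also has degree $3$: then $u$ has exactly one outside neighbor as well. By the earlier proposition, the critical complement $W$ of $R$ is $4$-critical, and its collapsed vertex $y$ is adjacent only to those vertices of $V(G)\setminus R$ adjacent to $\partial R$. Hence $\deg_W(y)\le 2$, contradicting $W$ having minimum degree $3$. So at least one of $u,v$ must have degree $\ge 4$.

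For $|S|\ge 3$, the crux is the claim that in every $3$-coloring $\psi$ of $G[R-v]$, all vertices in $\partial R\setminus\{v\}$ receive the same color. To prove it, I would extend $\psi$ to $G[R]$ by choosing a color for $v$ outside $\{\psi(a),\psi(b)\}$. If $\psi(a)=\psi(b)$, both remaining colors give valid extensions, each a $3$-coloring of $G[R]$ with a different color at $v$; by collapsibility, this color equals every boundary vertex's color, forcing $\psi(s)$ to equal two different values for any $s\in\partial R\setminus\{v\}$, which is impossible. So $\psi(a)\ne\psi(b)$, the color $\phi(v)$ is uniquely determined as the third color, and collapsibility then forces $\psi(s)$ to equal that third color for all $s\in\partial R\setminus\{v\}$.

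With the claim in hand, pick distinct $s_1,s_2\in\partial R\setminus\{v\}$, which is possible since $|S|\ge 3$. Then $G[R-v]+s_1s_2$ is $4$-chromatic, so it contains a $4$-critical subgraph $H$. Boundary-independence gives $s_1s_2\notin E(G[R])$, so by tightness $G[R]+s_1s_2$ is $4$-critical. But $H\subseteq G[R-v]+s_1s_2=(G[R]+s_1s_2)-v$ is a proper subgraph of the $4$-critical graph $G[R]+s_1s_2$ (missing $v$) and is therefore $3$-colorable, contradicting that $H$ is $4$-critical. The main obstacle will be the intermediate claim for $|S|\ge 3$; the finishing move of pairing the tightness of $R$ with the $v$-free subgraph $H$ is then direct.
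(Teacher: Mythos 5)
Your proof is correct, but for the main case ($|S|\ge 3$) it takes a much longer route than the paper, and the detour is caused by which edge you choose to add. You add $uv$ \emph{incident to} $v$, which only yields $\deg_{G[R]}(v)\ge 2$ and then forces you to prove the auxiliary claim about colorings of $G[R-v]$ and to invoke tightness a second time to manufacture a $4$-critical subgraph avoiding $v$. The paper instead picks two boundary vertices $u_1,u_2\in S\setminus\{v\}$ (this is exactly where $|S|\ge 3$ is used) and adds $u_1u_2$: then $G[R]+u_1u_2$ is $4$-critical with minimum degree $3$, and since $v$ is not incident to the new edge this reads off $\deg_{G[R]}(v)\ge 3$ directly, whence $\deg_G(v)\ge 4$. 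So the whole case is a two-line degree count. Your argument is valid as written --- the coloring claim is sound (any $3$-coloring of $G[R-v]$ extends over the degree-$2$ vertex $v$, and collapsibility makes the extension constant on $\partial R$), and the final contradiction via the $4$-critical $H\subseteq (G[R]+s_1s_2)-v$ sitting properly inside the $4$-critical $G[R]+s_1s_2$ is clean --- but it buys nothing over the direct count. Your $|S|=2$ argument via the critical complement having a collapsed vertex of degree at most $2$ is essentially the paper's ``too few edges leave $R$'' argument in different clothing, and is fine.
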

\begin{proof}
Suppose $|S|=3$. Let $v\in S$. As $|S|\ge 3$, there exists distinct $u_1,u_2\in S\setminus \{v\}$. As $R$ is tight, $R+u_1u_2$ is $4$-critical. Hence the minimum degree of $R+u_1u_2$ is degree three. This implies that $v$ has degree at least three in $R$. As $v$ is the boundary of $R$, it follows that $v$ has degree at least four in $G$.

So we may suppose that $S=\{u,v\}$. As $R+uv$ is $4$-critical, $u$ and $v$ have degree at least two in $R$. If they both have degree three in $G$, then there exists an edge-cut of size two in $R+uv$, which is impossible. So either $u$ or $v$ has degree at least four in $G$ as desired.
\end{proof}

\begin{definition}
We say $u,v\in V(G)$ is an \emph{identifiable pair} in a proper subset $R$ of $V(G)$ if $u,v \in \partial R$ and $R+uv$ is not $3$-colorable. We say an identifiable pair $(u,v,R)$ is \emph{minimal} if there does not exist $(u',v',R')$ such that $u',v'$ is an identifiable pair in $R'$ and $R'\subsetneq R$.
\end{definition}

Here is a useful lemma:

\begin{lemma}\label{IdPair4Ore}
Let $G$ be a $4$-critical graph. If $(u,v,R)$ is a minimal identifiable pair and $R+uv$ is $4$-Ore, then either there exists a $2$-separation of $G$ or $R$ is not collapsible.
\end{lemma}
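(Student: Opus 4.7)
The plan is to derive a contradiction by assuming $R$ is collapsible and $G$ has no $2$-separation, then producing an identifiable pair in a proper subset of $R$ to violate the minimality of $(u,v,R)$.

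First I would show that $R$ is tight collapsible. For any non-adjacent $u', v' \in \partial R$, collapsibility forces $\phi(u') = \phi(v')$ in every $3$-coloring of $G[R]$, so $R + u'v'$ is not $3$-colorable; if $R + u'v'$ were not $4$-critical, a proper $4$-critical subgraph $C$ using $u'v'$ would give an identifiable pair $(u', v', V(C))$ with $V(C) \subsetneq R$, contradicting minimality. Next, if $|\partial R| \le 2$ then $\partial R = \{u, v\}$; since $|R| \ge 4$ (as $R + uv$ is $4$-Ore) and $R \subsetneq V(G)$, this yields a $2$-separation of $G$, contradiction. Finally, if $R + uv = K_4$ with additional vertices $a, b$, then every $3$-coloring of $R = K_4 - uv$ forces $\phi(u) = \phi(v)$ while $\{\phi(a), \phi(b)\}$ takes the other two colors, so collapsibility forbids $a, b \in \partial R$, again giving $\partial R = \{u, v\}$ and a contradiction. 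Hence $|\partial R| \ge 3$ and $R + uv \ne K_4$.

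I would then use the Ore-decomposition $R + uv = H_1 \oplus H_2$, with $H_1$ the edge-side (replaced edge $ab$) and $H_2$ the split-side (split vertex $z$, identified $z_1 = a, z_2 = b$). In \emph{Case A}, where $uv$ corresponds to an edge of $H_2'$, I set $R' = V(H_1) \subsetneq V(R)$; then $G[R'] = H_1 - ab$, the vertices $a, b$ are non-adjacent in $G[R']$ and both lie in $\partial_G R'$ (each has $R$-neighbors in $V(H_2) \setminus \{z\}$), and $R' + ab = H_1$ is not $3$-colorable since $H_1$ is $4$-Ore. So $(a, b, R')$ is an identifiable pair in a proper subset of $R$, contradicting minimality. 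In \emph{Case B}, where $uv \in E(H_1) \setminus \{ab\}$, let $S = V(H_2) \setminus \{z\}$. If $S \cap \partial_G R = \emptyset$, then the $G$-neighbors of $S$ lie in $S \cup \{a, b\}$, so $\{a, b\}$ is a $2$-separation of $G$, contradiction. Otherwise I fix $w \in S \cap \partial_G R$ and analyze the forced color relationship: $H_2'$ forces $\phi(a) \ne \phi(b)$ in every $3$-coloring, and $4$-criticality of $H_1$ forces $\phi(u) = \phi(v)$ in every $3$-coloring of $H_1 - \{ab, uv\}$ with $\phi(a) \ne \phi(b)$ (else adding $uv$ back would give a $3$-coloring of $H_1 - ab$ with $\phi(a) \ne \phi(b)$, contradicting that $H_1 - ab$ only admits colorings with $\phi(a) = \phi(b)$). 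Combined with collapsibility giving $\phi(w) = \phi(u) = \phi(v)$, this pins the common color of $\phi(w)$, in every $3$-coloring of $H_2'$, to be a fixed function of $(\phi(a), \phi(b))$ equal to $\phi(a)$, $\phi(b)$, or the third color.

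In the first two sub-cases, $(w, a, V(H_2'))$ or $(w, b, V(H_2'))$ is an identifiable pair in the proper subset $V(H_2') \subsetneq V(R)$ (since $V(H_1) \setminus \{a, b\}$ is nonempty), contradicting minimality. In the ``third color'' sub-case I take $R^* = V(H_2') \cup \{u\}$ (proper since $v \notin R^*$, where $\{u, v\} \cap \{a, b\} = \emptyset$ in this sub-case) and, when $u$ is adjacent to both $a$ and $b$ in $R$, verify that $R^* + uw$ is not $3$-colorable because $\phi(u)$ would need to avoid $\phi(a), \phi(b), \phi(w)$, exhausting all three colors; this yields the identifiable pair $(u, w, R^*)$. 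The main obstacle is this ``third color'' sub-case when $u$ is adjacent to at most one of $a, b$ in $R$: the simple choice $R^* = V(H_2') \cup \{u\}$ no longer suffices, since $u$'s $R^*$-constraints do not alone force $\phi(u)$ to be the third color. One must enlarge $R^*$ further inside $V(H_1)$ (recursively using the Ore-decomposition of $H_1$) to propagate the forced ``third color'' relationship on $\phi(u)$ coming from the structure of $H_1 - \{ab, uv\}$; the base case $H_1 = K_4$ is immediate since $u \in V(H_1) \setminus \{a, b\}$ is then automatically adjacent to both $a, b$ in the $K_4$-structure of $H_1$, hence adjacent to both in $R$ after removing $ab$ and $uv$.
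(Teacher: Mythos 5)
Your opening moves match the paper's: you show $R$ is tight, dispose of $K_4$ and of small boundaries via a $2$-separation, and your Case A (the pair $(a,b,V(H_1))$ when $uv$ lies in the split side) is exactly the paper's observation that $uv$ cannot live in a split side of any Ore-decomposition of $R+uv$. Your derivation that collapsibility forces $\phi(w)=\phi(u)$ to be a fixed function of $(\phi(a),\phi(b))$ is also sound (it needs the remark that colorings of the two sides mix freely subject to agreement on $\{a,b\}$, and that all six ordered pairs of distinct colors occur on each side, but that follows from $3$-colorability of $G[R]$ plus color permutations). The sub-cases $f=\phi(a)$ and $f=\phi(b)$, and the third-color sub-case with $u$ adjacent to both $a$ and $b$, are handled correctly.

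However, the final sub-case — third color, $u$ adjacent to at most one of $a,b$ — is a genuine gap, and it is the heart of the lemma. The relation ``$\phi(u)$ avoids both $\phi(a)$ and $\phi(b)$'' is a global property of $H_1-ab-uv$ (e.g.\ when $uv$ sits in a sub-piece of $H_1$ containing neither $a$ nor $b$), so there is no obvious proper subset $R^*\subsetneq R$ excluding $v$ on which this forcing survives; your proposed recursion into the Ore-decomposition of $H_1$ is not carried out, and it is not clear it closes (the deleted edge $ab$ may land inside either piece of the decomposition of $H_1$, breaking the non-$3$-colorability claims for the sub-pieces, and even at the bottom $K_4$ the ``edges'' from $u$ to its companions may be replaced edges, i.e.\ gadgets, so $u$ need not be adjacent to anything useful). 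The paper resolves exactly this difficulty by a different endgame: it first normalizes the decomposition, choosing a minimal $4$-Ore $G'$ with $uv\in E(G')$ such that $K$ is built from $G'$ by compositions keeping $G'$ in the edge side, forcing $G'=K_4$; then, rather than hunting for a smaller identifiable pair, it picks a boundary vertex $w\notin V(G')$, uses tightness to conclude $R+vw$ is $4$-critical, $3$-colors $(R+vw)\setminus\{u\}$, and extends the coloring to $u$ by recoloring the gadget interiors on the replaced edges $ua,ub$ — a direct contradiction with $4$-criticality. You would need either to import that argument or to make your recursion precise; as written the proof is incomplete.
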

\begin{proof}
Suppose not. That is, we may assume that $R$ is collapsible and that there does not exist a $2$-separation of $G$. As $(u,v,R)$ is minimal, it follows that $R$ is a tight collapsible set. Let $K=R+uv$ which by assumption is $4$-Ore. Let $S$ be the boundary of $R$ and let $W$ be the critical complement of $R$. Note that $K\ne K_4$ as otherwise $S$ is a $2$-separation of $G$, a contradiction. Thus $K$ is the Ore-composition of two $4$-Ore graphs. Furthermore, note that $K$ is not the Ore-composition of $4$-Ore graphs $G_1$ and $G_2$ such that $uv\in E(G_2)$ where $G_2$ is the split side of the composition, as otherwise $(x,y,V(G_1))$ contradicts the choice of $(u,v,R)$ since $V(G_1)\subsetneq R$.

Choose $G'$ such that $G'$ is $4$-Ore, $uv\in E(G')$ and $K$ is obatined from $G'$ by repeated Ore-compositions with other $4$-Ore graphs where $G'$ is always contained in the edge-side of the composition, and subject to that, $|V(G')|$ is minimized. Suppose $G'\ne K_4$. Hence $G'$ is the Ore-composition of two $4$-Ore graphs. Let $G_1$ be the edge-side and $G_2$ the split-side of such a composition. As noted above, $uv\not\in E(G_2)$ and hence $uv\in E(G_1)$. But then $G_1$ contradicts the choice of $G'$ since $|V(G_1')| < |V(G_1)|$.

So we may assume that $G'=K_4$.  As there does not exist a $2$-separation of $G$, it follows that $S\setminus V(G') \ne \emptyset$. Let $w\in S\setminus V(G')$. Hence $G$ the Ore-composition of split-side $G_2$ with $v\in V(G_2)$ and edge-side $G'$ whose replaced edge $xy$ is not equal to $uv$. Note then that in every $3$-coloring of $G_2-xy$, $x$ and $y$ receive different colors. We may assume without loss of generality that $u\not\in\{x,y\}$. As $R$ is tight, $K'=R+vw$ is $4$-critical. But then $K'\setminus\{u\}$ has a $3$-coloring $\phi$. Let $a,b$ be the other vertices in $G'\setminus\{u,v\}$. Note that $\phi(a),\phi(b), \phi(v)$ are all distinct. Now we can extend $\phi$ to a $3$-coloring of $K'$ by letting $\phi(u)=\phi(v)$ and possibly recoloring the vertices inside the replaced edges $ua,ub$ if they exist. The latter can be done since $\phi(u)$ is distinct from $\phi(a)$ and $\phi(b)$.
\end{proof}

\begin{lemma}\label{2Sep2}
There does not exist a $2$-separation of $G_0$. That is, the boundary of every proper subset $R$ with $|R|\ge 3$ has size at least three.
\end{lemma}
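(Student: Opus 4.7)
The plan is to argue by contradiction. Suppose that some proper subset $R\subsetneq V(G_0)$ with $|R|\ge 3$ has boundary $S$ of size at most two. Because $G_0$ is $4$-critical it is $2$-connected, so $|S|=2$; write $S=\{x,y\}$. I would first clear away the degenerate configurations: if $|R\setminus S|=1$ then the unique interior vertex of $R$ has all neighbours in $S$ and hence degree at most two, and if $|V(G_0)\setminus R|=1$ then the lone vertex outside $R$ has degree at most $|S|\le 2$, both contradicting $\delta(G_0)\ge 3$. This leaves $|R|\ge 4$ and $|V(G_0)\setminus R|\ge 2$. A further brief case check rules out the one remaining awkward configuration where $|R|=|V(G_0)\setminus R|+2=4$: there, both sides are isomorphic to $K_4-xy$, so each side forces $x$ and $y$ to receive equal colours in every $3$-colouring, producing an explicit $3$-colouring of $G_0$ and contradicting that $G_0$ is $4$-chromatic.

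Having made these reductions, Proposition~\ref{OreSep} applies, and $G_0$ is the Ore-composition of two $4$-critical graphs $G_1$ and $G_2$, each with strictly fewer vertices than $G_0$. By the minimality of the counterexample, both factors satisfy Theorem~\ref{Ore4Best}, which, translated via $p(G)=4.8|V(G)|-3|E(G)|+0.6\alpha(D_3(G))$, gives $p(G_i)\le 1.8$ when $G_i$ is $4$-Ore and $p(G_i)\le 1.2$ otherwise. Combining these bounds with Lemma~\ref{PotentialOreComp} yields
$$p(G_0)\le p(G_1)+p(G_2)-1.8.$$

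The argument then finishes with a two-case split. If both $G_1$ and $G_2$ are $4$-Ore, then $G_0$, being their Ore-composition, is $4$-Ore by the very definition of a $4$-Ore graph, contradicting the assumption that $G_0$ is not $4$-Ore. Otherwise at least one of $p(G_1),p(G_2)$ is at most $1.2$ while the other is at most $1.8$, so $p(G_0)\le 1.2+1.8-1.8=1.2$, contradicting $p(G_0)>1.2$.

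The real obstacle in this proof is the preliminary bookkeeping that guarantees we are in a setting where Proposition~\ref{OreSep} genuinely produces two $4$-critical Ore-factors; once that is done, the rest of the argument is a transparent combination of the minimality of $G_0$, Theorem~\ref{OreComp}, and Lemma~\ref{PotentialOreComp}.
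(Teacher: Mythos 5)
Your proposal is correct and follows essentially the same route as the paper: invoke Proposition~\ref{OreSep} to write $G_0$ as an Ore-composition of two $4$-critical graphs, apply Lemma~\ref{PotentialOreComp} together with the minimality of $G_0$ (at least one factor is not $4$-Ore, hence has potential at most $1.2$), and conclude $p(G_0)\le 1.2$. The only difference is your extra preliminary case analysis; note that the ``awkward configuration'' with $|R|=4$ is already handled by Proposition~\ref{OreSep} (and the sub-claim there that both sides must be $K_4-xy$ does not actually follow), so that detour can simply be dropped.
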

\begin{proof}
Suppose there exists a $2$-separation $\{x,y\}$ of $G_0$. Then $G$ is an Ore-composition of two graphs $G_1,G_2$ where the replaced edge is $xy$. By Lemma~\ref{PotentialOreComp}, $p(G_0)\le p(G_1)+p(G_2)-1.8$. By the minimality of $G_0$, $p(G_1),p(G_2)\le 1.8$. Furthermore as $G_0$ is not $4$-Ore, at least one of $G_1,G_2$ is not $4$-Ore and thus by the minimality of $G_0$ has potential at most $1.2$. Thus $p(G_0)\le 1.2$, a contradiction.
\end{proof}


\begin{lemma}\label{IdentifiablePair}
There does not exist an identifiable pair of vertices of $G_0$. 
\end{lemma}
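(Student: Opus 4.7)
The plan is to assume for contradiction that $(u,v,R)$ is a minimal identifiable pair of $G_0$ and combine minimality of $G_0$ with the lemmas of Sections~2--3 to derive a contradiction with $p(G_0) > 1.2$. First I would argue that $R+uv$ is itself $4$-critical: since $R+uv$ is not $3$-colorable it contains a $4$-critical subgraph $H$; the edge $uv$ must lie in $E(H)$ (else $H$ is a proper $4$-critical subgraph of $G_0$, impossible), and if $V(H)\subsetneq V(R)$ then $(u,v,V(H))$ would be a strictly smaller identifiable pair (as $u$ and $v$ keep their outside-$R$ neighbors, so remain in the boundary of $V(H)$), contradicting minimality. An analogous edge-minimality check yields $H=R+uv$; in particular $uv\notin E(G_0)$.

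Now by minimality of $G_0$, Theorem~\ref{Ore4Best} applied to $R+uv$ gives $p(R+uv)\le 1.8$ if $R+uv$ is $4$-Ore and $p(R+uv)\le 1.2$ otherwise. The key comparison is
\[
p(R)-p(R+uv)=3+0.6\bigl[\alpha(G_0[D_3(G_0)\cap R])-\alpha(D_3(R+uv))\bigr],
\]
and the bracketed $\alpha$-difference is at most $1$: since boundary degrees in $R+uv$ are bounded above by those in $G_0$, we have $D_3(G_0)\cap R\subseteq D_3(R+uv)$, and any independent set of $D_3(G_0)\cap R$ becomes independent in $D_3(R+uv)$ after removing at most one of $\{u,v\}$ (the only new edge is $uv$). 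Hence $p(R)\le p(R+uv)+3.6$, giving $p(R)\le 5.4$ in the $4$-Ore case and $p(R)\le 4.8$ otherwise.

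By Lemma~\ref{IdPair4Ore} combined with Lemma~\ref{2Sep2}, when $R+uv$ is $4$-Ore the set $R$ is not collapsible, so Lemma~\ref{R3} yields $p(R)\ge p(G_0)+3.6$. In the non-$4$-Ore case an analogous argument applies: if $R$ were collapsible with critical complement $W$, then $W$ being non-$4$-Ore gives $p(W)\le 1.2$ and Lemma~\ref{CollPot} already yields $p(R)\ge p(G_0)+3.6$, while $W$ being $4$-Ore recovers an Ore-composition of $R+uv$ and $W$ that forces a $2$-separation of $G_0$, contradicting Lemma~\ref{2Sep2}. Thus in every case $p(R)\ge p(G_0)+3.6$, so the non-$4$-Ore case yields $p(G_0)\le 4.8-3.6=1.2$, contradicting the assumption $p(G_0)>1.2$.

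The main obstacle is the tight $4$-Ore case, where the naive bounds only give $p(G_0)\le 1.8$. To close the remaining $0.6$ gap, tightness forces $p(R+uv)=1.8$, both $u$ and $v$ of degree exactly $3$ in $G_0$ (each with a unique neighbor outside $R$), and the extension witnessing Lemma~\ref{R3} to be total with core of size $3$. The plan is to apply Lemma~\ref{Ind} to the $4$-Ore graph $R+uv$ at the vertex $u$ (and $v$), combined with $|\partial R|\ge 3$ from Lemma~\ref{2Sep2}, to locate a vertex $w\in\partial R\setminus\{u,v\}$ that extends a suitably chosen maximum independent set of $D_3(R+uv)$; this strengthens the $\alpha$-comparison to $\le 0$, yields $p(R)\le 4.8$, and produces the contradiction. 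The bookkeeping in this final step — choosing the right independent set and exploiting the Ore-decomposition structure forced by $p(R+uv)=1.8$ — is the hardest part of the proof.
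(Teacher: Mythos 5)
Your reduction of the non-$4$-Ore case is essentially sound (and matches the paper's bookkeeping), but the heart of the lemma --- the case where $K=R+uv$ is $4$-Ore with $p(K)=1.8$ --- is not actually proved: you correctly identify the $0.6$ gap and then offer only a plan, and that plan does not work as stated. Applying Lemma~\ref{Ind} to $K$ at $u$ tells you that every maximum independent set of $D_3(K)$ meets $\bar{N_K(u)}$; but in the tight configuration the independent set $I-u$ (where $I$ is a maximum independent set of $D_3(G_0)\cap R$ containing both $u$ and $v$) meets $\bar{N_K(u)}$ precisely at $v$, since $uv$ is an edge of $K$ but not of $G_0$. So the conclusion of Lemma~\ref{Ind} is consistent with the bad case and does not improve the $\alpha$-comparison. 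The paper's actual argument is structurally different and substantially longer: after forcing equality everywhere (so $u,v$ have degree three, the extension is total with core size three, and $K,W$ are $4$-Ore with potential $1.8$), it splits on $K=K_4$ versus $K\ne K_4$; in the latter case it descends through the Ore-decomposition of $K$ to locate a copy of $K_4$ containing $uv$ whose other two vertices $x,y$ have degree at least four, passes to the graph obtained from $G_0$ by identifying $u$ with $v$, extracts a $4$-critical subgraph $W'$ there, and only then applies Lemma~\ref{Ind} --- to $W'$ (or to $W$ when $K=K_4$) at the identified vertex $w$ --- to recover the missing $0.6$. None of this is present in your proposal.

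A second, smaller error: in the collapsible, non-$4$-Ore subcase you claim that $W$ being $4$-Ore ``recovers an Ore-composition of $R+uv$ and $W$ that forces a $2$-separation.'' It does not: an Ore-composition requires the collapsed vertex to be split into exactly two pieces, i.e.\ $|\partial R|=2$, whereas Lemma~\ref{2Sep2} guarantees $|\partial R|\ge 3$. This subcase is nevertheless repairable without any separation argument: by minimality $R$ is tight, so Proposition~\ref{TightDeg4} gives that every boundary vertex (in particular $u$ and $v$) has degree at least four in $G_0$, which improves your $\alpha$-comparison from $+1$ to $0$ and hence $p(R)\le p(K)+3\le 4.2$; then Lemma~\ref{CollPot} with $p(W)\le 1.8$ yields $p(G_0)\le p(K)\le 1.2$. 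This is exactly how the paper handles the collapsible case, and it is the degree-four observation from tightness --- not a $2$-separation --- that makes it go through.
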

\begin{proof}
Let $u,v$ be an identifiable pair in a proper subset $R$ of $V(G_0)$ such that $R+uv$ is not $3$-colorable where we choose $u, v,$ and $R$ such that $|R|$ is minimium. 

Suppose $R$ is collapsible. As $R$ was chosen to have minimum size, it follows that $R$ is tight. Let $K=R+uv$, $S$ be the boundary of $R$ and $W$ be the critical complement of $R$. BY Lemma~\ref{2Sep2}, the boundary of $R$ has size at least three. By Proposition~\ref{TightDeg4}, all vertices in the boundary of $R$ have degree at least four in $G$. In particular, $u$ and $v$ have degree at least four in $G$. Thus every maximum independent set of $D_3(G)$ intersect $R$ is also an independent set in $D_3(K)$. This observation combined with Lemma~\ref{ExtForm} implies that $p(G_0)\le p(K)+3+p(W)-4.8$ where $3$ is added since we delete the edge $uv$ to obtain $R$.  Thus $p(G_0)\le p(K)+p(W)-1.8$. Since $G$ is a minimum counterexample, $p(W),p(K)\le 1.8$. If $K$ is not $4$-Ore, then $p(K)\le 1.2$ and $p(G)\le 1.2+1.8-1.8=1.2$, a contradiction. So we may assume that $K$ is $4$-Ore. By Lemma~\ref{IdPair4Ore}, there exists a $2$-separation of $G$ contradicting Lemma~\ref{2Sep2}.

So we may assume that $R$ is not collapsible. Let $K=R+uv$. Let $R'$ be a critical extension of $R$ with extender $W$ that is either not total or has core size at least two. Note that $p(R)\le p(K) + 3 + .6(\alpha(D_3(G_0[R]))-\alpha(D_3(K))$. Yet $\alpha(D_3(R))\le \alpha(D_3(K))+1$ with equality only if $u$ and $v$ are degree three. Thus $p(R)\le p(K)+3.6$. By Lemma~\ref{ExtForm}, $p(G)\le p(R)+p(W)-5.4 \le p(K)+p(W)-1.8$. By the minimality of $G_0$, $p(K), p(W) \le 1.8$. As $G_0$ is a counterexample, equality holds throughout. Hence it follows that $p(K)=p(W)=1.8$ and so by the minimality of $G_0$, $K$ and $W$ are $4$-Ore. Furthermore, equality implies that $R'$ is a total extension with core size three and also that $\alpha(D_3(R))=\alpha(D_3(K))+1$. The last condition implies that $u$ and $v$ are degree three in $G_0$. Hence $u$ and $v$ are also degree three in $K$. 





Let us further suppose that $K=K_4$. Since $W$ has core size three, it follows that the other two vertices of $K$ are in $W$ and that $W$ is formed by simply identifying $u$ and $v$ to a new vertex $w$. But then $w$ is a degree four vertex in a $4$-Ore graph with potential $1.8$. By Lemma~\ref{Ind}, every maximum independent set of $D_3(W)$ intersects $N(w)$. Thus $\alpha(D_3(G_0))\le \alpha(D_3(W))+1$ (instead of the naive bound $\alpha(D_3(W))+2$ used above). Yet $|V(G_0)|=|V(W)|+1$ and $|E(G)|=|E(W)|+2$. Hence $p(G_0)\le p(W) + 4.8 - 6 +.6 = p(W)-.6 \le 1.2$, a contradiction.

So we may assume that $K\ne K_4$. Thus $K$ is the Ore-composition of two $4$-Ore graphs. Furthermore, $K$ is not the Ore-composition of two graphs $K_1,K_2$ such that $uv\in E(K_2)$ where $K_2$ is the split-side of the composition, as otherwise $(x,y,V(K_1))$ contradicts the choice of $(u,v,R)$ since $V(K_1) \subsetneq R$.

Now choose $K_1$ such that $K_1$ is $4$-Ore, $uv\in E(K_1)$ and $K$ is an Ore-composition of $K_1$ and $K_2$, and subject to that, $|V(K_1)|$ is minimized. It follows from the comment above that $K_1$ is the edge-side of the composition. Suppose $K_1\ne K_4$. But then $K_1$ is the Ore-composition of two $4$-Ore graphs, an edge-side $K_1'$ and a split-side $K_2'$. As noted above, $uv\not\in E(K_2')$ and hence $uv\in E(K_1')$. But then $K_1'$ contradicts the choice of $K_1$ since $|V(K_1')| < |V(K_1)|$. 

So we may assume that $K_1=K_4$. Let $xy$ be the replaced edge of $K_1$, $z$ the split vertex of $K_2$ and $z_1$, $z_2$ the vertices into which $z$ is split in $K$. We claim that $d_{K_2}(z_1), d_{K_2}(z_2)\ge 2$. Suppose not. We may assume without loss of generality that $d_{G_2}(z_1)=1$. Let $w$ be the neighbor of $z_1$ in $G_2$. But now $w$ and $z_2$ form a $2$-cut of $K$. Indeed they yield an Ore-decomposition of $K$ where the split-side is $K_4$ and yet contains the edge $uv$, a contradiction. This proves the claim that $d_{K_2}(z_1), d_{K_2}(z_2)\ge 2$.

It follows that $x,y$ are degree at least four in $K$. Thus $u,v\not\in\{x,y\}$. As $K_1=K_4$, $V(K_1)=\{x,y,u,v\}$. Let $G'$ be obtained from $G_0$ by identifying $u$ and $v$ to a vertex $w$. As $u$ and $v$ must receive the same color in every $3$-coloring of $R$ it follows that $G'$ contains a $4$-critical subgraph $W'$. Moreover, $w\in W'$. Yet $w$ has degree at least three in $W'$. Thus at least one of $x$ or $y$ must be in $W$. Suppose without loss of generality that $x\in W$. 

Suppose $y\not\in W'$. Let $R_0=W'\setminus w \cup \{u,v\}$. Now $p(R_0)\le p(W') + 4.8-3 + 1.2 = p(W)+3$. Now $p(W') \le 1.8$ by the minimality of $G_0$. So $p(R_0)\le 4.8$. By Lemma~\ref{R3}, $p(G_0)\le 1.2$, a contradiction. 

Finally we may suppose that $y\in W'$. In this case, $p(R_0)\le p(W)' + 4.8-6+1.2= p(W')$. It follows from Lemma~\ref{R3}, that $R_0=V(G_0)$. As $G_0$ is a minimum counterexample and $p(G_0) > 1.2$, we find that $p(W')=1.8$ and $W'$ is $4$-Ore. Yet $w$ is a vertex of degree four in a $4$-Ore-graph with potential $1.8$. By Lemma~\ref{Ind}, every maximum independent set of $D_3(W')$ intersects $N(w)$. Thus $\alpha(D_3(G_0))\le \alpha(D_3(W'))+1$. Yet $|V(G_0)|=|V(W')|+1$ and $|E(G_0)|=|E(W')|+2$. Calculating, we find that $p(G_0)\le p(W')+4.8-6+.6=p(W')-.6\le 1.2$, a contradiction.
\end{proof}

Lemma~\ref{IdentifiablePair} has many consequences which we now list.

\begin{corollary}\label{NoColl}
There does not exist a collapsible subset of $G_0$.
\end{corollary}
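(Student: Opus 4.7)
The plan is to derive a contradiction directly from Lemma~\ref{IdentifiablePair} by showing that any collapsible subset would yield an identifiable pair. Suppose for contradiction that $R$ is a collapsible subset of $G_0$. Since $R \subsetneq V(G_0)$ and $G_0$ is $4$-critical, $G_0[R]$ is $3$-colorable, so collapsibility has real content: in every $3$-coloring of $G_0[R]$, all vertices of $\partial R$ receive a single common color. Because adjacent vertices cannot share a color, this forces $\partial R$ to be an independent set in $G_0$.

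Next I would confirm that $|\partial R| \ge 2$ so that we actually have a candidate pair. By Lemma~\ref{2Sep2}, every proper subset of size at least $3$ has boundary of size at least $3$, so I only need to dispense with $|R|=2$. If $|R|=2$, both vertices of $R$ must lie in $\partial R$ (otherwise one would have $G_0$-degree at most $1$, violating $4$-criticality), and $G_0[R]$ (whether an edge or two isolated vertices) trivially admits a $3$-coloring that assigns the two boundary vertices distinct colors, contradicting collapsibility. Hence $|R|\ge 3$ and $|\partial R| \ge 3$, so I may choose two distinct $u, v \in \partial R$.

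Finally, since $\partial R$ is independent, $uv \notin E(G_0)$, and adding the edge $uv$ to $G_0[R]$ is meaningful. Any $3$-coloring of $G_0[R]+uv$ would restrict to a $3$-coloring of $G_0[R]$ giving $u$ and $v$ distinct colors, contradicting collapsibility. Hence $G_0[R]+uv$ is not $3$-colorable, so $(u,v,R)$ is an identifiable pair in $G_0$ — contradicting Lemma~\ref{IdentifiablePair}. There is no substantive obstacle; the corollary is essentially a one-step consequence of the previous lemma, with only the $|R|=2$ edge case and the non-adjacency of $u,v$ requiring brief explicit verification.
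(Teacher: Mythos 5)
Your proof is correct and follows the same route as the paper, which disposes of the corollary in one line by noting (as recorded in the definition of collapsible sets) that any two boundary vertices of a collapsible set form an identifiable pair. Your extra care with the $|R|=2$ case and the independence of the boundary just makes explicit what the paper leaves implicit.
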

\begin{proof}
By definition, every collapsible subset contains an identifiable pair contradicting Lemma~\ref{IdentifiablePair}.
\end{proof}

\begin{corollary}\label{NoDiamond}
$G_0$ does not contain a subgraph isomorphic to $K_4-e$.
\end{corollary}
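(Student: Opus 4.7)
The plan is to show that a $K_4-e$ subgraph of $G_0$ yields an identifiable pair, which then contradicts Lemma~\ref{IdentifiablePair}. Suppose for contradiction that $G_0$ contains such a subgraph on vertex set $R = \{a,b,c,d\}$, with $a$ and $b$ the two vertices of degree three in the $K_4-e$ and $c,d$ the two of degree two; thus $ab, ac, ad, bc, bd \in E(G_0)$, while $cd$ may or may not lie in $E(G_0)$.

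First I would rule out $R = V(G_0)$. If $cd \notin E(G_0)$ then $G_0 \cong K_4-e$, which is $3$-colorable and thus not $4$-critical. If $cd \in E(G_0)$ then $G_0 \cong K_4$, which is $4$-Ore, contradicting our standing hypothesis that $G_0$ is not $4$-Ore. So $R$ is a proper subset of $V(G_0)$.

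Next I would split on whether $cd$ is an edge of $G_0$. If $cd \notin E(G_0)$, then each of $c$ and $d$ has only two neighbors in $R$ but degree at least three in $G_0$ by $4$-criticality, so each must have a neighbor outside $R$; thus $c,d \in \partial R$. Moreover $G_0[R] + cd = K_4$ is not $3$-colorable, so $(c,d,R)$ is an identifiable pair. If instead $cd \in E(G_0)$, then $G_0[R] = K_4$; by Lemma~\ref{2Sep2}, $|\partial R| \ge 3$, so any two distinct $u,v \in \partial R$ form an identifiable pair in $R$, because $uv \in E(G_0[R])$ forces $G_0[R] + uv = K_4$, again not $3$-colorable. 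Either way we contradict Lemma~\ref{IdentifiablePair}. I expect no real obstacle here: the substantive work is already contained in Lemmas~\ref{IdentifiablePair} and~\ref{2Sep2}, and this corollary is essentially a short case analysis driven by the fact that $K_4$ is the unique obstruction to $3$-coloring $R$.
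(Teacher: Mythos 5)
Your proof is correct and follows the same route as the paper, which simply observes that the two degree-two vertices of the $K_4-e$ are an identifiable pair contradicting Lemma~\ref{IdentifiablePair}. Your extra case $cd\in E(G_0)$ is handled fine (and could be dispatched even faster: a proper $K_4$ subgraph is a non-$3$-colorable proper subgraph, contradicting $4$-criticality directly), but it adds nothing beyond the paper's one-line argument.
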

\begin{proof}
Otherwise the two vertices of degree two in that subgraph are an identifiable pair contradicting Lemma~\ref{IdentifiablePair}.
\end{proof}

\begin{corollary}\label{NoCoColl}
There does not exist a nontrivial cocollapsible subset of $G_0$.
\end{corollary}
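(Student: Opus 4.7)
The plan is to deduce this immediately from the two preceding corollaries and Proposition~\ref{NontrivialCocoll}. Suppose for contradiction that $G_0$ contains a nontrivial cocollapsible subset $R$. Then by Proposition~\ref{NontrivialCocoll}, the complement $V(G_0)\setminus R$ is a collapsible subset of $G_0$. But this directly contradicts Corollary~\ref{NoColl}, which asserts that $G_0$ contains no collapsible subset at all.

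The only subtlety to check is that the complement is in fact a proper subset (so that "collapsible" is defined as in the earlier definition, which required $R\subsetneq V(G)$ with $|R|\ge 2$). Since $R$ is nontrivial, by definition $|V(G_0)\setminus R|>1$, so the complement has at least two vertices, and since $R$ itself is nonempty (cocollapsible sets have a boundary, hence are nonempty), the complement is a proper subset of $V(G_0)$. Thus Proposition~\ref{NontrivialCocoll} applies, and the contradiction with Corollary~\ref{NoColl} completes the proof.

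There is really no obstacle here: the hard work has already been done in establishing Lemma~\ref{IdentifiablePair} (which gives Corollary~\ref{NoColl}) and in Proposition~\ref{NontrivialCocoll}. This corollary is simply packaging the combination. The proof will be a one-liner of the form: ``By Proposition~\ref{NontrivialCocoll}, the complement of any nontrivial cocollapsible subset is collapsible, contradicting Corollary~\ref{NoColl}.''
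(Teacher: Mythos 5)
Your proposal is correct and matches the paper's proof exactly: the paper also just invokes Proposition~\ref{NontrivialCocoll} to conclude that the complement of a nontrivial cocollapsible set is collapsible, contradicting Corollary~\ref{NoColl}. Your extra sanity check that the complement is a proper subset of size at least two is a reasonable bit of due diligence but not something the paper dwells on.
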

\begin{proof}
Suppose there exists a nontrivial cocollapsible subset $R$ of $G_0$. As $R$ is nontrivial, $|G_0\setminus R|\ne 1$. By Proposition~\ref{NontrivialCocoll}, $G\setminus R$ is collapsible contradicting Lemma~\ref{NoColl}.
\end{proof}

\begin{corollary}\label{NoCycle3}
$G_0$ does not contain a cycle of vertices of degree three. Hence every component of $D_3(G_0)$ is a tree.
\end{corollary}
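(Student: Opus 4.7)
The plan is to show that any shortest cycle of degree-three vertices in $G_0$ is a nontrivial cocollapsible subset, which directly contradicts Corollary~\ref{NoCoColl}. Assume for contradiction that $G_0$ has such a cycle and let $C$ be a shortest one with vertex set $R$. First, $G_0[R]=C$: a chord of $C$ lying in $G_0$ would combine with the shorter arc of $C$ between its endpoints to produce a strictly shorter cycle of degree-three vertices, contradicting minimality. Consequently each $v\in R$ has its third neighbor in $V(G_0)\setminus R$, so $\partial R=R$ and every boundary vertex has exactly one neighbor outside $R$ --- this is the edge-structure half of the cocollapsibility definition.

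Next I would verify that $R$ is nontrivial, i.e.\ $|V(G_0)\setminus R|\ge 2$. Since $K_4$ is $4$-Ore with $p=1.8$ and hence satisfies Theorem~\ref{Ore4Best}, the counterexample $G_0$ is not $K_4$, and Brooks' theorem then gives a vertex of degree $\ge 4$, so $V(G_0)\setminus R\ne\emptyset$. If $|V(G_0)\setminus R|=1$ with unique external vertex $w$, then $w$ must be adjacent to every $v\in R$, forcing $G_0$ to be the wheel $W_{|R|}$; but even wheels are $3$-colorable, $W_3=K_4$, and for odd $|R|\ge 5$ a direct calculation gives $|E(W_{|R|})|=2|R|>1.7|R|+1.1$, which equals $1.6(|R|+1)+0.2((|R|-1)/2)-0.4$, so $W_{|R|}$ satisfies Theorem~\ref{Ore4Best} strictly. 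Each outcome contradicts that $G_0$ is a counterexample.

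Finally, to establish that $(C,V(C))$ is boundary $3$-colorable, let $f:V(C)\to\{1,2,3\}$ be non-constant and put $L(v)=\{1,2,3\}\setminus\{f(v)\}$; the resulting $2$-element lists are non-uniform, and by connectedness of $C$ there exist adjacent $v,w\in V(C)$ with $L(v)\ne L(w)$. Writing $L(v)=\{a,c\}$, $L(w)=\{b,c\}$ with $\{a,b,c\}=\{1,2,3\}$, set $\phi(v)=a$ and color the path from $v$ to $w$ around $C$ (avoiding the edge $vw$) greedily: at each step at least one color of $L(\cdot)$ remains available, so the process succeeds, and since $a\notin L(w)$ the endpoint satisfies $\phi(w)\ne a=\phi(v)$, making the edge $vw$ proper as well. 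Hence $\phi$ avoids $f$ pointwise and is a proper $3$-coloring of $C$. Combining these facts, $R$ is a nontrivial cocollapsible subset of $G_0$, contradicting Corollary~\ref{NoCoColl}. The ``Hence'' clause of the corollary follows immediately, since a connected graph with no cycle is a tree.

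The main subtlety I anticipate is the nontriviality step, which uses the numerical bound in Theorem~\ref{Ore4Best} to rule out the one-external-vertex (wheel) configuration; the chord-elimination is a routine minimality argument, and the boundary $3$-colorability of cycles is a short standard list-coloring manipulation once the non-uniformity of $L$ is extracted from the non-constancy of $f$.
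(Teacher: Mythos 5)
Your proof is correct and follows essentially the same route as the paper: the cycle is shown to be a cocollapsible set, Corollary~\ref{NoCoColl} forces it to be trivial, the resulting odd wheel is then eliminated by the potential/edge-count computation. The only difference is that you supply details the paper delegates to Gallai's theorem (inducedness via a shortest-cycle chord argument) or leaves implicit (the explicit list-coloring verification that a cycle with non-uniform $2$-lists is colorable), which is fine.
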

\begin{proof}
Suppose to a contradiction that there exists a cycle $C=v_1v_2\ldots v_k$ in $G_0$ such that $d(v_i)=3$ for all $i, 1\le i\le k$. By Gallai's theorem, $k$ is odd and $C$ is induced. Note then that $V(C)$ is a cocollapsible set. By Lemma~\ref{NoCoColl}, $|G\setminus V(C)|=1$. That is, there exists $u\not\in V(C)$ adjacent to all the vertices of $C$. Hence $G_0$ is an odd wheel, $|V(G_0)|=k+1$, $|E(G_0)|=2k$, $\alpha(D_3(G_0))=\frac{k-1}{2}$. So $p(G_0) = 4.8(k+1)-3(2k) + .6\frac{k-1}{2} = 4.5 - .9k$. As $G_0\ne K_4$, $k\ge 5$ and hence $p(G_0)\le 0$, a contradiction.
\end{proof}

\begin{corollary}\label{NoTriangle}
Every triangle of $G_0$ contains at most one vertex of degree three.
\end{corollary}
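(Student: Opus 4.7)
The plan is a proof by contradiction: suppose some triangle $T = uvw$ of $G_0$ contains at least two vertices of degree three, say $u$ and $v$. Since $u$ has degree three with two neighbors $v, w$ already in $T$, there is a unique third neighbor $u' \notin T$; define $v'$ analogously for $v$. I will split into two cases according to whether $u' = v'$ or not.

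First, if $u' = v'$, call this common neighbor $z$. Then the four vertices $\{u, v, w, z\}$ carry the edges $uv, uw, vw, uz, vz$, which is exactly a $K_4 - e$ (the missing edge being $wz$). This contradicts Corollary~\ref{NoDiamond}.

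Second, if $u' \neq v'$, the goal is to exhibit $(u', v')$ as an identifiable pair in the proper subset $R := V(G_0) \setminus \{u, v\}$, thereby contradicting Lemma~\ref{IdentifiablePair}. Note that $u', v' \in \partial R$ because they are neighbors in $R$ of $u, v \notin R$. The content is to show that every $3$-coloring of $G_0[R]$ assigns the same color to $u'$ and $v'$. Assume for contradiction there is a $3$-coloring $\phi$ of $G_0[R]$ with $\phi(u') \ne \phi(v')$; I claim $\phi$ extends to a proper $3$-coloring of all of $G_0$. Since the only neighbors of $u$ outside $\{v\}$ are $w, u'$, and similarly for $v$, extending amounts to picking $\phi(u), \phi(v)$ from the three colors so as to avoid small forbidden sets and to differ from each other. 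A short case split on whether $\phi(w)$ equals $\phi(u')$, equals $\phi(v')$, or is the third color confirms that a valid choice always exists precisely when $\phi(u') \ne \phi(v')$. Such an extension $3$-colors $G_0$, contradicting $4$-criticality; therefore every $3$-coloring of $G_0[R]$ satisfies $\phi(u') = \phi(v')$, making $(u', v')$ identifiable in $R$ and contradicting Lemma~\ref{IdentifiablePair}.

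The main, and essentially only, obstacle is the case analysis for the extension step in the second case: one must verify that no matter what color $w$ receives, the distinctness $\phi(u') \ne \phi(v')$ always leaves room to choose two distinct colors for $u$ and $v$ respecting all three constraints at each endpoint. This is routine and finite, so the overall argument is short, relying entirely on structural consequences of the no-diamond corollary and the no-identifiable-pair lemma already established.
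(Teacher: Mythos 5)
Your proof is correct and follows essentially the same route as the paper: split on whether the two outside neighbors of the degree-three vertices coincide, getting a $K_4-e$ (contradicting Corollary~\ref{NoDiamond}) in one case and an identifiable pair (contradicting Lemma~\ref{IdentifiablePair}) in the other. You merely spell out the extension argument certifying the identifiable pair, which the paper leaves implicit.
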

\begin{proof}
Suppose not. That is there exists a triangle $T=v_1v_2v_3$ such that $d(v_1)=d(v_2)=3$. By Corollary~\ref{NoCycle3}, we may assume that $d(v_3)\ge 4$. Let $N(v_1)\setminus T = \{u_1\}$ and $N(v_2)\setminus T =\{u_2\}$. If $u_1\ne u_2$, then $u_1,u_2$ is an identifiable pair of vertices contradicting Lemma~\ref{IdentifiablePair}. So we may assume that $u_1=u_2$. But then there exists a subgraph of $G_0$ isomorphic to $K_4-e$ contradicting Corollary~\ref{NoDiamond}.
\end{proof}

\begin{corollary}\label{54}
If $R$ is a proper subset of $V(G_0)$, then $p(R)>5.4$ unless $V(G)\setminus R=\{v\}$ where $v$ is a vertex of degree three.
\end{corollary}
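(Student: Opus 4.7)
The plan is to take a proper subset $R\subsetneq V(G_0)$ with $p(R)\le 5.4$ and, assuming the exception fails (i.e.\ $V(G_0)\setminus R\neq\{v\}$ for any degree-$3$ vertex $v$), derive a contradiction. By Corollary~\ref{NoColl}, $R$ is not collapsible, so by Proposition~\ref{Coll} $R$ admits a critical extension that is either non-total or has core of size at least two. A non-total extension gives $p(R)\ge p(G_0)+6>7.2$ via Lemma~\ref{R2}, a contradiction; a total extension with core of size two gives $p(R)\ge p(G_0)-p(W)+6.6\ge p(G_0)+4.8>6$ via Lemma~\ref{ExtForm}, again a contradiction. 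So every critical extension of $R$ is total and has core of size exactly three.

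Fix such an extension with extender $W$ and identified triangle $T$. Lemma~\ref{ExtForm} now reads $p(R)=p(G_0)+5.4-p(W)+0.6\alpha_0$, where $\alpha_0\ge 0$ measures the slack in the independence-number inequality. All potentials lie in $0.6\mathbb{Z}$, and since $G_0$ is a counterexample we have $p(G_0)>1.2$, hence $p(G_0)\ge 1.8$. Combined with $p(R)\le 5.4$ and the minimality bound $p(W)\le 1.8$, this forces $p(G_0)=1.8$, $p(W)=1.8$, and $\alpha_0=0$. Since $p(W)>1.2$, the minimality of $G_0$ then makes $W$ a $4$-Ore graph. If $|V(W)|=4$ then $W=K_4$ and $W-T$ is a single vertex $v$; completeness of the extension gives $d_{G_0}(v)=d_W(v)=3$, which is precisely the exception case, contradicting our assumption.

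The remaining case $|V(W)|\ge 7$ is where the pure potential computation stalls; this is the main obstacle. The key move is to apply Lemma~\ref{Cocoll2} to the $4$-Ore graph $W$ and its triangle $T$ (valid since $W\ne K_4$), obtaining a set $S\subseteq W-T$ that is either collapsible in $W$ or nontrivial cocollapsible in $W$. I then transfer $S$ to $G_0$ using the completeness of the extension: each edge from $S$ to $T$ in $W$ corresponds bijectively to an edge from $S$ to $R$ in $G_0$, so $G_0[S]=W[S]$, the boundaries of $S$ in $G_0$ and in $W$ coincide as vertex sets, and each boundary vertex of $S$ has the same number of neighbors outside $S$ in either graph. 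Consequently the $3$-colorings of $G_0[S]$ and $W[S]$ agree, so in the collapsible case $S$ is collapsible in $G_0$, contradicting Corollary~\ref{NoColl}; in the nontrivial cocollapsible case each boundary vertex of $S$ has exactly one neighbor in $V(G_0)\setminus S$ and $|V(G_0)\setminus S|\ge|R|+|W-T\setminus S|\ge 2$, so $S$ is a nontrivial cocollapsible subset of $G_0$, contradicting Corollary~\ref{NoCoColl}. Either way we reach a contradiction, completing the argument.
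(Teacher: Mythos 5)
Your proof is correct and follows essentially the same route as the paper's: rule out non-total extensions via Lemma~\ref{R2}, rule out total extensions with core size two by the potential count, conclude the extender $W$ is $4$-Ore with core a triangle $T$, treat $W=K_4$ as the exceptional case, and otherwise apply Lemma~\ref{Cocoll2} to $W\setminus T$ and transfer the resulting set to $G_0$ to contradict Corollary~\ref{NoColl} (or \ref{NoCoColl}). The only difference is that the paper first disposes of $|R|\le 3$ by direct inspection before invoking a critical extension (which is only defined for $|R|\ge 4$), a base case your argument implicitly skips; everything else, including the exact-equality bookkeeping you add, matches the paper's argument.
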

\begin{proof}
Suppose not. It is straightforward to check that the statement holds when $|R|\le 3$. So we may assume that $|R|\ge 4$. Let $R'$ be a critical extension of $R$ with extender $W$. As $G_0$ is minimum counterexample, $p(G_0)>1.2$. If $R'$ is not total, then $p(R)\ge p(G_0)+6 \ge 7.2$, a contradiction. Yet we may choose $R'$ with core size at least two since $R$ is not collapsible by Corollary~\ref{NoColl}. Thus $p(G_0)\le p(R)+p(W)-5.4 \le p(W)$. Hence $W$ is $4$-Ore and the extension has core size three. Suppose $W\ne K_4$. By Lemma~\ref{Cocoll2}, there exists a collapsible or cocollapsible subset $R_0$ of $G_0\setminus T$, where $T$ is the core of the extension. Note $T$ is a triangle since the extension has core size three. But $R'$ is a total extesnion of $R$. Thus either $R_0$ or $G_0\setminus R_0$ is collapsible in $G_0$, contradicting Lemma~\ref{NoColl}. So we may assume that $W=K_4$. That is $G_0\setminus R$ consists of one degree three vertex, a contradiction.
\end{proof}

\section{Characterizing Components of $D_3(G_0)$}

We now attempt to characterize the components of $D_3(G_0)$. In particular we show that they are small in size. A useful tool for this goal is the following reduction of Kostochka and Yancey~\cite{KY2}.

\subsection{Degree Three Reductions}

\begin{definition}
Let $v$ be a vertex of degree three in $G_0$ with neighbors $u_1,u_2,u_3$. If $u_1$ is not adjacent to $u_2$ then the graph obtained from $G_0$ by deleting $v$ and identifying $u_1$ and $u_2$ is not $3$-colorable and so contains a $4$-critical subgraph $K$. We say $K$ is a \emph{degree three reduction} of $v$ in the direction of $u_1$ and $u_2$ and denote it by $K(v; u_1,u_2)$. We also say that $K$ is a degree three reduction of $v$ away from $u_3$ and may denote it as $K(v;u_3)$. We say $R=V(K)-u_1u_2 + \{v,u_1,u_2\}$ is the \emph{expansion} of $K$.
\end{definition}


\begin{lemma}\label{DegreeThree}
Let $K$ be a degree three reduction of $G_0$ and let $R$ be the expansion of $K$. If $R'$ is a critical extension of $R$ with extender $W$, then $R'$ is total. In addition if $W$ does not have a core of size one, then either 
\begin{enumerate}
\item $K$ is $4$-Ore, or,
\item $\alpha(D_3(G)) = \alpha(D_3(K))+ \alpha(D_3(W))+2$, or
\item $W$ is $4$-Ore
\end{enumerate}
Furthermore, if $W$ has a core of size two, then (1), (2) and (3) all hold.
\end{lemma}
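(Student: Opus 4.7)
The plan is to bound $p(R)$ in terms of $p(K)$ using the explicit relationship between the expansion $R$ and the reduction $K$, and then feed this into Lemma~\ref{ExtForm} together with the inductive bounds $p(K), p(W) \le 1.8$ coming from the minimality of $G_0$.

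First I would derive the key estimate
\[
p(R) \le p(K) + 3.6 - 3\epsilon + 0.6\bigl(\alpha(D_3(G_0[R])) - \alpha(D_3(K))\bigr),
\]
where $\epsilon = 1$ if $u_3 \in V(K)$ and $\epsilon = 0$ otherwise. The vertex count gives $|V(R)| = |V(K)|+2$ (the identified vertex $w$ splits back into $u_1,u_2$ and $v$ is reinstated), and by Corollary~\ref{NoDiamond} the edge count gives $|E(G_0[R])| \ge |E(K)|+2+\epsilon$ (any shared neighbor of $u_1,u_2$ other than $v$ would, together with $v$, form a $K_4-e$). For the independence term, a maximum independent set in $D_3(G_0[R])$ contains either $v$ (forcing $u_1,u_2$ out) or at most both $u_1,u_2$ (forcing $v$ out), so dropping these vertices produces an independent set in $D_3(K)$ of size at least $\alpha(D_3(G_0[R]))-2$. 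In particular $\alpha(D_3(G_0[R])) \le \alpha(D_3(K))+2$ and thus $p(R) \le p(K)+4.8 \le 6.6$.

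To prove totality of $R'$ I would argue by contradiction: if $R'$ were not total, Lemma~\ref{R2} would give $p(R) \ge p(G_0)+6 > 7.2$, contradicting the bound $p(R) \le 6.6$ just derived. Hence $R' = V(G_0)$.

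For the remaining structural claims I would combine the $p(R)$ estimate with Lemma~\ref{ExtForm} applied to $R' = V(G_0)$ to obtain
\[
p(G_0) \le p(K) + p(W) + 3.6 - 3\epsilon - 4.8\,c_0 + 3\binom{c_0}{2} + 0.6\bigl(\alpha(D_3(G_0))-\alpha(D_3(K))-\alpha(D_3(W))\bigr),
\]
where $c_0 \in \{2,3\}$ is the core size. When (1) fails, $p(K)\le 1.2$; when (3) fails, $p(W)\le 1.2$; when (2) fails, the $\alpha$-difference drops from $2$ to at most $1$. For core size $3$ the coefficient constant equals $-1.8$, and plugging in the failure of all three conditions yields $p(G_0)\le 1.2$, the required contradiction, so at least one of (1), (2), (3) must hold. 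For core size exactly $2$ the constant equals $-3$, which is $1.2$ smaller---precisely the margin absorbed by any one of the three failures; hence even a single failure of (1), (2), or (3) forces $p(G_0)\le 1.2$, so all three must hold simultaneously.

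The main obstacle I anticipate is the independent-set bookkeeping: justifying $\alpha(D_3(G_0[R]))\le \alpha(D_3(K))+2$ cleanly in the sub-case where a maximum independent set of $D_3(G_0[R])$ contains both $u_1$ and $u_2$ (requiring the argument that $w$ has $d_K(w) \ge 4$ whenever $u_1,u_2$ have no shared neighbor, so $w$ cannot be added back to the $D_3$-independent set in $K$), and verifying that the failure of (2) loses exactly $0.6$ in the combined estimate, no more and no less.
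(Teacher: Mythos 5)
Your proposal is correct and follows essentially the same route as the paper: the same bound $p(R)\le p(K)+4.8$ (via $|V(R)|=|V(K)|+2$, $|E(G_0[R])|\ge |E(K)|+2$, and $\alpha(D_3(G_0[R]))\le \alpha(D_3(K))+2$), totality from the potential penalty for non-total extensions, and the same $-1.8$ versus $-3$ constants for core sizes three and two with $0.6$ lost for each failed condition, yielding $p(G_0)\le 1.2$. The only cosmetic differences are that you invoke Lemma~\ref{R2} directly where the paper re-derives its two cases, and your extra $-3\epsilon$ refinement is unnecessary (its $K_4-e$ justification is also slightly off, since a common neighbor of the non-adjacent $u_1,u_2$ together with $v$ need only form a $C_4$; but this is harmless, as the estimate is only ever needed with $\epsilon=0$ and extra edges can only lower $p(R)$).
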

\begin{proof}
Suppose that $K$ is a reduction of $v$ in the direction of $u$ and $w$. Since $|E(R)|=|E(K)|+2$ and $|V(R)|=|V(K)|+2$, we find that $p(R)= p(K) + 3.6 + .6(\alpha(D_3(R))-\alpha(D_3(K)))$. Yet note that $\alpha(D_3(R))\le \alpha(D_3(K))+2$ since $u$ and $w$ may be degree three in $R$. Thus $p(R) \le p(K)+4.8$. As $G_0$ is a minimum counterexample, $p(K),p(W)\le 1.8$. Moreover, $\alpha(D_3(R'))\le \alpha(D_3(R))+\alpha(D_3(W))$ which is at most $\alpha(D_3(K))+\alpha(D_3(W))+2$.

Suppose $W$ is not complete. By Lemma~\ref{ExtForm}, $p(R')\le p(R)+P(W) - 7.8 \le 1.8 + 4.8 + 1.8 - 7.8 = .6$, a contradiction. Suppose $W$ is not spanning. By Lemma~\ref{ExtForm}, $p(R')\le p(R)+P(W) - 4.8 \le 1.8 + 4.8 + 1.8 - 4.8 = 3.6$. But since $W$ is not spanning, $R'\ne V(G)$. By Lemma~\ref{R1}, $p(G_0)\le p(R')-3 = .6$, a contradiction. Thus we may assume $W$ is total. 

If $W$ has a core of size two, then by Lemma~\ref{ExtForm}, $p(G)\le p(R)+p(W)-6.6 \le 1.8 + 4.8 + 1.8 - 6.6 = 1.8$. If $K$ is not $4$-Ore, then $p(K)\le 1.2$ and hence $p(G_0)\le 1.2$, a contradiction. So (1) holds. Similarly if $W$ is not $4$-Ore, then $p(W)\le 1.2$ and hence $p(G_0)\le 1.2$, a contradiction. So (2) also holds. Finally if $\alpha(D_3(G_0)) \ne \alpha(D_3(K))+\alpha(D_3(K)) + 2$, then $\alpha(D_3(G))\le \alpha(D_3(R))+\alpha(D_3(W)) - 1$; in that case, $p(G_0)\le p(R)+p(W) - 7.2$ and hence $p(G)\le 1.2$, a contradiction. So (3) holds as well.

So we may assume that $W$ has a core of size three. By Lemma~\ref{ExtForm}, $p(G)\le p(R)+p(W)-5.4$. Now suppose that none of (1), (2), or (3) hold. Since (1) does not hold, $K$ is not $4$-Ore. By the minimality of $G_0$, $p(K)\le 1.2$. Since (3) does not hold, $W$ is not $4$-Ore. By the minimality of $G_0$, $p(W)\le 1.2$. Since (2) does not hold, $\alpha(D_3(G_0)) \ne \alpha(D_3(K))+\alpha(D_3(K)) + 2$. Thus $\alpha(D_3(G_0))\le \alpha(D_3(R))+\alpha(D_3(W)) - 1$. This last observation improves the bound from Lemma~\ref{ExtForm} to $p(G_0)\le p(R)+p(W)-6$. Since $p(R)\le p(K)+4.8$, we have that $p(G_0) \le p(K)+p(W) - 1.2$. But since $p(K),p(W)\le 1.2$, it follows that $p(G) \le 1.2+1.2 - 1.2 = 1.2$, a contradiction. 
\end{proof}

\begin{lemma}\label{DegreeThree2}
Let $K(v; u_1,u_2)$ be a degree three reduction of $G_0$ and let $R$ be the expansion of $K$. Then either 
\begin{enumerate}
\item $K$ is $4$-Ore and $K(u_1,u_2)$ is an uncollapsible split, or,
\item $d(u_1)=d(u_2)=3$ and there exists a maximum independent set of $D_3(K)$ that does not intersect $\bar{N(u_1)}\cup \bar{N(u_2)}$,
or,
\item $|V(G)\setminus R|=1$.
\end{enumerate}
\end{lemma}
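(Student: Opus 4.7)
The plan is to apply Lemma~\ref{DegreeThree} to critical extensions of $R$ and translate the resulting structural conclusions into the three alternatives of the present lemma. First, I would dispose of the case $R = V(G_0)$ by working directly with $K$. Here $K$ is itself $4$-critical on $|V(G_0)| - 2$ vertices, and the potential identity $p(R) = p(K) + 3.6 + 0.6(\alpha(D_3(R)) - \alpha(D_3(K)))$, combined with $p(G_0) > 1.2$ and the minimality of $G_0$ (giving $p(K) \le 1.8$, improved to $p(K) \le 1.2$ unless $K$ is $4$-Ore), forces either $K$ to be $4$-Ore or $\alpha(D_3(R)) - \alpha(D_3(K)) = 2$. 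The latter equality directly yields conclusion (2), since extending a maximum independent set of $D_3(K)$ by $\{u_1, u_2\}$ requires both $d(u_1) = d(u_2) = 3$ and that the independent set avoid $\bar{N(u_1)} \cup \bar{N(u_2)}$. In the former case, I would show $K(u_1,u_2) = G_0 \setminus v$ is an uncollapsible split using Corollary~\ref{NoColl} and Lemma~\ref{Uncoll1}, yielding conclusion (1).

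In the main case $R \subsetneq V(G_0)$, a critical extension $R'$ of $R$ with extender $W$ exists. By Lemma~\ref{DegreeThree}, $R'$ is total, so $R' = V(G_0)$ and $|V(G_0) \setminus R| = |V(W)| - |{\rm core}|$. If there exists an extension with $W = K_4$ of core size three, then the single added vertex realises conclusion (3). Otherwise, I would argue via Lemma~\ref{ExtForm} and the bound $p(G_0) > 1.2$ that there must exist an extension with core of size at least two whose extender $W$ is not isomorphic to $K_4$ with core three, and hence Lemma~\ref{DegreeThree} applies to yield structure.

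For such an extension, Lemma~\ref{DegreeThree} provides one of (a) $K$ is $4$-Ore, (b) $\alpha(D_3(G_0)) = \alpha(D_3(K)) + \alpha(D_3(W)) + 2$, or (c) $W$ is $4$-Ore. Situation (b) yields conclusion (2) via the same $\alpha$-counting as before: equality forces a maximum independent set of $D_3(G_0)$ to decompose as a maximum independent set of $D_3(K)$ together with $\{u_1, u_2\}$, which in turn forces $d(u_1) = d(u_2) = 3$ and the avoidance of $\bar{N(u_1)} \cup \bar{N(u_2)}$. For (c), I would use Lemma~\ref{Ind} applied to the core of $W$ (noting $p(W) = 1.8$ in the tight case) to strengthen the $\alpha$-bound and force either (b) to hold or $W = K_4$ with core three, recovering conclusion (3). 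Situation (a) is handled exactly as in the case $R = V(G_0)$ above, producing conclusion (1).

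The main obstacle I expect is the part of situation (a): arguing that if $K$ is $4$-Ore, then $K(u_1, u_2) = G_0 \setminus v$ is necessarily an uncollapsible split. A hypothetical collapsible subset $S \subseteq V(G_0) \setminus \{v\}$ need not be collapsible in $G_0$ because reinserting $v$ together with its three incident edges $vu_1, vu_2, vu_3$ may enlarge the boundary of $S$ and change the colour constraints. The subtle case is when $S$ contains some but not all of $\{u_1, u_2, u_3\}$; a careful analysis using Lemma~\ref{Uncoll1} on the Ore-decomposition of $K$ should show either $S$ extends to a collapsible subset of $G_0$, contradicting Corollary~\ref{NoColl}, or the original reduction away from $u_3$ must in fact place us in case (2) or (3) instead.
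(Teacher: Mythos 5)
Your overall skeleton matches the paper's: apply Lemma~\ref{DegreeThree} to a critical extension of $R$ (chosen, via Corollary~\ref{NoColl} and Proposition~\ref{Coll}, to have core size at least two), and translate its three outcomes into the three conclusions. Your treatment of outcome (2) is essentially the paper's, and the obstacle you flag in outcome (1) --- that a collapsible subset of $K(u_1,u_2)$ need not be collapsible in $G_0$ because reinserting $v$ changes boundaries --- is real; the paper itself only asserts the transfer and leaves an explicit marginal note that a further potential argument is needed there, so you are not behind the paper on that point, but you do not close it either.

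The genuine gap is in your handling of outcome (c), where $W$ is $4$-Ore with core of size three and $W\ne K_4$. Your plan is to apply Lemma~\ref{Ind} to a core vertex of $W$ to improve the independence-number bound and then finish by potential arithmetic. The numbers do not close: with $p(K),p(W)\le 1.8$, $p(R)\le p(K)+4.8$, and a core of size three, Lemma~\ref{ExtForm} gives only $p(G_0)\le p(K)+p(W)-0.6\le 3$, and even an extra $-0.6$ from Lemma~\ref{Ind} (which is itself not clearly applicable, since the core vertices of $W$ are the artificial triangle vertices and a maximum independent set of $D_3(W)$ meeting their closed neighbourhoods does not obviously translate into a drop in $\alpha(D_3(G_0))$) leaves you far above the target $1.2$. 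There is no way to ``force (b) or $W=K_4$'' by this route. The paper instead argues structurally: since $W\ne K_4$ is $4$-Ore, Lemma~\ref{Cocoll2} produces a collapsible or nontrivial cocollapsible subset $R_0$ of $W$ avoiding the core triangle $T$; totality of the extension ensures $R_0$ remains cocollapsible in $G_0$, so its complement is collapsible by Proposition~\ref{NontrivialCocoll}, contradicting Corollary~\ref{NoColl}. You need some such structural ingredient here; potential counting alone is insufficient. (Separately, your preliminary reduction of the case $R=V(G_0)$ also does not follow from the stated inequalities --- $p(G_0)=p(K)+3.6+\cdots$ is consistent with $p(G_0)>1.2$ for any $K$ --- though this edge case is one the paper silently ignores as well.)
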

\begin{proof}
Let $R'$ be a critical extension of $R$ with extender $W$. By Lemma~\ref{NoColl}, we choose $W$ such that it does not have a core of size one. Apply Lemma~\ref{DegreeThree} to $K$, $R$ and $W$. Thus the extension is total and one of \ref{DegreeThree}(1), (2) or (3) holds.

Suppose \ref{DegreeThree}(1) holds. That is, $K$ is $4$-Ore. Thus $K(u_1,u_2)$ is a split of a $4$-Ore graph. As $G_0$ does not contain a collapsible subset by Lemma~\ref{NoColl}, it follows that $K(u_1,u_2)$ is an uncollapsible split. So 1 holds as desired. (**Potential argument needed! Not necc collapsible in $G$!)

Suppose \ref{DegreeThree}(2) holds. Thus $u_1,u_2$ are in every maximum indpendent set of $D_3(G_0)$. So $d(u_1)=d(u_2)=3$ and yet $\alpha(D_3(R)) = \alpha(D_3(K))+2$. Thus the identified vertex is not in some maximum independent of $D_3(K)$ nor its neighbors which form the set $N(u_1)\cup N(u_2)$. Hence there exists a maximum independent set of $D_3(K)$ that does not intersect $N(u_1)\cup N(u_2)$. So 2 holds as desired.

So we may suppose that neither \ref{DegreeThree}(1) or (2) hold. Hence $W$ has a core of size three and \ref{DegreeThree}(3) holds. That is, $W$ is $4$-Ore. If $|V(G_0)\setminus R|=1$, then 3 holds as desired. So we may suppose that $|V(G_0)\setminus R|\ne 1$. Hence $W\ne K_4$. By Lemma~\ref{Cocoll2}, there exists a cocollapsible subset $R_0$ in $W\setminus T$. As the extension is total, every vertex in $R_0$ has at most one neighbor in $V(G_0)\setminus R_0$. Thus $R_0$ is a cocollapsible subset of $G_0$. As $|V(G_0)\setminus R_0|\ne 1$, $G\setminus R_0$ is a collapsible subset of $G_0$, contradicting Lemma~\ref{NoColl}.
\end{proof}

\subsection{Characterizing the Components of Degree Three Vertices}

Note by Corollary~\ref{NoCycle3} that every component of $D_3(G)$ is a tree. Our goal is to show that these components have bounded size. Indeed, we will go further and characterize the possible components exactly. For discharging purposes, we will also need that degree four neighbors of the larger components of degree three are in special structures. To that end, we will need the following notion:

\begin{definition}
If $v$ is a vertex of degree three and $u$ is a neighbor of $v$ with degree at least four, then we say that $u$ is \emph{good for $v$} if $u$ is contained in an uncollapsible split of a $4$-Ore graph not containing $v$.
\end{definition}

We now proceed by analyzing the vertices of degree three in components of $D_3(G)$ of size at least three. We analyze these vertices according to their degree in that component. First we analyze the degree ones (i.e. the leafs):

\begin{lemma}\label{Deg1}
Let $C\in \C(D_3(G_0))$ and $v\in V(C)$. If $d_C(v)=1$ and $|C|\ge 3$, then the neighbors of degree at least four of $v$ are either adjacent or good for $v$. 
\end{lemma}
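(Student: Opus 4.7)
My plan is to apply a degree-three reduction at $v$ and invoke Lemma~\ref{DegreeThree2}. Write $N(v)=\{u_1,u_2,w\}$, where $w$ is the unique degree-three neighbor of $v$ (this exists because $d_C(v)=1$, and $w$ must have a further degree-three neighbor in $C$ since $|C|\ge 3$) and $u_1,u_2$ are the two neighbors of degree at least four whose goodness I aim to establish. If $u_1u_2\in E(G_0)$ the lemma holds, so I assume $u_1$ and $u_2$ are non-adjacent, form the reduction $K=K(v;u_1,u_2)$ with expansion $R$, and apply Lemma~\ref{DegreeThree2}.

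Outcome~(2) of Lemma~\ref{DegreeThree2} is immediately excluded because $d(u_1),d(u_2)\ge 4$. In outcome~(1), $K$ is $4$-Ore and $K(u_1,u_2)$ is an uncollapsible split of $K$; since $v\notin V(K)$ (it was deleted during the reduction), both $u_1$ and $u_2$ lie in an uncollapsible split of a $4$-Ore graph not containing $v$, so each is good for $v$ and the conclusion follows.

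The main obstacle is outcome~(3), in which $|V(G_0)\setminus R|=1$ and $K$ need not itself be $4$-Ore. Here I would let $x$ denote the sole vertex of $V(G_0)\setminus R$ and use the analysis underlying Lemma~\ref{DegreeThree2} to pin down its local structure: the extender $W$ is $K_4$, so $x$ has degree exactly three in $G_0$ and its three neighbors in $R$ form a triangle of $G_\phi(R)$ coming from the color classes of some $3$-coloring $\phi$ of $G_0[R]$. To rule this case out, I would exploit the hypothesis $|C|\ge 3$ by tracing the second degree-three neighbor $y$ of $w$ and examining how $w$, $y$, and $x$ sit relative to $u_1,u_2$ and the core triangle; the goal is to extract either an identifiable pair (contradicting Lemma~\ref{IdentifiablePair}) or a cocollapsible subset (contradicting Corollary~\ref{NoCoColl}). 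I expect this to be the hardest step, since outcome~(3) lives exactly at the potential threshold permitted by Corollary~\ref{54}, so disposing of it will require combining the global degree constraints coming from $C$ with the local structure forced by $W=K_4$, possibly supplemented by a secondary reduction $K(v;u_i,w)$ for an $i\in\{1,2\}$ with $u_i\not\sim w$ that can be made to land back in outcome~(1).
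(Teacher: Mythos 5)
Your setup is the same as the paper's: reduce at $v$ via $K(v;u_1,u_2)$ and apply Lemma~\ref{DegreeThree2}, discarding outcome~(2) because $d(u_1),d(u_2)\ge 4$ and reading off goodness from outcome~(1). Up to that point you match the paper exactly. The problem is outcome~(3), which you leave as an unexecuted ``plan'' involving the extender $W=K_4$, identifiable pairs, cocollapsible sets, and a possible secondary reduction. That is a genuine gap: nothing in your proposal actually rules the case out, and the route you sketch is not the one that works.

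The paper disposes of outcome~(3) in one line, and the observation you are missing is exactly where the hypothesis $|C|\ge 3$ enters. Let $w$ be the unique degree-three neighbor of $v$. In the graph $(G_0-v)/(u_1{=}u_2)$ the vertex $w$ has degree at most $2$, so it cannot lie in the $4$-critical subgraph $K$, hence $w\notin R$; and this exclusion propagates through the tree $C-v$ (each further vertex of $C$ has all three of its $G_0$-neighbors of degree three or equal to $v$ along the tree, so once its parent is excluded from $K$ it has degree at most $2$ available and is excluded too). Thus $(C\setminus\{v\})\cap R=\emptyset$, and since $|C|\ge 3$ this gives $|V(G_0)\setminus R|\ge |C|-1\ge 2$, directly contradicting $|V(G_0)\setminus R|=1$. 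No analysis of the extender, no search for an identifiable pair, and no secondary reduction are needed; the case simply cannot occur. As written, your proof is incomplete on this case and would need to be replaced by (or completed with) this counting argument.
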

\begin{proof}
Suppose not. Let $u_1,u_2$ be the neighbors of $v$ with degree at least four. We may suppose that $u_1$ and $u_2$ are not adjacent. Hence we may apply Lemma~\ref{DegreeThree2} to the degree three reduction $K(v;u_1, u_2)$. If \ref{DegreeThree2}(1) holds, then $u_1$ and $u_2$ are good for $v$, a contradiction. Yet \ref{DegreeThree2}(2) does not hold as $d(u_1),d(u_2)\ge 4$. Thus we may suppose that \ref{DegreeThree2}(3) holds. However, $(C\setminus v) \cap R=\emptyset$ and hence $|V(G_0)\setminus R|\ge 2$ as $|C|\ge 3$, a contradiction.
\end{proof}

Next we will analyze the degree twos. But first let us define a notion of smallness that will be useful:

\begin{definition}
Let $C$ be a component of $D_3(G_0)$. If $uv\in E(C)$, then we let $C(v;u)$ denote the component of $C\setminus \{u\}$ containing $v$. We say $C(v;u)$ is \emph{small} if it has at most five vertices and all vertices in it have distance at most two from $v$.
\end{definition}

\begin{lemma}\label{Deg2First}
Let $C\in \C(D_3(G_0))$ and $v\in V(C)$. If $d_C(v)=2$, $|C|\ge 3$ and $u$ is a degree three neighbor of $v$, then either
\begin{enumerate}
\item $u$ is degree one in $C$, or,
\item $C(v;u)$ has size at most 4 and is small and the neighhbor of degree at least four of $v$ is good for $v$.
\end{enumerate}
\end{lemma}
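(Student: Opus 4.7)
Let $u_2$ denote the other degree-three neighbor of $v$ in $C$ and let $w$ be the unique neighbor of $v$ with $d(w)\ge 4$. I assume the first alternative fails, so $u$ is not a leaf of $C$, and I aim to establish the second alternative. Since $v$, $u$, and $u_2$ are all of degree three and $v$ is adjacent to both $u$ and $u_2$, Corollary~\ref{NoTriangle} forces $u\not\sim u_2$, $u\not\sim w$, and $u_2\not\sim w$, so both reductions $K(v;u,w)$ and $K(v;u_2,w)$ are well defined.

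To obtain that $w$ is good for $v$, I would apply Lemma~\ref{DegreeThree2} to $K_a := K(v;u_2,w)$. Conclusion~(2) is impossible as $d(w)\ge 4$. To rule out conclusion~(3), I would observe that in the identified graph $G_0-v+u_2w$ the vertex $u$ has only its two non-$v$ $G_0$-neighbors as neighbors (neither being $u_2$ nor $w$ by the non-adjacencies above), so $u$ has degree two in the identified graph; hence $u\notin V(K_a)$ and $u\in V(G_0)\setminus R_a$. Since $u$ is not a leaf, there is a degree-three neighbor $u'\ne v$ of $u$ in $C$, and using Corollary~\ref{NoTriangle} together with $d(w)\ge 4$ one sees $u'\notin\{v,u_2,w\}$, so the edge $uu'$ survives in the identified graph; then the minimum-degree-three condition on $K_a$ forces $u'\notin V(K_a)$ as well. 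This gives $|V(G_0)\setminus R_a|\ge 2$, contradicting conclusion~(3). Therefore conclusion~(1) holds: $K_a$ is $4$-Ore, $K_a(u_2,w)$ is an uncollapsible split of $K_a$, and since $v\notin V(K_a)$ and $w$ lies in this split, $w$ is good for $v$.

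To bound $C(v;u)$ it suffices to show that every degree-three $G_0$-neighbor of $u_2$ other than $v$ is a leaf of $C$. Let $p,q$ be the two non-$v$ $G_0$-neighbors of $u_2$. Lemma~\ref{Uncoll2} applied to the uncollapsible split $K_a(u_2,w)$ yields $d_{K_a(u_2,w)}(u_2)\ge 2$, and since its only candidates for neighbors there are $p$ and $q$, both lie in $V(K_a)$. Now suppose $p\in C$, i.e.\ $d_{G_0}(p)=3$. A direct degree count in the identified graph shows $p\not\sim u$ and $p\not\sim w$ in $G_0$, else $p$ would have too few neighbors in $V(K_a)$ to meet the minimum-degree-three bound. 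Consequently $d_{K_a(u_2,w)}(p)=3$ and the two neighbors of $p$ in $K_a(u_2,w)$ other than $u_2$ are precisely its two non-$u_2$ neighbors $c,d$ in $G_0$. Then Lemma~\ref{Uncoll3} applied to $p$ as a degree-three neighbor of the split vertex $u_2$ gives: either $p$ lies in a triangle of $K_a$ disjoint from the merged vertex, which lifts to a triangle $\{p,c,d\}$ in $G_0$ and forces $d_{G_0}(c),d_{G_0}(d)\ge 4$ by Corollary~\ref{NoTriangle}; or $c,d$ already have degree at least four in $K_a$, hence in $G_0$. Either way, $p$'s non-$u_2$ neighbors have degree at least four in $G_0$, so $p$ is a leaf of $C$. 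The same argument applied to $q$ completes the proof.

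The main obstacle I expect is the final application of Lemma~\ref{Uncoll3}: one must verify carefully that the degrees of the relevant vertices inside $K_a$ and $K_a(u_2,w)$ agree with their $G_0$-degrees, and that the triangle extracted in $K_a$ minus the merged vertex translates faithfully into a triangle of $G_0$ on the same vertex set so that Corollary~\ref{NoTriangle} applies.
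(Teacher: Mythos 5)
Your proof is correct and follows essentially the same route as the paper: the same degree-three reduction $K(v;u_2,w)$ toward the other degree-three neighbor and the degree-four neighbor, the same case analysis via Lemma~\ref{DegreeThree2}, and the same use of Lemma~\ref{Uncoll3} (together with Corollary~\ref{NoTriangle}) to show the subtree beyond $u_2$ consists only of leaves. You have merely filled in details the paper leaves implicit, such as why conclusion (3) of Lemma~\ref{DegreeThree2} would force $u$ to be a leaf, and the degree bookkeeping needed to apply Lemmas~\ref{Uncoll2} and~\ref{Uncoll3}.
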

\begin{proof}
Let $w$ be the neighbor of degree at least four of $v$ and $u'$ be the other neighbor of degree three of $v$. By Lemma~\ref{NoTriangle}, $u'$ and $w$ are not adjacent. Hence we may apply Lemma~\ref{DegreeThree2} to the degree three reduction $K_1(v;u', w)$. \ref{DegreeThree2}(2) does not hold as $d(w)\ge 4$. If \ref{DegreeThree2}(3) holds, then $u$ has degree one in $C$ and 1 holds as desired. So we may suppose that \ref{DegreeThree2}(1) holds. Hence $w$ is good for $v$ and by Lemma~\ref{Uncoll3}, there are at most two vertices of degree three from $u'$ away from $v$ and 2 holds as desired.
\end{proof}

\begin{lemma}\label{Deg2}
Let $C\in \C(D_3(G_0))$ and $v\in V(C)$. If $d_C(v)=2$, $|C|\ge 3$ and $w$ is the neighbor of degree at least four of $v$, then either
\begin{enumerate}
\item $|C|\le 3$, or,
\item $|C|\le 7$ and $w$ is good for $v$, or,
\item $v$ has neighbor of degree one in $C$ and $w$ is good for $v$.
\end{enumerate}
\end{lemma}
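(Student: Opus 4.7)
The plan is a case analysis on how many of $v$'s two degree-three $C$-neighbors are leaves of $C$. Let these neighbors be $u_1,u_2$; each has degree three in $G_0$ since it lies in $D_3(G_0)$. By Corollary~\ref{NoCycle3}, $C$ is a tree, so $C\setminus v$ splits into two subtrees $T_1,T_2$ with $u_i\in T_i$. Note that $u_1u_2\notin E(C)$, since otherwise $vu_1u_2$ would be a triangle in the tree $C$. Consequently, removing $u_i$ from $C$ leaves $v$ in the same component as $T_{3-i}$, so $C(v;u_i)=\{v\}\cup T_{3-i}$ and $|C(v;u_i)|=1+|T_{3-i}|$.

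If both $u_1$ and $u_2$ are leaves of $C$, then $T_1=\{u_1\}$ and $T_2=\{u_2\}$, so $C=\{v,u_1,u_2\}$ and $|C|=3$, giving conclusion (1).

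If exactly one of them (say $u_1$) is a leaf, then applying Lemma~\ref{Deg2First} to $v$ with $u=u_2$ forces its conclusion (2), since $u_2$ is a degree-three neighbor of $v$ that is not a leaf in $C$. In particular, $w$ is good for $v$, and combined with the fact that $u_1$ is a degree-one-in-$C$ neighbor of $v$, this gives conclusion (3).

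If neither $u_1$ nor $u_2$ is a leaf, then Lemma~\ref{Deg2First} applied to $v$ with $u=u_1$ and then with $u=u_2$ returns its conclusion (2) in each case: $w$ is good for $v$, and $|C(v;u_i)|\le 4$ for $i=1,2$. Translating via $|C(v;u_i)|=1+|T_{3-i}|$, we obtain $|T_1|,|T_2|\le 3$, hence $|C|=1+|T_1|+|T_2|\le 7$, which is conclusion (2).

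There is no substantive obstacle; the argument is essentially bookkeeping, with the only subtle point being to correctly identify $C(v;u_i)$ with $\{v\}\cup T_{3-i}$ so that the size bound provided by Lemma~\ref{Deg2First} can be assembled into a bound on $|C|$.
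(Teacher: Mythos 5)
Your proposal is correct and follows essentially the same route as the paper: apply Lemma~\ref{Deg2First} to each of the two degree-three neighbors $u_1,u_2$ of $v$ and split into cases according to whether conclusion (1) or (2) of that lemma holds for each, with the $|C|\le 7$ bound obtained by summing the two bounds $|C(v;u_i)|\le 4$. Your explicit identification $C(v;u_i)=\{v\}\cup T_{3-i}$ is just a slightly more careful rendering of the bookkeeping the paper leaves implicit.
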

\begin{proof}
Let $w$ be the neighbor of degree at least four of $v$ and $u_1,u_2$ be the neighbors of degree three of $v$. Apply Lemma~\ref{Deg2First} separately to $u_1$ and $u_2$. If (1) holds for both $u_1$ and $u_2$, then $|C|=3$ since $u_1$ and $u_2$ are both degree one in $C$. Thus 1 holds as desired. If (1) holds for one and (2) for the other, then 3 holds since $w$ is good for $v$ by (2) and $v$ has degree one neighbor in $C$ by (1). Finally if (2) holds for both $u_1$ and $u_2$, then $w$ is good for $v$ and $|C(v;u_1)|,|C(v;u_2)|\le 4$. Thus $|C|\le 7$ and 2 holds as desired.
\end{proof}

\begin{lemma}\label{Deg3First}
Let $C\in \C(D_3(G_0))$ and $v\in V(C)$. Suppose $d_C(v)=3$ and $N(v)=\{u_1,u_2,u_3\}$. Then either 

\begin{enumerate}
\item $C(v;u_3)$ is small, or,
\item for all $i\in \{1,2\}$,  $\alpha(C(u_i;v)) = \alpha(C(u_i;v)-u_i)+1$, or
\item $u_3$ is degree one.
\end{enumerate}
\end{lemma}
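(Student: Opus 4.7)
The plan is to apply the degree-three reduction $K = K(v; u_1, u_2)$, which deletes $v$ and identifies the two non-adjacent degree-three vertices $u_1, u_2$ into a vertex $u^*$; non-adjacency follows from Lemma~\ref{NoTriangle}, since otherwise $v, u_1, u_2$ would be a triangle of three degree-three vertices. I then invoke Lemma~\ref{DegreeThree2}, whose three conclusions correspond, respectively, to the three alternatives of the statement.

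In Case~(1) of Lemma~\ref{DegreeThree2}, $K$ is $4$-Ore and $K(u_1, u_2)$ is an uncollapsible split of $u^*$. By Lemma~\ref{Uncoll3}, every degree-three neighbor of $u_1$ or $u_2$ in $K(u_1, u_2)$ either lies in a triangle in $K - u^*$ or has both of its non-$\{u_1,u_2\}$ neighbors of degree at least four. Combined with Lemma~\ref{NoTriangle} (forcing any such triangle to contain only one degree-three vertex) and Corollary~\ref{NoDiamond} (ruling out $u_1, u_2$ sharing a non-$v$ common neighbor), each degree-three non-$v$ neighbor of $u_1$ or $u_2$ must be a leaf of $C$ attached solely to $u_1$ or $u_2$. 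Using in addition the Ore-decomposition structure supplied by Lemma~\ref{Uncoll3}, I expect the sharpened bound $|C(v; u_3)| \le 5$ with all vertices at distance at most two from $v$ to follow, giving alternative~(1).

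In Case~(2), there exists a maximum independent set $I$ of $D_3(K)$ avoiding $\overline{N(u_1)} \cup \overline{N(u_2)}$; in $K$ this means $I$ avoids $u^*$ and the non-$v$ neighbors of $u_1, u_2$. Since identifications are disjoint (Corollary~\ref{NoDiamond}), non-$u^*$ vertices retain their $G_0$-degrees in $K$, so $I \cup \{u_1, u_2\}$ is independent in $D_3(G_0)$. Restricting $I$ to each component of $C \setminus v$ and adding $u_i$ back, I would show the resulting sets realize $\alpha(C(u_i; v)) = \alpha(C(u_i; v) - u_i) + 1$ for $i = 1, 2$, which is alternative~(2); the point is that maximality of $I$ combined with its avoidance of $N(u_i)$ forces every maximum independent set on the $C(u_i; v)$ side to contain $u_i$.

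In Case~(3), $|V(G_0) \setminus R| = 1$, where $R$ is the expansion; unwinding the proof of Lemma~\ref{DegreeThree2}, the extender $W$ is $K_4$ with core of size three, so the unique vertex $z$ outside $R$ has degree three in $G_0$ with its three neighbors in $R$ distributed across the three $\phi$-color classes. If $z = u_3$, I would use a careful choice of $\phi$ together with the absence of collapsible subsets (Corollary~\ref{NoColl}) to force the two non-$v$ neighbors of $u_3$ to have degree at least four, yielding alternative~(3). If $z \ne u_3$, then $u_3 \in V(K)$ and the near-saturation of $R$ with $G_0$ confines $C(v; u_3)$ into a small region, falling into alternative~(1). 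The main obstacle lies in the sharpening required in Cases~(1) and~(3): Lemma~\ref{DegreeThree2}'s conclusions are not tight enough on their own, and pinning down $|C(v;u_3)| \le 5$ or extracting alternative~(3) will require exploiting the global Ore-decomposition structure together with minimality of $G_0$.
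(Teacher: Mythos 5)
Your proposal follows the paper's proof exactly: both apply the reduction $K(v;u_1,u_2)$ (non-adjacency of $u_1,u_2$ via Lemma~\ref{NoTriangle}) and map the three outcomes of Lemma~\ref{DegreeThree2} onto the three alternatives, with outcome (1) giving smallness of $C(v;u_3)$ via Lemma~\ref{Uncoll3}, outcome (2) giving the independence-number identity, and outcome (3) giving that $u_3$ is a leaf. The details you flag as obstacles in Cases (1) and (3) are precisely the steps the paper treats as immediate (for (3), note $u_3\notin V(K)$ since it would have degree two there, so $u_3$ is the unique vertex outside $R$ and any degree-three neighbor of $u_3$ other than $v$ would contradict either Corollary~\ref{NoTriangle} or the minimum degree of $K$), so your argument is essentially the paper's.
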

\begin{proof}
By Lemma~\ref{NoTriangle}, $u_1$ and $u_2$ are not adjacent. Hence we may apply Lemma~\ref{DegreeThree2} to the degree three reduction $K_1(v;u_1, u_2)$. If \ref{DegreeThree2}(3) holds, then $u_3$ has degree one in $C$ and 3 holds as desired. If \ref{DegreeThree2}(1) holds, then $C(v;u_3)$ is small and (1) holds as desired. So we may suppose \ref{DegreeThree2}(2) holds, and hence there exists a maximum independent set of $C(v;u_3) \setminus\{u_1,u_2,v\}$ not intersecting $N(u_1)\cup N(u_2)$. But then for all $i\in\{1,2\}$ there is a maximum independent set $I_i$ of $C(u_i;v)$ not intersecting $N(u_i)$, which means that $\alpha(C(u_i;v)) = \alpha(C(u_i;v)-u_i)+1$ since $u_i+I_i$ is also an independent set. Thus 2 holds as desired.
\end{proof}

\begin{lemma}\label{Deg3}
Let $C\in \C(D_3(G_0))$ and $v\in V(C)$. Suppose $d_C(v)=3$ and $N(v)=\{u_1,u_2,u_3\}$. Then there exists $i\in\{1,2,3\}$ such that $C(v;u_i)$ is small.
\end{lemma}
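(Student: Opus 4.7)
The strategy is to apply Lemma~\ref{Deg3First} three times, letting each of $u_1,u_2,u_3$ play the role of the distinguished vertex ``$u_3$'' in that lemma's statement. Each application yields one of three mutually exclusive conclusions. If conclusion (1) holds for some application, then $C(v;u_i)$ is small for the corresponding $u_i$ and the conclusion follows immediately.

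For the remaining outcomes, I would first handle the subcases in which conclusion (3) of Lemma~\ref{Deg3First} holds for multiple applications. If conclusion (3) holds for all three applications, then each $u_i$ has degree one in $C$, so $V(C) = \{v,u_1,u_2,u_3\}$ and each $C(v;u_i) = \{v\} \cup (\{u_1,u_2,u_3\}\setminus\{u_i\})$ is a three-vertex set with all vertices at distance one from $v$, hence small. If conclusion (3) holds for exactly two applications—say, for $u_1$ and $u_2$—then $u_1$ and $u_2$ are leaves of the tree $C$ (using Corollary~\ref{NoCycle3}), so the component $C(v;u_3)$ of $C\setminus\{u_3\}$ containing $v$ is exactly $\{v,u_1,u_2\}$, which is small.

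In the remaining case, conclusion (2) of Lemma~\ref{Deg3First} holds for at least two of the three applications, which together yield $\alpha(C(u_j;v)) = \alpha(C(u_j;v)-u_j)+1$ for every $j\in\{1,2,3\}$; equivalently, each $u_j$ lies in some maximum independent set of its own branch $C(u_j;v)$. Choosing such an $I_j$ for each $j$, and noting that $u_1,u_2,u_3$ are pairwise non-adjacent by Corollary~\ref{NoTriangle} and that the branches are pairwise vertex-disjoint subtrees by Corollary~\ref{NoCycle3}, the union $I = I_1 \cup I_2 \cup I_3$ is an independent set in $D_3(G_0)$ of size $\alpha(C-v)$. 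Extending $I$ to a maximum independent set of $D_3(G_0)$ by adjoining vertices outside $V(C)$ (possible since $C$ is a component of $D_3(G_0)$), one obtains a maximum independent set of $D_3(G_0)$ containing all three of $u_1,u_2,u_3$.

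The main obstacle is deriving the contradiction in this final case, and I expect it to be the most technical step. The plan is to exploit the stronger form of conclusion (2) of Lemma~\ref{DegreeThree2}—namely, that each reduction $K_i = K(v;u_a,u_b)$ admits a maximum independent set of $D_3(K_i)$ disjoint from $N_{G_0}(u_a) \cup N_{G_0}(u_b)$—in conjunction with the three-fold symmetry among the branches. The idea is to use these avoidance properties to locate either a collapsible subset (contradicting Corollary~\ref{NoColl}) or a nontrivial cocollapsible subset (contradicting Corollary~\ref{NoCoColl}), typically by performing a further degree-three reduction at one of the $u_j$'s using its two neighbors in $G_0\setminus\{v\}$—which are non-adjacent by Corollary~\ref{NoDiamond}—and combining the resulting structure with a potential argument of the type used in Lemma~\ref{R3} to violate $p(G_0) > 1.2$.
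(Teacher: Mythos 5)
Your handling of the easy outcomes is fine: if Lemma~\ref{Deg3First}(1) ever fires you are done, and if (3) fires for two or three of the applications then two of the $u_i$ are leaves and the third branch-complement is trivially small. (The paper packages this contrapositively: if no $C(v;u_i)$ is small, then at least two of the branches $C(u_i;v)$ have at least three vertices, so (3) can fire at most once and (1) never; this also hands you the extra information that, say, $C(u_1;v)$ is large, which you will need below.) But the final case --- where conclusion (2) holds and you obtain $\alpha(C(u_j;v))=\alpha(C(u_j;v)-u_j)+1$ for every $j$ --- is the entire content of the lemma, and you have not proved it. You only describe a plan (``perform a further degree-three reduction \dots combine with a potential argument of the type used in Lemma~\ref{R3}''), and that plan points in the wrong direction: no new potential or collapsibility argument is needed at this stage, and building a maximum independent set of $D_3(G_0)$ containing all of $u_1,u_2,u_3$ does not by itself contradict anything.

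What actually closes the case is a second, purely combinatorial application of Lemmas~\ref{Deg2First} and~\ref{Deg3First} at the vertex $u_1$ itself (chosen with $|C(u_1;v)|\ge 3$), played off against the information already obtained at $v$. The condition $\alpha(C(u_1;v))=\alpha(C(u_1;v)-u_1)+1$ is equivalent to the existence of a maximum independent set of $C(u_1;v)-u_1$ avoiding $N(u_1)$; restricted to a sub-branch $C(w_1;u_1)$ this gives a maximum independent set of $C(w_1;u_1)$ \emph{not} containing $w_1$. If $d_C(u_1)=2$, Lemma~\ref{Deg2First} applied at $u_1$ forces $C(u_1;w)$ to be small, which forces $C(v;u_1)$ to be small, a contradiction. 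If $d_C(u_1)=3$, Lemma~\ref{Deg3First} applied at $u_1$ yields either that $C(v;u_1)$ is small (contradiction), or that every maximum independent set of $C(w_1;u_1)$ \emph{does} contain $w_1$, or that $w_1$ is a leaf (in which case the avoiding set could be enlarged by $w_1$); each of the latter two contradicts the avoidance property above. Without this interplay between the two nested applications, your argument has no mechanism for producing a contradiction, so the proof as proposed is incomplete.
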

\begin{proof}
Suppose not. Then for at least two $i\in\{1,2,3\}$, $|C(u_i;v)|\ge 3$. Suppose without loss of generality that $|C(u_1;v)|, |C(u_3;v)|\ge 3$. Thus $u_1$ and $u_3$ do not have degree one in $C$. Apply Lemma~\ref{Deg3First}. If (1) holds, then $C(v;u_3)$ is small, a contradiction. (3) does not hold since $u_3$ does not have degree one in $C$. Thus 2 holds. Hence $\alpha(C(u_1;v))=\alpha(C(u_1;v)-u_1)+1$.

Now suppose $u_1$ has degree two. Let $w$ be the neighbor of $u_1$ in $C$ distinct from $v$. As $|C(u_1;v)|\ge 3$, $w$ is not degree one in $C$. Hence Lemma~\ref{Deg2First}(2) holds. That is, $C(u_1;w)$ has size at most 4 and is small. But then $C(v;u_1)$ is also small, a contradiction.

So we may assume that $u_1$ has degree three in $C$. Let $w_1,w_2$ be the neighbors of $u_1$ in $C$ distinct from $v$. Apply Lemma~\ref{Deg3First} where $u_1$ plays the role of $v$ in that lemma and $w_2$ plays the role of $u_3$. Suppose (1) holds. That is, $C(u_1;w)$ is small. But then $C(v;u_1)$ is also small, a contradiction. Suppose (2) holds. But then $\alpha(C(w_1;u_1))=\alpha(C(w_1;u_1)-w_1)+1$. This is a contradiction since there exists a maximum indepedent set $I$ in $C(u_1;v)-u_1$ not intersecting $N(u_1)$ which would imply that $\alpha(C(w_1;u_1))=\alpha(C(w_1;u_1)-w_1)$. 

Finally suppose $3$ holds. That is $w_1$ has degree one. But then there does not exist a maximum independent set $I$ in $C(u_1;v)-u_1$ not intersecting $N(u_1)$ since then $I+w_1$ is a larger independent set in $C(u_1;v)-u_1$, a contradiction.
\end{proof}

\begin{lemma}\label{Components}
If $C$ is a component of $D_3(G)$, then $|C|\le 10$. Furthermore, if $|C|\ge 4$, then every neighbor of degree at least four of a vertex $v$ in $C$ is good for $v$ or $d_C(v)=1$ and the two neighbors of degree at least four of $v$ are adjacent.
\end{lemma}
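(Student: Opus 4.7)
The plan is to prove the two assertions of Lemma \ref{Components} separately, in each case by analyzing $d_C(v)$ for $v \in C$. By Corollary \ref{NoCycle3} the component $C$ is a tree, and since $G_0$ has maximum degree at most three, $d_C(v) \in \{1,2,3\}$ for every $v \in C$.

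For the size bound, I split on whether $C$ contains a branch point. If every vertex has $d_C(v) \leq 2$ then $C$ is a path; applying Lemma \ref{Deg2} at each internal vertex under $|C| \geq 4$ rules out option (1), and either yields $|C| \leq 7$ (option (2)) or forces every internal vertex to be adjacent to a leaf (option (3)), which in a path forces $|C| \leq 4$.

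Otherwise fix $v \in C$ with $d_C(v) = 3$ and neighbors $u_1, u_2, u_3$, writing $B_i = C(u_i; v)$. By Lemma \ref{Deg3}, after relabeling $C(v; u_3)$ is small, so $|B_1| + |B_2| \leq 4$ with $B_1, B_2$ each of depth at most one past $u_1, u_2$. It remains to bound $|B_3|$. If $d_C(u_3) \leq 2$, Lemma \ref{Deg2First} at $u_3$ (with $v$ playing the role of the degree-three neighbor) gives $|B_3| \leq 4$, hence $|C| \leq 9$. If $d_C(u_3) = 3$ with further neighbors $w_1, w_2$ and subtrees $D_1, D_2$, apply Lemma \ref{Deg3} at $u_3$: if the small direction is $v$ then $|D_1| + |D_2| \leq 4$ and $|B_3| \leq 5$, so $|C| \leq 10$. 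The hardest subcase is when the small direction at $u_3$ is, say, $w_1$: the size and depth constraints on $C(u_3; w_1)$ then force $|B_1| = |B_2| = 1$ and $|D_2| = 1$, reducing the problem to bounding $|D_1|$. If $d_C(w_1) \leq 2$, Lemma \ref{Deg2First} at $w_1$ gives $|D_1| \leq 4$; if $d_C(w_1) = 3$ with further neighbors $x_1, x_2$, applying Lemma \ref{Deg3} once more at $w_1$ shows that the small direction at $w_1$ cannot be $x_1$ or $x_2$, since the component on the $u_3$-side of $w_1$ contains $u_1, u_2$ at distance three from $w_1$, violating the depth-at-most-two condition. Hence the small direction at $w_1$ must be $u_3$, yielding $|D_1| \leq 5$ directly. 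In every subcase $|B_3| \leq 1 + 5 + 1 = 7$ and $|C| \leq 10$. The main obstacle is controlling this potentially long chain of consecutive branch points; the crucial observation is that the depth requirement in the definition of small forces the chain to terminate after at most two branch points past $v$.

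For the second assertion, I split again on $d_C(v)$. If $d_C(v) = 3$ the statement is vacuous because $v$ has no degree-$\geq 4$ neighbor. If $d_C(v) = 2$, Lemma \ref{Deg2} applies: $|C| \geq 4$ rules out option (1), and both remaining options conclude that the degree-$\geq 4$ neighbor of $v$ is good for $v$. If $d_C(v) = 1$, Lemma \ref{Deg1} directly asserts that the two degree-$\geq 4$ neighbors of $v$ are either adjacent (the stated exception) or good for $v$.
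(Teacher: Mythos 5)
Your proof is correct and uses essentially the same approach as the paper: the ``Furthermore'' part follows directly from Lemmas \ref{Deg1} and \ref{Deg2} exactly as in the paper, and the size bound rests on the same mechanism, namely that the depth condition in the definition of ``small'' (via Lemmas \ref{Deg2First} and \ref{Deg3}) cuts off chains of branch points and yields $|C|\le 5+5=10$. The only difference is organizational --- the paper's primary case split is on whether some degree-three vertex of $C$ has two non-leaf neighbors in $C$, while you anchor at an arbitrary branch vertex and walk outward through at most two further branch points --- but the underlying argument is the same.
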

\begin{proof}
We may suppose that $|C|\ge 4$. Let $v\in C$ and $w$ be a neighbor of degree at least four of $v$. Hence $d_C(v)=1$ or $2$. If $d_C(v)=1$, then $w$ is good for $v$ or is adjacent to other neighbor of degree four of $v$ by Lemma~\ref{Deg1}. If $d_C(v)=2$, then $w$ is good for $v$ by Lemma~\ref{Deg2} since $|C|\ge 4$. So it remains to show that $|C|\le 10$.

Suppose that there exists a vertex $v$ of degree three in $C$ with at least two neighbors of degree two in $C$. By Lemma~\ref{Deg3}, there exists a neighbor $u$ of $v$ such that $C(v;u)$ is small and hence $|C(v;u)|\le 5$. If $u$ has degree one in $C$, then $|C|=6$ as desired. Suppose $u$ has degree two in $C$. Apply Lemma~\ref{Deg2} to $u$. (1) does not hold since $|C|\ge 4$. If (3) holds, then $u$ has a neighbor $w$ of degree one in $C$. Since $v$ has degree three in $C$, $v\ne w$ and so $|C|\le 7$ as desired. So we may suppose that (2) holds. But then $|C|\le 7$ as desired.

So we may suppose that $u$ has degree three in $C$. By Lemma~\ref{Deg3}, there exists a neighbor $w$ of $u$ such that $C(u;w)$ is small. Yet $C(u;w)$ is not small for all $w\ne v$ since $v$ has a neighbor $x$ of degree at least two distinct from $u$ and thus $x$ has distance at least three from $u$. Hence $C(u;v)$ is small. But then $|C|\le |C(u;v)|+|C(v;u)| \le 5+5 = 10$ as desired.

So we may suppose that every vertex of degree three in $C$ has two neighbors of degree one in $C$. Next let us suppose there exists a vertex $v$ of degree three in $C$. If $v$ has only neighbors of degree one in $C$, then $|C|=4$ as desired. So we may suppose that $v$ has exactly one neighbor $u$ of degree at least two. Suppose $u$ has degree two in $C$. Apply Lemma~\ref{Deg2} to $u$. (1) does not hold since $|C|\ge 4$. If (3) holds, then $u$ has a neighbor $w$ of degree one in $C$. Since $v$ has degree three in $C$, $v\ne w$ and so $|C|\le 5$ as desired. So we may suppose that (2) holds. But then $|C|\le 7$ as desired.

So we may suppose that $u$ has degree three in $C$. But then $u$ has two neighbors of degree one in $C$. Hence $|C|=6$ as desired.

Finally we may suppose that there exist no vertices of degree three in $C$. That is, $C$ is a path $P=x_1x_2\ldots x_k$. We may suppose that $k\ge 5$ as otherwise $|C|=4$ as desired. But then apply Lemma~\ref{Deg2} to $x_3$. (1) does not hold as $|C|\ge 4$. (3) does not hold since $x_3$ has only neighbors of degree two in $C$. Hence (2) holds and $|C|\le 7$ as desired.
\end{proof}

\section{Proof of the Main Result}

We are now ready to prove Theorem~\ref{Ore4Best}. Here is an equivalent form in terms of potential:

\begin{theorem}
If $G$ is $4$-critical and $G$ is not $4$-Ore, then $p(G)\le 1.2$.
\end{theorem}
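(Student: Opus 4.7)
The plan is to derive a contradiction via a discharging argument applied to the minimum counterexample $G_0$. Sections~2--5 already give us that $G_0$ has no collapsible subset (Corollary~\ref{NoColl}), no diamond (Corollary~\ref{NoDiamond}), no triangle with two degree-three vertices (Corollary~\ref{NoTriangle}), no cycle of degree-three vertices (Corollary~\ref{NoCycle3}), and every component of $D_3(G_0)$ is a tree on at most ten vertices whose attachment to the rest of $G_0$ is constrained by Lemma~\ref{Components}. Fix a maximum independent set $I$ of $D_3(G_0)$ and assign each vertex $v$ the initial charge
$$\mu(v) \;=\; 4.8 - 1.5\,d(v) + 0.6\cdot\mathbf{1}[v\in I],$$
so that $\sum_v \mu(v)=p(G_0)>1.2$. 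Only degree-three vertices carry positive charge ($0.3$ or $0.9$); vertices of degree $4$ carry $-1.2$, and those of degree $\ge 5$ carry at most $-2.7$. The target is to design rules under which every vertex ends with nonpositive final charge, giving $\sum_v\mu(v)\le 0<1.2$, a contradiction.

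The first batch of rules would be local: each degree-three vertex routes its charge to its neighbors of degree $\ge 4$, splitting evenly or concentrating on a preferred pair when Corollary~\ref{NoTriangle} forbids two distinct receivers. For components of $D_3(G_0)$ of size at most three the absence of diamonds and multi-degree-three triangles supplies enough distinct high-degree receivers that these basic rules alone close the accounting. For larger components, Lemma~\ref{Components} forces every degree-$\ge 4$ neighbor of an interior vertex to be \emph{good}, i.e.\ to lie in an uncollapsible split of a $4$-Ore subgraph, which we call a \emph{gadget}.

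The main obstacle is that gadgets may be nested. A gadget $H$ is essentially a $4$-Ore graph with $p(H)\le 1.8$ (Theorem~\ref{OreComp}), so its vertices are already ``almost saturated'' by their own initial charge; dumping too much extra charge into $H$ will push some vertex of $H$ positive. When $H$ is itself enclosed in a larger gadget $H'$, this excess must be forced across $\partial H$ and eventually across $\partial H'$. To handle this I would introduce an \emph{iterated} discharging rule that, whenever a gadget vertex ends positive, forwards the excess across the boundary of the innermost gadget containing it and re-applies itself recursively. Termination is automatic because $G_0$ admits only a finite tree of Ore-decompositions (Proposition~\ref{OreSep}), and the structural results on uncollapsible splits (Lemmas~\ref{Uncoll1}--\ref{Uncoll3}) together with the existence of cocollapsible subsets in proper $4$-Ore graphs (Lemmas~\ref{Cocoll}, \ref{Cocoll2}) guarantee a valid outward route at every level.

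The remaining work is a finite local case check: because components of $D_3(G_0)$ have bounded size and gadgets have characterized structure, only finitely many local configurations need to be verified. The subset potential bounds of Lemmas~\ref{R1}--\ref{R3} and Corollary~\ref{54}, together with the exclusion of identifiable pairs (Lemma~\ref{IdentifiablePair}), rule out the borderline cases in which a vertex might still retain positive charge after discharging. The upshot is $\sum_v\mu(v)\le 0<1.2$, contradicting $p(G_0)>1.2$ and completing the proof.
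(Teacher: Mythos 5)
Your overall strategy is the mirror image of the paper's: the paper uses the charge $ch(v)=\deg(v)-3.2$ and pushes charge \emph{from} high-degree vertices \emph{toward} the degree-three vertices (with an iterated rule forcing charge outward from nested gadgets), whereas you renormalize so that degree-three vertices carry the surplus and must push it outward. The two are affinely equivalent in the aggregate, and your identification of nested gadgets and the need for an iterated, terminating rule matches the paper's Rule $i$. So the approach is essentially the same in spirit; the problem is that the part you defer as ``a finite local case check'' is precisely where the proof lives, and as stated your plan has a concrete hole.

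The gap is this: your first batch of rules routes the charge of each degree-three vertex to its neighbors of degree at least four, but a vertex $v$ with $d_{D_3(G_0)}(v)=3$ has \emph{no} such neighbor, and such vertices genuinely occur (components of $D_3(G_0)$ can have up to ten vertices and contain vertices of full degree in their component). Your target ``every vertex ends with nonpositive final charge'' therefore cannot be met by any vertex-local rule of the kind you describe; the charge $0.3|V(H)|+0.6\,\alpha(H)$ of a whole component $H$ must be routed through the tree $H$ and exported across its $|V(H)|+2$ boundary edges, and whether the receiving degree-four vertices (charge $-1.2$ each) can absorb it is exactly the inequality the paper proves at the \emph{component} level: $\sum_{v\in H}ch_T(v)\ge 0.4\alpha(H)$, which requires both the bound $|V(H)|\le 10$ from Lemma~\ref{Components} and the tree estimate $\alpha(H)\le \frac{2|V(H)|+1}{3}$ of Proposition~\ref{IndTree}. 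Neither the intra-component routing nor this independence-number-versus-size comparison appears in your proposal, and without them the accounting does not close (indeed, in the paper's orientation the interior degree-three vertices end strictly on the wrong side of their target and are bailed out only by the leaves' surplus, averaged over the component). Relatedly, fixing a single maximum independent set $I$ in the initial charge forces your case analysis to be uniform over the choice of $I$; the paper instead keeps $0.4\alpha(H)$ on the target side and invokes Lemma~\ref{Ind} where the structure of maximum independent sets matters. You also need the analogue of Rule 0 for triangles containing a degree-three vertex (Corollary~\ref{NoDiamond} and Corollary~\ref{NoTriangle} make these triangles edge-disjoint with a unique degree-three vertex), which your sketch omits.
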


We now prove Theorem~\ref{Ore4Best}.

\begin{proof}

Let $G_0$ be a minimum counterexample. As noted we may assume then that $|V(G_0)|\ge 5$.

\subsection{Discharging}

We now utilize Lemma~\ref{Components} as forbidding certain configurations that will allow us to perform discharging.

We define the charge of a vertex $v\in V(G_0)$, denoted by $ch(v)$ as:

$$ch(v)= deg(v)-3.2$$ 

Note that the charge of a vertex of degree three is $-.2$, of degree four is $.8$, and of degree at least five is $1.8$.

We will prove that $\sum_{v\in V(G_0)} ch(v) \ge .4 \alpha(D_3(G_0))$. To do this we apply an initial discharging rule, and then one futher rule
arbitrarily many times.

We use the following intial discharging rule: \\

\emph{Rule 0: If $v$ is a degree three vertex in a triangle $T=vw_1w_2$, then $w_1,w_2$ send $.4$ charge each to $v$.}\\

Let $ch_1(v)$ be the charge of a vertex $v$ after applying Rule 0. For all $i\ge 1$, we apply the following discharging rule where $ch_{i+1}(v)$ is the charge of a vertex $v$ after applying Rule $i$ as long as some charge will be sent by the rule. \\

\emph{ Rule $i$: For every vertex $v$, let $N_i(v) = \{u\in N(v)| d(u)=3, ch_i(v) < .6\}$. If $v$ has degree at least four and $ch_i(v) \ge .4 N_i(v)$, then $v$ sends $.4$ charge to each vertex in $N_i(v)$.} \\

Notice that $ch_i(v)\ge 0$ for all $v$ such that $d(v)\ge 4$. Similarly, for all $v$ with $d(v)=3$ and $ch_i(v) \ge 0$, $ch_{i+1}(v)=ch_i(v)$. Moreover, as some charge is sent there exists a vertex $u$ of degree three such that $ch_i(u) < .6$ and $ch_{i+1}(u) \ge .6$. This implies that eventually we must stop applying this rule since at every application at least one vertex of degree three goes from below $.6$ to at least $.6$ charge and such vertices never go below $.6$ again.

So let $T-1$ be the number of times we apply the above rule. We now apply one final rule:\\

\emph{Rule $T$: If $v$ is a vertex of degree at least four, then $v$ sends $.2$ charge to every vertex of degree three with $ch_T(v) < .6$ that $v$ did not already send charge to under a previous Rule. }

Let $ch_T(v)$ be the final charge of $v$.

\begin{claim}\label{CH4}
For all $v$ of degree at least four, $ch_T(v) \ge 0$.
\end{claim}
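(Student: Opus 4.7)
The plan is to establish, by induction on $i \ge 1$, the invariant
\[
ch_i(v) \;\ge\; 0.2\, |D^{*}_i(v)|
\qquad \text{for every $v$ with $d(v) \ge 4$,}
\]
where $D^{*}_i(v) := \{u \in N(v) : d(u)=3,\ u \notin S_i(v),\ ch_i(u) < 0.6\}$ and $S_i(v)$ denotes the set of vertices to which $v$ has sent charge during Rules $0, 1, \ldots, i-1$. Once the invariant holds at $i = T$, Claim~\ref{CH4} follows immediately: Rule $T$ sends out of $v$ exactly $0.2\,|D^{*}_T(v)|$ charge (its recipients being precisely the degree-$3$ neighbors of $v$ with charge less than $0.6$ to whom $v$ has not previously sent), so $ch^{\mathrm{final}}(v) = ch_T(v) - 0.2\,|D^{*}_T(v)| \ge 0$.

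For the base case $i = 1$, write $t = t(v)$ for the number of triangles through $v$ containing a degree-$3$ vertex and let $n_3(v)$ denote the number of degree-$3$ neighbors of $v$. I would first extract from Corollaries~\ref{NoDiamond} and~\ref{NoTriangle} the following: two triangles through $v$ cannot share any edge at $v$ (otherwise the four vertices involved would induce a forbidden $K_4 - e$), each such triangle uses exactly one degree-$3$ neighbor of $v$ together with a distinct degree-$\ge 4$ neighbor, and no two such triangles share the same degree-$3$ vertex. Hence $n_3(v) + t \le d(v)$. In addition, any degree-$3$ vertex lying in any triangle of $G_0$ receives $0.8$ in Rule $0$ and is therefore saturated, so $|D^{*}_1(v)| \le n_3(v) - t$. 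Since $ch_1(v) = d(v) - 3.2 - 0.4 t$, the invariant at $i = 1$ reduces to $d(v) - 3.2 \ge 0.2\,(n_3(v) + t)$, which holds because $n_3(v) + t \le d(v)$ and $d(v) \ge 4$.

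For the inductive step, suppose the invariant holds at step $i$. If $v$ does not fire Rule $i$, then $ch_{i+1}(v) = ch_i(v)$ and $S_{i+1}(v) = S_i(v)$, so $D^{*}_{i+1}(v) \subseteq D^{*}_i(v)$ (since degree-$3$ charges only increase) and the invariant persists. If $v$ does fire Rule $i$, then $v$ sends $0.4$ to every $u \in N_i(v)$, so $N_i(v) \subseteq S_{i+1}(v)$; any hypothetical $u \in D^{*}_{i+1}(v)$ is a degree-$3$ neighbor of $v$ with $ch_i(u) \le ch_{i+1}(u) < 0.6$, whence $u \in N_i(v) \subseteq S_{i+1}(v)$, contradicting $u \notin S_{i+1}(v)$. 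Hence $D^{*}_{i+1}(v) = \emptyset$, and the firing condition $ch_i(v) \ge 0.4\,|N_i(v)|$ of Rule $i$ gives $ch_{i+1}(v) \ge 0$, so the invariant holds trivially. The main obstacle is really the base case: its inequality is tight precisely at degree-$4$ vertices, which is also why Rule $T$ delivers only $0.2$ rather than $0.4$ per needy neighbor---any larger amount would break the claim at extremal degree-$4$ configurations (for example, a degree-$4$ vertex with four unsaturated degree-$3$ neighbors and no triangles).
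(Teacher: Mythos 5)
Your proof is correct and follows essentially the same approach as the paper: the same two ingredients appear — the firing condition of Rule $i$ keeps the charge of high-degree vertices nonnegative and leaves no needy neighbor unserved afterward, and Corollaries~\ref{NoDiamond} and~\ref{NoTriangle} pair each Rule-0 recipient with a distinct degree-$\ge 4$ neighbor so that the Rule-0 plus Rule-$T$ expenditure is at most $0.2\deg(v)$. Your invariant formulation is a slightly more careful packaging (in particular it justifies, rather than merely asserts, that a vertex which fires some Rule $i\ge 1$ has nothing left to send at Rule $T$), but the substance is identical.
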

\begin{proof}
Let $v$ be a vertex of degree at least four. By the very nature of Rule $i$, it follows that $ch_i(v)\ge 0$. If $v$ discharges for Rule $i$, then $v$ never discharges again. So we may assume that $v$ only discharges during Rule $0$ and Rule $T$. For Rule $T$, $v$ sends at most $.2$ to each neighbor and only if they did not receive charge in any earlier Rule, in particular for Rule $0$. Yet in Rule $0$, $v$ sends $.4$ to each neighbor of degree three in a common triangle. 

By Lemma~\ref{NoDiamond}, these triangles are edge-disjoint. By Lemma~\ref{NoTriangle}, there is at most one vertex of degree three in each triangle. So for every neighbor of $v$ sent $.4$ charge under Rule $0$ there is a corresponding neighbor of degree at least four. Hence $v$ sends at most $.2deg(v)$ charge under Rules $0$ and $T$. As $ch(v)=deg(v)-3.2$, we find that $ch_T(v) \ge ch(v)-.2deg(v) = .8deg(v)-3.2$ which is at least $0$ since $deg(v)\ge 4$.
\end{proof}

\begin{claim}\label{CH3}
Let $C\in \C_(D_3(G_0))$ and $v\in V(C)$. If $v$ has $m$ neighbors of degree at least four that are good for $v$ and $ch_T(v) < .6$, then $ch_T(v) \ge .2(2+m-{\rm deg}_C(v))$.
\end{claim}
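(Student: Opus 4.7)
The vertex $v$ has degree three (as $v\in D_3(G_0)$), so $ch(v)=-0.2$, and since degree-three vertices never send charge, $ch_T(v)$ equals $-0.2$ plus everything $v$ receives. I would first verify that $v$ lies in no triangle of $G_0$: otherwise $v$ sits in a triangle $v w_1 w_2$ with $d(w_1),d(w_2)\ge 4$ by Corollary~\ref{NoTriangle}, so Rule 0 sends a total of $0.8$ to $v$ and $ch_T(v)\ge 0.6$, contradicting the hypothesis. Hence $v$ gains $0$ under Rule 0, and each of its $\deg_C(v)$ degree-three neighbors also contributes $0$.

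Next I would analyze the contribution from each degree-$\ge 4$ neighbor $u$ of $v$. Each such $u$ sends at least $0.2$ to $v$: either $u$ fires some Rule $i$ with $i\ge 1$ that puts $v$ in $N_i(u)$ (then $u$ sends $0.4$), or $u$ does not, in which case Rule $T$ forces $u$ to send $0.2$ to $v$, since $d(v)=3$ and $ch_T(v)<0.6$. The crux of the claim is the following upgrade: every good-for-$v$ neighbor $u$ fires some Rule $i$ with $1\le i\le T-1$ and sends $0.4$ to $v$. Granted this, the total received by $v$ is at least $0.4m + 0.2(3-\deg_C(v)-m)$, yielding
\[
ch_T(v) \ge -0.2 + 0.6 + 0.2m - 0.2\deg_C(v) = 0.2(2+m-\deg_C(v)),
\]
as required.

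To establish the upgrade, fix a good-for-$v$ neighbor $u$. By definition, $u$ lies in an uncollapsible split $K(u_1,u_2)$ of a $4$-Ore graph $K$ not containing $v$, with $u$ playing the role of one of $u_1,u_2$. Since $v$ is in no triangle and $ch_T(v) < 0.6$, we have $v \in N_i(u)$ for every $i < T$; so it suffices to produce some iteration $i$ at which $ch_i(u) \ge 0.4|N_i(u)|$. I would apply Lemma~\ref{Uncoll3} to $K(u_1,u_2)$: every degree-three neighbor $w$ of $u$ in $K$ either lies in a triangle of $K$ minus the split vertex, in which case the triangle survives in $G_0$ and Rule 0 raises the charge of $w$ to $0.6$, so $w$ leaves $N_1(u)$ immediately, or has its two other neighbors of degree at least four, in which case $w$ eventually collects $0.2+0.2$ from them (at worst under Rule $T$), crossing $0.6$ and leaving $N_i(u)$ for some $i$. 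Thus $|N_i(u)|$ strictly shrinks as $i$ grows, while $ch_i(u)$ stays nonnegative by the argument of Claim~\ref{CH4}, so the threshold $ch_i(u)\ge 0.4|N_i(u)|$ is eventually met and $u$ fires, sending $0.4$ to $v$.

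The main obstacle I foresee is the last paragraph: rigorously tracking how $|N_i(u)|$ shrinks across the iterations, and carefully accounting for degree-three neighbors of $u$ in $G_0$ that are \emph{not} vertices of $K$ (for instance the edge $uv$ itself is absent in $K$, and the splitting may introduce additional neighbors to $u$). This bookkeeping is precisely where the iterated nature of the discharging is essential, and is the technical heart of the argument; once it is checked, the charge inequality $ch_T(v) \ge 0.2(2+m-\deg_C(v))$ is immediate from the preceding calculation.
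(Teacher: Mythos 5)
Your charge accounting is exactly the paper's: $v$ starts at $-.2$, each of its $3-\deg_C(v)$ neighbours of degree at least four contributes at least $.2$ (via Rule $T$, since $ch_T(v)<.6$ and $v$ lies in no triangle), and each good neighbour contributes an extra $.2$, giving $.2(2+m-\deg_C(v))$. In fact the paper's own proof of this claim is just a bare assertion of those two contributions and offers no justification whatsoever for the ``upgrade'' that a good-for-$v$ neighbour sends $.4$ rather than $.2$; your attempt to derive that upgrade from Lemma~\ref{Uncoll3} and the monotone shrinking of $N_i(u)$ is more than the paper records at this point. The bookkeeping gap you flag in your final paragraph (tracking which degree-three neighbours of $u$ leave $N_i(u)$, and handling neighbours of $u$ outside the gadget) is genuine, but it is a gap the paper's one-line proof shares rather than one introduced by your argument.
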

\begin{proof}
Note that $ch(v)=-.2$. If $v$ does not have $.6$ charge by Step $T$, then $v$ will recieve $.2$ charge from each neighor of degree at least four under Rule $T$. Moreover, $v$ recieves an additional $.2$ from each such neighbor that is good for $v$. Thus $ch_T(v)\ge .2(3-{\rm deg}_C(v)) + .2m -.2 = .2(2+m-{\rm deg}_C(v))$ as desired.



\end{proof}

Thus if all neighbors of degree at least four of vertices in $C$ are good (or adjacent to the other neighbor of degree at least four when the vertex is a leaf of $C$), Claim~\ref{CH3} says the following: If $v$ is a leaf of $C$, then $ch_T(v)\ge .6$. If $v$ has degree two in $C$, $ch_T(v)\ge .2$. Finally if $v$ has degree three in $C$, then $ch_T(v)\ge -.2$.


We need the following proposition to show that components of degree three vertices receive enough charge:

\begin{proposition}\label{IndTree}
If $H$ is a tree with maximum degree three, then $\alpha(H)\le \frac{2}{3}|V(H)|+\frac{1}{3}$.
\end{proposition}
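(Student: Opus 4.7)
The plan is to prove this by a simple double-counting argument using the complement of an independent set as a vertex cover, together with the fact that a tree on $n$ vertices has exactly $n-1$ edges.

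More precisely, first I would let $I$ be a maximum independent set of $H$ and set $J = V(H) \setminus I$, so $|I| = |V(H)| - |J|$. Since $I$ is independent, every edge of $H$ has at least one endpoint in $J$; that is, $J$ is a vertex cover of $H$. Counting edge-endpoint incidences of $J$ gives
$$|E(H)| \le \sum_{v \in J} \deg_H(v) \le 3|J|,$$
where the last inequality uses the hypothesis that $H$ has maximum degree at most three.

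Next I would use that $H$ is a tree, so $|E(H)| = |V(H)| - 1$. Combining with the previous inequality yields $|V(H)| - 1 \le 3|J|$, i.e. $|J| \ge \frac{|V(H)| - 1}{3}$. Rearranging,
$$\alpha(H) = |I| = |V(H)| - |J| \le |V(H)| - \frac{|V(H)|-1}{3} = \frac{2|V(H)| + 1}{3} = \frac{2}{3}|V(H)| + \frac{1}{3},$$
as desired.

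There is no real obstacle here; the statement is essentially a one-line consequence of the vertex-cover/independent-set duality together with the edge count of a tree. The only thing one might want to double-check is the edge case where $H$ has isolated vertices or very few vertices, but the argument above handles all cases uniformly (for $|V(H)| = 1$ the bound gives $\alpha(H) \le 1$, which is tight).
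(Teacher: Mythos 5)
Your proof is correct, and it takes a genuinely different route from the paper. The paper argues by induction on $|V(H)|$, peeling off either a leaf together with its degree-two neighbour or a degree-three vertex together with two pendant leaves, and tracking how $\alpha$ changes in each case. You instead give a direct, non-inductive double count: the complement $J$ of a maximum independent set is a vertex cover, so $|V(H)|-1=|E(H)|\le\sum_{v\in J}\deg_H(v)\le 3|J|$, which rearranges to the stated bound. Your argument is shorter and avoids the structural case analysis; it also shows slightly more, since the only facts used are that $H$ is connected (so $|E(H)|\ge|V(H)|-1$) and has maximum degree at most three -- the tree hypothesis is not really needed. The paper's induction, by contrast, is tailored to trees and yields the same bound through explicit decompositions. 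Both proofs are complete; yours is the cleaner one here, and the edge cases ($|V(H)|=1$, or $H$ a single edge) are handled uniformly as you note.
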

\begin{proof}
We proceed by induction on $|V(H)|$. It is straightforward to check that the lemma holds when $|V(H)|\le 3$. So suppose $|V(H)|\ge 4$. Suppose there exists a leaf $v$ of $H$ adjacent to a vertex $u$ of degree two in $H$. Apply induction to $H'=H\setminus \{u,v\}$. Thus $\alpha(H')\le \frac{2}{3}|V(H')|+\frac{1}{3}$. Yet $|V(H)|=|V(H')|+2$ and $|\alpha(H)|=|\alpha(H')|+1$. Hence $\alpha(H)-1\le \frac{2}{3}(|V(H)|-2)+\frac{1}{3}$ and the proposition follows.

So no such pair of vertices exist. But then there must exist a vertex $v$ of degree three in $C$ that has two leaves $u_1,u_2$ as neighbors (say by considering the lowest non-leaf vertex in a depth-first search tree). Apply induction to $H'=H\setminus \{u_1,u_2,v\}$. Thus $\alpha(H')\le \frac{2}{3}|V(H')|+\frac{1}{3}$. Yet $|V(H)|=|V(H')|+3$ and $|\alpha(H)|=|\alpha(H')|+2$. Hence $\alpha(H)-2\le \frac{2}{3}(|V(H)|-3)+\frac{1}{3}$ and the proposition follows.
\end{proof}

\begin{claim}\label{Comp}
If $H$ is a component of $D_3(G_0)$, then $\sum_{v\in V(H)} ch_T(v)\ge .4\alpha(H)$.
\end{claim}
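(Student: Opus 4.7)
The plan is to bound $ch_T(v)$ from below for each $v \in V(H)$ in terms of $d_H(v)$, sum those bounds, and compare against $0.4\alpha(H)$ via Proposition~\ref{IndTree} combined with the bound $|V(H)| \leq 10$ from Lemma~\ref{Components}. Throughout I may assume $ch_T(v) < 0.6$ for any vertex in question, since otherwise the hypothesis of Claim~\ref{CH3} fails but its conclusion is only stronger.

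First I would dispose of the small components $|V(H)| \leq 3$ directly. If $|V(H)|=1$, then $v$ has three neighbors of degree at least four and $d_H(v)=0$, so Claim~\ref{CH3} (with $m=0$) gives $ch_T(v) \geq 0.2(2+0-0)=0.4 = 0.4\alpha(H)$. If $|V(H)| = 2$, then both vertices are leaves of $H$, and Claim~\ref{CH3} with $m=0$ gives $ch_T(v) \geq 0.2$ for each, summing to $0.4 = 0.4\alpha(H)$. If $|V(H)| = 3$, then $H$ is a path and Lemma~\ref{Deg1} applies to each leaf (since $|C| \geq 3$), yielding $ch_T \geq 0.6$ for each leaf---either via Rule~$0$ when the two degree-$\geq 4$ neighbors are adjacent, or via Claim~\ref{CH3} with $m=2$ when they are both good---while Claim~\ref{CH3} gives at least $0$ for the middle vertex, for a total of at least $1.2 \geq 0.8 = 0.4\alpha(H)$.

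For $|V(H)| \geq 4$, let $n_i$ count the vertices of $H$ with $d_H=i$ (so $i\in\{1,2,3\}$). Lemma~\ref{Components} applies uniformly. If $d_H(v)=1$, then either both degree-$\geq 4$ neighbors of $v$ are good for $v$---so Claim~\ref{CH3} gives $ch_T(v) \geq 0.2(2+2-1)=0.6$---or the two neighbors are adjacent to each other, forming a triangle with $v$, in which case Rule~$0$ contributes $0.4+0.4=0.8$, giving $ch_T(v) \geq -0.2+0.8=0.6$. If $d_H(v)=2$, Lemma~\ref{Components} forces the unique degree-$\geq 4$ neighbor of $v$ to be good for $v$, so Claim~\ref{CH3} gives $ch_T(v) \geq 0.2$. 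If $d_H(v)=3$, Claim~\ref{CH3} (with $m=0$) gives $ch_T(v) \geq -0.2$. Summing yields $\sum_{v\in V(H)} ch_T(v) \geq 0.6n_1 + 0.2 n_2 - 0.2 n_3$. Since $H$ is a tree by Corollary~\ref{NoCycle3} and has maximum degree at most three, the degree-sum identity gives $n_1 + 2n_2 + 3n_3 = 2(n_1+n_2+n_3)-2$, hence $n_1 = n_3+2$; combined with $|V(H)| = 2n_3 + n_2 + 2$, the lower bound simplifies to $0.2|V(H)| + 0.8$.

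To close, Proposition~\ref{IndTree} gives $\alpha(H) \leq (2|V(H)|+1)/3$, so it suffices to verify $0.2|V(H)|+0.8 \geq 0.4(2|V(H)|+1)/3$, which rearranges to $|V(H)| \leq 10$---precisely the bound supplied by Lemma~\ref{Components}. The main obstacle is checking carefully that Lemma~\ref{Components} really does license the per-vertex bounds in the $|V(H)|\geq 4$ case: for leaves we must handle both the "adjacent degree-$\geq 4$ neighbors" alternative (where Rule~$0$ carries the argument) and the "both good" alternative (where Claim~\ref{CH3} carries it), while for non-leaves every degree-$\geq 4$ neighbor is good for free. The small-component cases also require a separate argument, since Lemma~\ref{Components}'s "goodness" conclusion is stated only for $|C| \geq 4$.
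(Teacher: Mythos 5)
Your proposal is correct and follows essentially the same route as the paper: dispose of $|V(H)|\le 3$ directly via Claim~\ref{CH3}, Rule~0 and Lemma~\ref{Deg1}, then for $|V(H)|\ge 4$ use Lemma~\ref{Components} to get the per-vertex bounds $.6/.2/-.2$ according to $d_H(v)=1/2/3$, sum to $.2|V(H)|+.8$ (your degree-count bookkeeping with $n_1=n_3+2$ is just a rephrasing of the paper's $|V(H)|-.8|E(H)|$ computation), and close with Proposition~\ref{IndTree} and the bound $|V(H)|\le 10$.
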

\begin{proof}
First suppose that $|V(H)|\le 3$. If $H$ is a vertex, then $ch_T(v)\ge .4 = .4\alpha(H)$ by Claim~\ref{CH3} as desired. If $H$ is an edge, then $ch_T(v)\ge .2$ by Claim~\ref{CH3} for each $v\in V(H)$. Hence $\sum_{v\in V(H)} ch_T(v)\ge .4 = .4\alpha(H)$. Hence $H$ is a path on three vertices $v_1v_2v_3$. By Lemma~\ref{Deg1}, the neighbors of degree at least four of $v_1$ are either good for $v$ or adjacent to each other. By Claim~\ref{CH3} or Rule 0, $ch_T(v_1)\ge .6$. Similarly, $ch_T(v_3)\ge .6$. Thus $\sum_{v\in V(H)} ch_T(v)\ge .6 -.2 + .6 = 1 \ge .8 = .4\alpha(H)$.

So we may assume that $|V(H)|\ge 4$. By Lemma~\ref{Components}. for every $v\in V(H)$, every neighbor of degree at least four of $v$ is either good for $v$ adjacent to another neighbor of degree at least four of $v$. This implies when combined with Rule 0 and Claim~\ref{CH3} that $ch_T(v)\ge 1-.4d_H(v)$ since leaves would have a final charge of $.6$, vertices of degree two would have a final charge of $.2$ and vertices of degree three would have a final charge of $-.2$. Thus,

$$\sum_{v\in V(H)} ch_T(v)\ge \sum_{v\in V(H)} (1-.4d_H(v)) = |V(H)| - .8 |E(H)|$$

Since $H$ is a tree, $|E(H)|=|V(H)|-1$. Hence

$$\sum_{v\in V(H)} ch_T(v)\ge \sum_{v\in V(H)} (1-.4d_H(v)) = |V(H)| - .8 (|V(H)|-1) = .2|V(H)|+.8 = .4 \frac{|V(H)|+4}{2}$$

However, $|V(H)|\le 10$ by Lemma~\ref{Components}. We claim then that for $|V(H)|\le 10$, $\frac{|V(H)|+4}{2} \ge \alpha(H)$. Suppose not. Yet $\alpha(H)\le \frac{2|V(H)|+1}{3}$ by Proposition~\ref{IndTree}. Thus $\frac{|V(H)|+4}{2} < \alpha(H) \le \frac{2|V(H)|+1}{3}$. Hence $3|V(H)|+12 < 4|V(H)|+2$. That is, $|V(H)| > 10$, a contradiction. This proves the claim. Hence 

$$\sum_{v\in V(H)} ch_T(v)\ge .4 \frac{|V(H)|+4}{2} \ge .4\alpha(H)$$

as desired.
\end{proof}

\begin{claim}
$\sum_{v\in V(G_0)} ch(v) \ge  .4\alpha(D_3(G_0))$.
\end{claim}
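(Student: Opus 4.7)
The plan is to observe that the discharging rules conserve total charge (every transfer moves some amount from one vertex to another, nothing is created or destroyed), and then to split the final charge sum according to whether a vertex has degree $\ge 4$ or degree exactly $3$ (since $G_0$ is $4$-critical, there are no vertices of degree below $3$).

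First I would write
\[
\sum_{v\in V(G_0)} ch(v) \;=\; \sum_{v\in V(G_0)} ch_T(v) \;=\; \sum_{v:\,d(v)\ge 4} ch_T(v) \;+\; \sum_{H\in \mathcal{C}(D_3(G_0))} \sum_{v\in V(H)} ch_T(v),
\]
using conservation of charge for the first equality and the partition of $V(G_0)$ into its degree--$\ge 4$ vertices and the vertex sets of the components of $D_3(G_0)$ for the second.

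Next I would bound each piece. By Claim~\ref{CH4}, every term in the first sum is nonnegative, so the first sum is $\ge 0$. By Claim~\ref{Comp}, the inner sum over each component $H$ is at least $.4\,\alpha(H)$. Finally, since the components of $D_3(G_0)$ are pairwise vertex-disjoint (and in fact pairwise nonadjacent within $D_3(G_0)$), a maximum independent set of $D_3(G_0)$ is the disjoint union of maximum independent sets of its components, giving $\alpha(D_3(G_0)) = \sum_{H\in \mathcal{C}(D_3(G_0))} \alpha(H)$.

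Putting the pieces together yields
\[
\sum_{v\in V(G_0)} ch(v) \;\ge\; 0 \;+\; \sum_{H\in \mathcal{C}(D_3(G_0))} .4\,\alpha(H) \;=\; .4\,\alpha(D_3(G_0)),
\]
as required. There is no real obstacle: the content is entirely carried by the previous two claims, and this final step is just collecting them. The only thing worth stating carefully is the conservation of charge, which holds because each Rule ($0$, each iterated Rule $i$, and Rule $T$) only moves charge from one vertex to another.
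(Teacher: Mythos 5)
Your proposal is correct and is essentially the same argument as the paper's: conservation of charge, nonnegativity of final charge on degree-$\ge 4$ vertices via Claim~\ref{CH4}, and summing Claim~\ref{Comp} over the components of $D_3(G_0)$ (the paper's proof miscites this as Claim~\ref{CH3}, but the content is the same). Your explicit remark that $\alpha(D_3(G_0))$ decomposes as the sum of $\alpha(H)$ over components is a small but welcome addition of care.
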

\begin{proof}
Note that $\sum_{v\in V(G_0)} ch(v) = \sum_{v\in V(G_0)} ch_T(v)$. By Claim~\ref{CH4}, $ch_T(v)\ge 0$ for all vertices $v$ with degree at least four. By Claim~\ref{CH3}, $\sum_{v\in V(H)} ch_T(v) \ge .4\alpha(H)$ for all components $H$ of $D_3(G_0)$. Thus $\sum_{v\in V(D_3(G))} ch_T(v) \ge .4\alpha(D_3(G_0))$ by adding over the components of $D_3(G_0)$. So $\sum_{v\in V(G_0)} ch_T(v) \ge  .4\alpha(D_3(G_0))$ as desired.
\end{proof}

Hence $|E(G_0)|\ge 1.6|V(G_0)|+.2\alpha(D_3(G_0))$ and so $p(G_0)\le 0$, a contradiction.

\end{proof}

\section{$4$-critical graphs with Ore-degree at most seven are $4$-Ore}



If $H$ is a graph, let $s(H) = |E(H)|-|V(H)|+\alpha(H)$. Note that $s(H) = \sum_{C\in \C(H)} s(H)$ and that $s(H)$ is an integer. Moreover if $H$ is connected, then $|E(H)|\ge |V(H)|-1$ and hence $s(H)\ge 0$. Thus $s(H)\ge 0$ for all graphs. We may characterize the graphs with small $s$ as follows:

\begin{proposition}\label{S}
If $H$ is connected, then
\begin{itemize}
\item $s(H)=0$ if and only if $H$ is a vertex or edge,
\item $s(H)=1$ if and only if $H$ is a triangle or a path of length two or three,
\item $s(H)=2$ if and only if $H$ is 
\begin{itemize}
\item a cycle of length four or five,
\item a triangle with a pendant edge or path of length two,
\item a tree with $\alpha(H)=3$ (i.e. a path of length four or five or a claw with up two pendant edges)
\end{itemize}
\end{itemize}
\end{proposition}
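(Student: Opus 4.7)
The plan is to decompose $s(H)$ into two non-negative integer invariants and classify connected graphs by the resulting pair. For a connected graph $H$, rewrite $s(H) = (\alpha(H) - 1) + c(H)$, where $c(H) = |E(H)| - |V(H)| + 1$ is the cyclomatic number. Both summands are non-negative integers: $\alpha(H) \ge 1$ trivially, and $c(H) \ge 0$ because $H$ is connected. Consequently, for each target value $s(H) \in \{0,1,2\}$ only finitely many pairs $(\alpha(H), c(H))$ arise, and I would handle each pair in turn.

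The cases $s = 0$ and $s = 1$ are immediate. For $s = 0$, the only pair is $(\alpha, c) = (1, 0)$, giving a tree with $\alpha = 1$, necessarily a complete tree, so $H \in \{K_1, K_2\}$. For $s = 1$, either $(\alpha, c) = (1, 1)$, forcing $H$ complete and unicyclic (so $H = K_3$), or $(\alpha, c) = (2, 0)$, a tree with $\alpha = 2$; the bipartite bound $\alpha(H) \ge \lceil |V(H)|/2 \rceil$ on a tree forces $|V(H)| \le 4$, leaving $P_3$ and $P_4$.

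The bulk of the proof is the case $s = 2$, where the possible pairs are $(1,2), (2,1), (3,0)$. The pair $(1, 2)$ would require $H = K_n$ with $\binom{n}{2} - n + 1 = 2$, which has no integer solution. For $(2, 1)$, $H$ is unicyclic with $\alpha = 2$; its unique cycle $C_k$ satisfies $\alpha(C_k) \le 2$, so $k \in \{3,4,5\}$. For $k \in \{4,5\}$, any attached pendant vertex can be combined with two suitably chosen non-adjacent cycle vertices to yield a size-3 independent set, so $H = C_4$ or $H = C_5$. For $k = 3$, I would directly check that the only attachments preserving $\alpha = 2$ are a single pendant edge or a single pendant path of length two; every other option (two pendants, pendants at distinct triangle vertices, or an attached path of length at least 3) exhibits an explicit size-3 independent set using the farthest pendant vertex together with two non-adjacent triangle vertices. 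For $(3, 0)$, $H$ is a tree with $\alpha = 3$; the bipartite bound forces $|V(H)| \le 6$, so I would enumerate trees on 4, 5, and 6 vertices by degree sequence and compute $\alpha$ for each, isolating precisely $K_{1,3}$, the claw with one pendant edge, the claw with two pendant edges, $P_5$, and $P_6$.

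The main difficulty is the finite bookkeeping in the $s = 2$ case: for every unicyclic extension of $C_3, C_4, C_5$ not on the list, and every tree on at most six vertices not on the list, I must exhibit a size-$\ge 3$ independent set to rule it out. These are elementary checks, but they must be carried out exhaustively---organized by attachment structure for the unicyclic graphs and by degree sequence for the trees---so the challenge is organizational rather than conceptual.
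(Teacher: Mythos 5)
Your proposal is correct and follows essentially the same route as the paper: a case split on the pair consisting of the excess $|E(H)|-|V(H)|$ (equivalently the cyclomatic number) and $\alpha(H)$, followed by classifying the trees and unicyclic graphs with small independence number. If anything, your version is slightly more complete than the paper's, since you explicitly dispose of the degenerate pair $(\alpha,c)=(1,2)$ and carry out the enumeration behind the parenthetical list of trees with $\alpha=3$, both of which the paper leaves implicit.
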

\begin{proof}
Suppose $s(H)=0$. Then $|E(H)|=|V(H)|-1$ and $\alpha(H)=1$. Thus $H$ is a tree that is also a clique, which is to say a vertex or edge.

So suppose $s(H)=1$. Then either $|E(H)|=|V(H)|-1$ and $\alpha(H)=2$, or, $|E(H)|=|V(H)|$ and $\alpha(H)=1$. In the former case, $H$ is a tree with independence number two. Thus $H$ is a path on three or four vertices. In the latter case, $H$ is a tree plus one edge but also a clique. Hence $H$ is a triangle.

Finally suppose $s(H)=2$. If $|E(H)|=|V(H)|-1$, then $H$ is a tree. In that case, $\alpha(H)=3$ as desired. If $|E(H)|=|V(H)|$, then $H$ is a tree plus an edge and also $\alpha(H)=2$. Now $H$ contains a cycle $C$. $C$ has length at most five. If $|C|=4$ or $5$, then it follows that $H=C$ as desired. So we may suppose that $|C|=3$. But then it follows once again by independence number that $H$ is a triangle with at most one pendant tree and in fact that tree is a path on one or two vertices as desired.
\end{proof}

Here is another useful lemma:

\begin{lemma}\label{sOreDeg7}
If $G$ is a $4$-critical graph of Ore-degree at most seven, then $p(G)=.6 s(D_3(G))$.
\end{lemma}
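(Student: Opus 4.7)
The plan is a direct calculation exploiting the tight structural constraints forced by Ore-degree at most seven. First I would observe that since $G$ is $4$-critical, $\delta(G) \ge 3$, and since no edge of $G$ has endpoint-degree sum exceeding seven, no vertex can have degree five or more (any neighbor of such a vertex has degree $\ge 3$, giving sum $\ge 8$) and no edge can join two vertices of degree four (sum $= 8$). Hence every vertex of $G$ has degree three or four, and every degree-four vertex has all its neighbors in $D_3(G)$.

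Second, I would set up the basic counts. Let $n = |V(G)|$, $m = |E(G)|$, $n_3 = |V(D_3(G))|$, $n_4 = n - n_3$, and $m_3 = |E(D_3(G))|$. By the observation above, every edge either lies in $D_3(G)$ or joins $D_3(G)$ to a degree-four vertex, and the latter edges number exactly $4 n_4$ (counting from the degree-four side, whose vertices have all neighbors in $D_3(G)$). Double-counting the degree-sum over $V(D_3(G))$ yields $3 n_3 = 2 m_3 + 4 n_4$, so
\[
n_4 = \frac{3 n_3 - 2 m_3}{4}, \qquad n = \frac{7 n_3 - 2 m_3}{4}, \qquad m = m_3 + 4 n_4 = 3 n_3 - m_3.
\]

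Third, I would substitute these identities into $p(G) = 4.8\, n - 3\, m + 0.6\, \alpha(D_3(G))$ and simplify:
\[
p(G) = 1.2\,(7 n_3 - 2 m_3) - 3\,(3 n_3 - m_3) + 0.6\, \alpha(D_3(G)) = 0.6\bigl(m_3 - n_3 + \alpha(D_3(G))\bigr) = 0.6\, s(D_3(G)),
\]
as desired.

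There is no real obstacle here: the lemma is a one-shot identity check. The only genuine content is the opening degree-sequence observation, which forces both $n$ and $m$ to be linear combinations of $n_3$ and $m_3$; once that is in hand, the coefficients $4.8$, $3$ and $0.6$ conspire to collapse into the single factor $0.6$ in front of $s(D_3(G))$.
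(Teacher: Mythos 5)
Your proof is correct and follows essentially the same route as the paper's: both reduce everything to the observation that Ore-degree at most seven forces every vertex to have degree $3$ or $4$ with no two degree-four vertices adjacent, then double-count degrees and substitute into the definition of $p$. The only cosmetic difference is that you eliminate $n_4$ globally via $3n_3 = 2m_3 + 4n_4$, while the paper distributes the crossing-edge count over the components of $D_3(G)$; the arithmetic is identical.
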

\begin{proof}
Let $m$ be the number of vertices of degree 4. Thus $|E(G)|= 4m + |E(D_3(G))|$ and $|V(G)|=m+|V(D_3(G))|$. Hence 

$$p(G) = 4.8(m+|V(D_3(G))|) -3(4m +|E(D_3(G))|) + .6\alpha(D_3(G)) = -7.2m + p(D_3(G))$$

Furthermore, we may note that $p(D_3(G)) = \sum_{C\in \C(D_3(G))} p(C)$, where $\C(D_3(G))$ denotes the connected components of $D_3(G)$. Hence,

$$p(G) = \sum_{C\in \C(D_3(G))} (p(C) - 1.8|E(V(C), V(G\setminus C))|)$$

However, as all of the vertices in $D_3(G)$ have degree three in $G$ it is easy to note that $|E(V(C), V(G\setminus C)| = 3|V(C)|-2|E(C)|$ by summing degrees. Thus

$$p(G) = \sum_{C\in \C(D_3(G))} .6(|E(C)|-|V(C)|+\alpha(C)) = .6s(D_3(G))$$.
\end{proof}

\begin{theorem}
If $G$ is a $4$-critical graph with Ore-degree at most seven, then $G$ is $4$-Ore.
\end{theorem}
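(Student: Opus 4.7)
The plan is induction on $|V(G)|$ with base case $G=K_4$ (which is $4$-Ore by definition). For the inductive step, let $G$ be a minimum-vertex counterexample: $G$ is $4$-critical of Ore-degree at most seven but is not $4$-Ore, so $G\ne K_4$ and $|V(G)|\ge 5$. Applying Theorem~\ref{Ore4Best} in its sharper non-$4$-Ore alternative yields $|E(G)|\ge 1.6|V(G)|+.2\alpha(D_3(G))-.4$, which rearranges to $p(G)\le 1.2$. Combining with Lemma~\ref{sOreDeg7} gives $s(D_3(G))\le 2$.

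I would next split on whether $G$ admits a $2$-cut. If so, Proposition~\ref{OreSep} expresses $G$ as the Ore-composition of two $4$-critical graphs $G_1,G_2$ each on fewer vertices than $G$. Lemma~\ref{4CritOre7} guarantees that both inherit Ore-degree at most seven, so by the minimality of $G$ both are $4$-Ore, forcing $G$ to be $4$-Ore as well -- contradiction. Hence we may assume $G$ is $3$-connected.

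In the $3$-connected case I would use Proposition~\ref{S} to enumerate the isomorphism types of components of $D_3(G)$ forced by $s(D_3(G))\le 2$ (vertex, edge, triangle, path of length $2$ or $3$, $C_4$, $C_5$, triangle with a pendant edge or a pendant path of length two, or tree of independence number three). Because $G$ has Ore-degree at most seven, every vertex of $G$ has degree exactly three or four, no two degree-four vertices are adjacent, and each degree-four vertex has all four of its neighbors in $D_3(G)$; in particular $4m=3n_3-2|E(D_3(G))|$, where $m=|V(G)|-|V(D_3(G))|$ and $n_3=|V(D_3(G))|$. For each possible structure of $D_3(G)$ I would combine this counting identity with $3$-connectedness to derive a contradiction -- typically either by a parity obstruction on $m$, by producing an explicit proper $3$-coloring of $G$ (contradicting $4$-criticality), or by identifying a $K_4-e$ subgraph whose two degree-two vertices either form an identifiable pair (in the spirit of Lemma~\ref{IdentifiablePair}, now applied to $G$ itself rather than to $G_0$) or witness a $2$-cut in $G$.

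The principal obstacle is this last case analysis. Although each individual configuration is in principle routine, uniformly ruling out all of them requires care, and the cleanest route appears to be to adapt the identification/extension machinery developed in Sections~2--4 for the minimum counterexample $G_0$ to Theorem~\ref{Ore4Best} so that it applies to our $3$-connected $G$. Specifically, one observes that a degree-four vertex together with two of its neighbors that lie in a common short component of $D_3(G)$ typically produces a collapsible subset or identifiable pair, yielding either a $2$-cut (contradicting $3$-connectedness) or a proper $3$-coloring of $G$ (contradicting $4$-criticality). Systematically carrying this out for each of the finitely many structures listed by Proposition~\ref{S} finishes the proof.
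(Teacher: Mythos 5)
Your setup is correct and matches the paper: a minimum counterexample $G$ satisfies $p(G)\le 1.2$ by Theorem~\ref{Ore4Best}, hence $s(D_3(G))\le 2$ by Lemma~\ref{sOreDeg7}, and the $2$-cut case is disposed of exactly as in the paper via Proposition~\ref{OreSep}, Lemma~\ref{4CritOre7} and minimality. You should also record the lower bound: since degree-four vertices form an independent set (Ore-degree at most seven), $D_3(G)$ must contain an odd cycle, so $s(D_3(G))\ge 1$.

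The gap is in the remaining case analysis, which you leave essentially unexecuted and for which your proposed tools do not suffice. First, you suggest reusing the identifiable-pair/collapsible-set machinery of Sections~2--4 ``applied to $G$ itself rather than to $G_0$,'' but every lemma there (e.g.\ Lemma~\ref{IdentifiablePair}, Corollary~\ref{NoColl}) is proved under the hypothesis $p(G_0)>1.2$, which is exactly what \emph{fails} for your $G$; the adaptation is not a routine transfer, and the paper does not attempt it. Instead the paper proves one tailored structural claim (no triangle with exactly two degree-three vertices) by a bespoke potential/extension argument. Second, and more importantly, the heart of the paper's proof is absent from your sketch: one must show that \emph{at least two} components of $D_3(G)$ contain odd cycles (this is done by a delicate recoloring argument that swaps a single degree-four vertex $v$ into the ``color 3'' class in place of its degree-three neighbor $w$, using the no-triangle claim to keep $D_3(G)\cup\{v\}\setminus\{w\}$ bipartite). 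Since $s(D_3(G))\le 2$, both such components are then triangles $C_1,C_2$, and the contradiction comes from constructing, for each of the nine pairs $(u_i,v_j)$ of degree-four neighbors of $C_1$ and $C_2$, a $5$-cycle $C_{ij}$ through $u_i$ and $v_j$ in $D_3(G)\cup\{u_i,v_j\}$, and then running a counting and parity argument on the resulting sets $X$, $Y$, $Z$ of degree-three vertices ($|X|=9$ is odd but $X$ would have to induce a perfect matching). None of your proposed mechanisms (``parity obstruction on $m$,'' an explicit $3$-coloring, or a $K_4-e$ subgraph) is shown to handle this final configuration, and it is precisely the configuration that survives all the easier eliminations. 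As written, the proposal is a correct reduction plus an unproved core.
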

\begin{proof}
Let $G$ be a counterexample with a minimum number of vertices. By Theorem~\ref{Ore4Best}, $p(G)\le 1.2$. By Lemma~\ref{sOreDeg7}, $s(D_3(G))\le 2$. That is, $\sum_{C\in \C(D_3(G))} s(C) \le 2$. As $s(D_3(G))\ge 0$, we find that $p(G)\ge 0$. Yet, as $G$ is not $3$-colorable, $D_3(G)$ is not bipartite. Hence there must be at least one component $C_1$ of $D_3(G)$ with an odd cycle. It follows that $s(C_1)\ge 1$. Thus $p(G)\ge .6$. 

\begin{claim}\label{No2Cut}
There does not exist a $2$-vertex-separation of $G$.
\end{claim}
\begin{proof}
Suppose not. But then $G$ is the Ore-composition of two graphs $G_1$ and $G_2$ by Proposition~\ref{OreSep}. Yet as $G$ has Ore-degree at most seven, it follows from Lemma~\ref{4CritOre7} that $G_1$ and $G_2$ have Ore-degree at most seven. By the minimality of $G$, $G_1$ and $G_2$ are $4$-Ore. Thus $G$ is also $4$-Ore, a contradiction.
\end{proof}

\begin{claim}\label{NoTriangle2}
There does not exist a triangle in $G$ with exactly two vertices of degree three.
\end{claim}
\begin{proof}
Let $T=u_1u_2v$ be the triangle with $d(u_1)=d(u_2)=3$ and $d(v)=4$. Let $z_1$ be the other neighbor of $u_1$ and $z_2$ the other neighbor of $u_2$.

If $z_1=z_2$, then $\{z_1,v\}$ is a $2$-separation of $G$ contradicting Claim~\ref{No2Cut}. So we may suppose that $z_1\ne z_2$. Consider $G' = G\setminus\{u_1,u_2\} + z_1z_2$. As $G$ is not $3$-colorable, neither is $G'$. So $G'$ contains a $4$-critical subgraph $K$. Let $R=V(K)$. Note that $R\cap T=\emptyset$.



Let $S$ be the boundary of $R$. As every vertex of $K$ has degree at least three in $K$, it follows that all the vertices in $S\setminus \{z_1,z_2\}$ have degree four in $G$. First suppose that at least one of $z_1,z_2$ is degree three in $K$. In this case, then $K$ has Ore-degree at most seven. To see this, note that every edge $e\in E(K)\cap E(G)$ has Ore-degree at most that it had in $G$ which is seven.  The only other edge is $z_1z_2$ but since at least one of $z_1,z_2$ has degree three in $K$, the Ore-degree of $K$ is at most seven. 

By Lemma~\ref{sOreDeg7}, $p(K)= .6s(D_3(K)) = .6\sum_{C\in \C(D_3(K))} s(C)$.  Let $\C_K$ be the components of $D_3(G)\cap K$. As every component in $\C_K$ is an induced subgraph of a component in $D_3(G)$, it follows that $\sum_{C\in \C(D_3(K))}\alpha(C) \ge \sum_{C\in \C_K} \alpha(C)$. 

Let $S'=D_3(K)\setminus D_3(G)$. Hence $S\supseteq S'\supseteq S\setminus \{z_1,z_2\}$. As every vertex in $S'$ has neighbors of only degree three in $G$, it follows that $\sum_{C\in \C(D_3(K))}|E(C)| = 3(|S'|)  + \sum_{C\in \C_K} |E(C)|$ while $\sum_{C\in \C(D_3(K))}|V(C)| = |S'| + \sum_{C\in \C_K} |V(C)|$. Thus $p(K) \ge \sum_{C\in C_K}s(C) + 1.2|S'|$. Yet $s(C)\ge 0$ for all $C\in C_K$. Thus $p(K)\ge 1.2 |S'|$. This implies that $|S'|=1$. But then $G$ has a $2$-separation formed by $v$ and the unique element of $S'$, contradicting Claim~\ref{No2Cut}.

So we may assume that $z_1$ and $z_2$ have degree four in $K$. Similar calculations ($p(K)=.6s(D_3(K))-.6$) show that $|S'|\le 2$ in this case. Hence $|S|\le 4$. Let $R'$ be a critical extension of $R$ with extender $W$ such that if possible the extension is not total, and subject to that has maximum core size. Since $|S|\le 4$, it is not hard to see that $W$ has Ore-degree at most seven. Hence $W$ is $4$-Ore as $G$ is a minimum counterexample. 

If $W$ is not total or has core size two, then $p(G)\le p(K) + p(W) - 3.6$ as every vertex in $S$ has degree at least four in $G$. As $G$ is a minimum counterexample, $p(K),p(W)\le 1.8$ and so $p(G)\le 0$, a contradiction. Thus every critical extension of $R$ is total.
 
Next suppose that $W$ has a core of size three. Let the three vertices identified in $W$ be $w_1, w_2, w_3$. Since $z_1,z_2$ must be identified to one vertex, say $w_1$, we find that $|S|=4$ and the other two vertices in $S$ must be identified to $w_2$ and $w_3$ respectively. But now after collapsing the diamond $w_1u_1u_2v$ in $W$, there must be a separation of order two. This implies that there exists a separation of order two in $G$, contradicting Claim~\ref{No2Cut}.

So we may assume that every critical extension of $R$ has a core of size one and is total. By Lemma~\ref{Coll}, it follows that $R$ is collapsible. But then $R\cup T$ is collapsible. Yet the boundary of $R\cup T$ contains a vertex with degree two in $R\cup T$, namely $z$, a contradiction as in the above calculation.
\end{proof}

\begin{claim}
There exist at least two components of $D_3(G)$ containing odd cycles.
\end{claim}
\begin{proof}
Suppose that $C_1$ is the only component of $D_3(G)$ containing an odd cycle. Let $D_4(G)$ denote the vertices of degree four in $G$.

First suppose $s(C_1)=2$. Since $D_3(G)$ cannot be bipartite, it follows that $C_1$ is a cycle of length five. Let $v\in V(C_1)$ and $w$ be the neighbor of $v$ of degree at least four. Note that $D_3(G)\cup\{w\}\setminus\{v\}$ is bipartite by Claim~\ref{NoTriangle2}. Now color $D_4(G)\cup\{v\}\setminus\{w\}$ with color $3$ and extend this coloring to $G$, a contradiction. 

So we may assume that $s(C_1)=1$ and hence $C_1$ is a triangle by Proposition~\ref{S}. First suppose that $s(D_3(G))=1$. Let $v$ be a neighbor of degree four of a vertex $w$ in $C_1$. Note that $D_3(G)\cup \{v\} \setminus \{w\}$ is bipartite since $v$ is not adjacent to two adjacent vertices of degree three by Claim~\ref{NoTriangle2}. Now we may color $D_4(G)\cup\{w\} \setminus \{v\}$ with color $3$ and extend the coloring to the rest of $G$ which is bipartite, a contradiction. 

So we may suppose that $s(D_3(G))=2$. That is, there exists exactly one component $C_2\ne C_1$ of $D_3(G)$ such that $s(C_2)=1$. As we supposed that $C_1$ is the only component of $G$ containing an odd cycle, $C_2$ is a path $P=p_1\ldots p_k$ where $k=3$ or $4$ by Proposition~\ref{S}. Let $v$ be a neighbor of degree at least four of a vertex in $C_1$ such that $v$ is not adjacent to both $p_1$ and $p_k$. Such a vertex $v$ exists since the three neighbors of vertices in $C_1$ with degree at least four are distinct by Claim~\ref{NoTriangle2}, and there can be at most two vertices of degree at least four adjacent to both $p_1$ and $p_k$. Let $w$ denote the neighbor of $v$ in $C_1$. Note that $D_3(G)\cup \{v\} \setminus \{w\}$ is bipartite since $v$ is not adjacent to two adjacent vertices of degree three by Claim~\ref{NoTriangle2} and $v$ is not in an odd cycle with the vertices of $C_2$ given the choice of $v$. So we may color $D_4(G)\cup\{w\} \setminus \{v\}$ with color $3$ and extend the coloring to the rest of $G$ which is bipartite, a contradiction. 
\end{proof}

So we may assume there exist at least two components $C_1,C_2$ containing an odd cycle. Yet $s(C_1)+s(C_2)\le 2$ as $s(D_3(G))\le 2$. So $s(C_1)=s(C_2)=1$ and thus $C_1,C_2$ are triangles by Proposition~\ref{S}. Let $V(C_1)=\{a_1,a_2,a_3\}$ and $V(C_2)=\{b_1,b_2,b_3\}$. Let $u_i$ denote the neighbor of degree at least four of $a_i$ and $v_j$ denote the neighbors of degree at least four of$b_j$. By Claim~\ref{NoTriangle2}, $u_i\ne u_j$ and $v_i\ne v_j$ for all $i,j \in \{1,2,3\}$.

\begin{claim}\label{UVdistinct}
All of $u_1,u_2,u_3,v_1,v_2,v_3$ are distinct.
\end{claim}
\begin{proof}
Suppose not. Thus there exist $i,j$ such that $u_i=v_j$.  Note that $D_3(G)\cup \{v\} \setminus \{a_i,b_j\}$ is bipartite since $v$ is not adjacent to two adjacent vertices of degree three by Claim~\ref{NoTriangle2}. So we may color $D_4(G)\cup\{a_i,b_j\} \setminus \{v\}$ with color $3$ and extend the coloring to the rest of $G$ which is bipartite, a contradiction. 
\end{proof}

\begin{claim}\label{5Cycle}
For all $i,j\in \{1,2,3\}$, there exists a $5$-cycle $C_{ij}$ containing $u_i$ and $v_j$ and contained in $D_3(G)\cup\{u_i,v_j\}-(C_1\cup C_2)$.
\end{claim}
\begin{proof}
Let $i,j\in\{1,2,3\}$. Let $\phi_{ij}$ be the coloring that assigns the color 3 to $D_4(G)\cup\{a_i,b_j\}\setminus \{u_i,v_j\}$. As $G$ is not $3$-colorable, $\phi_{ij}$ cannot be extend to a $3$-coloring of $G$ for all $i$ and $j$. That is, $D_3(G)\cup\{u_i,v_j\}-\{a_i,b_j\}$ is not bipartite and so contains an odd cycle $C_{ij}$. But then $C_{ij}$ must have size five and contain $u_i$ and $v_j$. Note that $C_{ij}$ does not contain any vertex in $C_1\cup C_2$ since these vertices are either $a_i$ or $b_j$ or are in a different component of $D_3(G)\cup\{u_i,v_j\}-\{a_i,b_j\}$ than $u_i$ and $v_j$.
\end{proof}

For each $i,j\in\{1,2,3\}$, let $C_{ij}=u_ix_{ij}v_jz_{ij}y_{ij}$. That is to say that $u_i$ and $v_j$ have a neighbor of degree three $x_{ij}$ in common as well as a path of length three $u_iy_{ij}z_{ij}v_j$ whose internal vertices are degree three. Let $X=\{x_{ij}|1\le i,j\le 3\}$, $Y=\{y_{ij}| 1\le i,j \le 3\}$ and $Z=\{z_{ij}| 1\le i,j\le 3\}$. Note that $(X\cup Y \cup Z)\cap (C_1\cap C_2) = \emptyset$ since each $C_{ij}$ is disjoint from $C_1\cup C_2$.

\begin{claim}\label{Xdistinct}
All $x_{ij}$ are distinct.
\end{claim}
\begin{proof}
Suppose not. Note that $x_{ij}\ne x_{kl}$ when $i\ne k$ and $j\ne l$ since otherwise that vertex in $X$ has degree four since its neighbors $u_i,u_k,v_j,v_l$ are all distinct, a contradiction.

We may suppose without loss of generality then, by the symmetry of $C_1$ and $C_2$, that there exist $i,j,k$ with $j\ne k$ such that $x_{ij}=x_{ik}$. Suppose without loss of generality that $i=j=1,k=2$. But then $x_{11}=x_{12}$ is a vertex of degree three with neighbors $u_1,v_1,v_2$. That says that all the neighbors of $x_{11}$ are degree four. Hence $x_{11}\not\in Y\cup Z$ since every vertex of $Y\cup Z$ has a neighbor of degree three. Let $w_1,w_2$ be the neighbors $w_1,w_2$ of $u_1$ distinct from $a_1$ and $x_{11}$. Since the neighbors of $u_1$ corresponds to the three elements $y_{1m}$ of $Y$, it follows that one of these neighbors corresponds to at least two such elements of $Y$. 

We may suppose without loss of generality that $w_1$ is that vertex. Hence $w_1$ has a neighbor of degree three, call it $v$ such that $t$ has two neighbors in $\{v_1,v_2,v_3\}$. Hence at least one of $v_1,v_2$ is a neighbor of $t$. We may suppose without loss of generality that $v_1$ is a neighbor of $t$. But then $v_1$ has neighbors $b_1, x_{11}, t$ and a fourth neighbor, call it $s$. Now $v_1$ has a common neighbor with both $u_2$ and $u_3$ distinct from $b_1$. But $x_{11}$ and $t$ are not incident with $u_2$ or $u_3$. Hence $s$ is adjacent to both $u_2$ and $u_3$. But then $v_1$ is not in path of length three $u_ny_{n1}z_{n1}v_1$ with at least one $n\in \{2,3\}$. This follows since such a path would have to go through $t$ and $w_1$. But $w_1$ has neighbors $t$ and $u_1$ and hence is adjacent to at most one of $u_2$ or $u_3$. Thus the pair $u_n,v_1$ is not in a cycle $C_{n1}$, contradicting Claim~\ref{5Cycle}. 
\end{proof}

Since each $u_i$ has at most three neighbors outside $V(C_1)\cup V(C_2)$, it follows that $\bigcup_i N_(u_i) \setminus V(C_1) = X$. Similarly, $\bigcup_j N(v_j)\setminus V(C_2) = X$. 

\begin{claim}\label{XZpaired}
For each $i,j\in\{1,2,3\}$, $x_{ij}=z_{kl}$ for at most one pair $k,l$ and only then if $l=j$ and $x_{ij}$ is adjacent to $x_{km}$ for some $m$.
\end{claim}
\begin{proof}
This follow since $x_{ij}$ is adjacent to $u_i$ and $v_j$ and a vertex of degree three call it $w$. But then since $x_{ij}=z_{kl}$, $u_kwz_{kl}v_l$ is a path. Since $v_l\ne u_i$ by Claim~\ref{UVdistinct}, we find that $v_l=v_j$, i.e. $l=j$. Furthermore, by Claim~\ref{Xdistinct}, it follows that $w\in X$  and hence $w=x_{km}$ for some $m$. But then $k,l$ can be the only such pair as claimed.
\end{proof}

Now every element of $Z$ corresponds to at most one $X$ since the vertices in $X$ are distinct by Claim~\ref{Xdistinct}. Hence by Claim~\ref{XZpaired} each $z_{kl}$ is a distinct vertex. Thus $|Z|=9$. But then since $|X|=|Z|=9$, every element of $Z$ correspends to exactly one $X$. On the other hand, every element of $X\cap Z$ is adjacent to another element of $X$ by Claim~\ref{XZpaired}. However, every element of $X$ has at most one neighbor of degree three and hence at most neighbor in $X$. This implies then that $X$ induces a matching. This is a contradiction since the size of $X$ is odd. 
\end{proof}

\section{Open Questions}

Kostochka and Yancey characterized the $4$-critical graphs that satisfy $|E(G)|=\frac{5|V(G)|-2}{3}$ as precisely the $4$-Ore graphs. A much shorter proof of Theorem~\ref{Main} would exist if the next \'level' of graphs could be characterized:

\begin{question}
What is the structure of the $4$-critical graphs that satisfy $|E(G)|=\frac{5|V(G)|-1}{3}$?
\end{question}

However, this seems like a difficult problem given the examples provided by Kostochka and Yancey. While some description may be possible, it seems hard to find some characterization that would imply that such graphs contain two adjacent vertices of degree at least four as Theorem~\ref{Main} would require. Nevertheless, such a characterization would be of interest.

On the other hand, Theorem~\ref{Ore4Best} seems interesting in its own right as it incorporates independence number into the bound. In particular, Theorem~\ref{Ore4Best} improves Kostochka and Yancey's bound from Theorem~\ref{Ore4} when $\alpha(D_3(G))\ge |V(G)|/3$. Of course, $4$-Ore graphs satisfy $\alpha(D_3(G))\le \frac{|V(G)|-1}{3}$. This raises the question whether such a result could be true for general $k$. Note that $k$-Ore graphs satisfy $\alpha(D_{k-1}(G))\le \frac{|V(G)|-1}{k-1}$. What happens then for larger values of $\alpha(D_{k-1}(G))$? Could there be an improvement in the edge density for such graphs?  Well, a result of Kierstead and Rabern~\cite{KR} implies an affimative answer and even a linear improvement when $\alpha(G) \ge \frac{2(k-2)}{(k-1)^2} |V(G)|$. Their more general result is the following (cf. 4.5 in ~\cite{KR}):

\begin{thm}
If $G$ is a $k$-critical graph, then 

$$2|E(G)|\ge (k-2)|V(G)| + {\rm mic}(G) + 1$$

where ${\rm mic}(G) = \max\{ \sum_{v\in I} d(v) | I\subseteq V(G) {\rm~independent} \}$.
\end{thm}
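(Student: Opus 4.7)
The plan is to reformulate the statement as $\sum_{v\notin I^*}d(v)\ge (k-2)|V(G)|+1$, where $I^*$ is an independent set achieving ${\rm mic}(G)=\sum_{v\in I^*}d(v)$. If $I^*=\emptyset$, the bound follows immediately from the standard minimum-degree estimate $d(v)\ge k-1$ for all $v\in V(G)$. So I assume $I^*$ is a nonempty proper subset of $V(G)$, which ensures that $G-I^*$ is a proper subgraph of the $k$-critical graph $G$.

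By $k$-criticality, $G-I^*$ admits a proper $(k-1)$-coloring $\phi$. Because $I^*$ is independent, $\phi$ extends to any single vertex $v\in I^*$ with $|\phi(N(v))|<k-1$; since $G$ is not $(k-1)$-colourable, the extension must fail at some $v\in I^*$, forcing $\phi(N(v))=\{1,\ldots,k-1\}$ and in particular $d(v)\ge k-1$. I would then vary $\phi$ via Kempe chain swaps: if $v\in I^*$ has degree exactly $k-1$ and $u_i$ denotes its unique neighbour of colour $i$, then for every pair of colours $i\ne j$ the vertices $u_i$ and $u_j$ must lie in the same $(i,j)$-component of $\phi$; otherwise, swapping colours on the component containing $u_i$ produces another $(k-1)$-coloring of $G-I^*$ under which $v$ sees only $k-2$ distinct colours, enabling an extension and contradicting that $\phi$ was uncolourable at $v$.

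This Kempe-chain structure interacts nicely with the classical Gallai--Brooks theorem: the subgraph $L\subseteq V(G)$ of vertices of degree exactly $k-1$ has every component of $G[L]$ a \emph{Gallai tree} -- a connected graph whose blocks are cliques or odd cycles. I would then invoke the maximality of $I^*$: no vertex $u\in V\setminus I^*$ may be substituted for a subset of its $I^*$-neighbours in a way that strictly increases $\sum_{v\in I^*}d(v)$. Applied componentwise to $G[L]$, this constraint forces enough high-degree vertices to lie outside $I^*$ and limits the independence structure of degree-$(k-1)$ vertices inside $I^*$. A discharging computation, assigning each $v\notin I^*$ the charge $d(v)-(k-2)\ge 1$ and each $v\in L\cap I^*$ a controlled deficit, should then yield $\sum_{v\notin I^*}(d(v)-(k-2))\ge (k-2)|I^*|+1$, which rearranges to the desired bound.

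The main obstacle is bridging the local Kempe-chain and Gallai-tree description to a clean global count. Because $I^*$ is chosen to maximise the \emph{weighted} quantity $\sum_{v\in I}d(v)$ rather than just $|I|$, one cannot directly apply known independence-number bounds inside Gallai trees: one has to trade off vertices of degree $k-1$ contained in $I^*$ against vertices of higher degree outside it. The delicate point is ensuring that this trade-off, driven by $I^*$-maximality, is tight enough to produce the ``$+1$'' slack in the final inequality rather than merely matching the trivial minimum-degree bound.
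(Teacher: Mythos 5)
This theorem is quoted in the paper as an external result of Kierstead and Rabern (their Theorem 4.5); the paper gives no proof of it, so there is nothing internal to compare your argument against. Judged on its own, your proposal has a genuine gap --- in fact two. First, the Kempe-chain step is not valid as stated. The non-$(k-1)$-colorability of $G$ only tells you that \emph{every} $(k-1)$-coloring $\phi$ of $G-I^*$ blocks \emph{some} vertex of $I^*$ (i.e.\ some $v\in I^*$ sees all $k-1$ colors); it does not fix a single vertex $v$ that is blocked under all colorings. So when you perform an $(i,j)$-swap to free a color at $v$, you obtain a new coloring of $G-I^*$ that may block a different vertex $v'\in I^*$, and there is no contradiction. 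This is exactly the difficulty that forces the known proofs to replace the Kempe-chain analysis by a list-coloring reduction: one shows that $G-I^*$ cannot be colored from lists of size $d_{G-I^*}(v)$ minus a deficiency term, and then invokes the Borodin/Erd\H{o}s--Rubin--Taylor characterization of degree-choosable graphs (every non-Gallai-tree component is degree-choosable) together with the maximality inequality $d(u)\le \sum_{w\in N(u)\cap I^*} d(w)$ for $u\notin I^*$, which you never actually extract from the maximality of $I^*$.

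Second, and decisively, the step that carries all the quantitative content --- ``a discharging computation should then yield $\sum_{v\notin I^*}(d(v)-(k-2))\ge (k-2)|I^*|+1$'' --- is not carried out, and you explicitly flag it as the unresolved obstacle. Note that the trivial bound $d(v)\ge k-1$ only gives $\sum_{v\notin I^*}(d(v)-(k-2))\ge |V(G)|-|I^*|$, which falls short of $(k-2)|I^*|+1$ unless $|I^*|$ is small; what is really needed is a lower bound on $2|E(G-I^*)|$ coming from the Gallai-forest structure of the low-degree part of $G-I^*$, and that is precisely the argument your sketch defers. As written, the proposal is a plausible outline of the right circle of ideas, not a proof.
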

  
It is natural then to wonder if an improvement exists for the range $\frac{1}{k-1} \le \frac{\alpha(D_{k-1}(G))}{|V(G)|} \le \frac{2(k-2)}{(k-1)^2}$. Hence we pose the following question:

\begin{question}
Can Kostochka and Yancey's general bound for a $k$-critical graph $G$ be improved when $\alpha(D_{k-1}(G))\ge \frac{|V(G)|}{k-1}$? In particular is there a linear improvement as in Theorem~\ref{Ore4Best}?
\end{question}

\end{document}